\documentclass[11pt]{amsart}
\usepackage[english]{babel}
\usepackage{enumerate}
\usepackage{amsmath,amssymb}

\ifx\pdfpageheight\undefined
   \usepackage[dvips,colorlinks=true,linkcolor=blue,citecolor=red,%
      urlcolor=green]{hyperref}
   \usepackage[dvips]{graphicx}
   \makeatletter
   \edef\Gin@extensions{\Gin@extensions,.mps}
   \DeclareGraphicsRule{.mps}{eps}{*}{}
   \makeatother
\else
 \usepackage[pdftex]{graphicx}
   \usepackage[bookmarksopen=false,pdftex=true,breaklinks=true,%
      backref=page,pagebackref=true,plainpages=false,%
      hyperindex=true,pdfstartview=FitH,colorlinks=true,%
      pdfpagelabels=true,colorlinks=true,linkcolor=blue,%
      citecolor=red,urlcolor=green,hypertexnames=false%
      ]%
   {hyperref}
\fi
\usepackage{bm}
\usepackage{amsmath}
\usepackage{tabularx}
\usepackage{color, colortbl}
\usepackage{url}

\makeindex
%%%%%%%%%% Start TeXmacs macros
\catcode`\<=\active \def<{
\fontencoding{T1}\selectfont\symbol{60}\fontencoding{\encodingdefault}}
\catcode`\>=\active \def>{
\fontencoding{T1}\selectfont\symbol{62}\fontencoding{\encodingdefault}}
\catcode`\|=\active \def|{
\fontencoding{T1}\selectfont\symbol{124}\fontencoding{\encodingdefault}}

\newcommand{\noplus}{}

\newcommand{\tmop}[1]{\ensuremath{\operatorname{#1}}}

\newcommand{\tmstrong}[1]{\textbf{#1}}

\definecolor{LightCyan}{rgb}{0.88,1,1}

\newtheorem{theorem}{Theorem}
\newtheorem{lemma}{Lemma}
\newtheorem{corollary}{Corollary}
\newtheorem{proposition}{Proposition}

\theoremstyle{definition}
\newtheorem{definition}{Definition}
\newtheorem{example}{Example}

\newtheorem{notation}{Notation}

\theoremstyle{remark}
\newtheorem{remark}{Remark}

\newtheoremstyle{break}  % follow `plain` defaults but change HEADSPACE.
  {3pt}   % ABOVESPACE
  {11pt}   % BELOWSPACE
  {\normalfont}  % BODYFONT
  {0pt}       % INDENT (empty value is the same as 0pt)
  {\scshape} % HEADFONT
  {}         % HEADPUNCT
  {4pt}  % HEADSPACE. `plain` default: {5pt plus 1pt minus 1pt}
  {}          % CUSTOM-HEAD-SPEC
\theoremstyle{break}

\newcommand{\ra}{\rangle}
\newcommand{\la}{\langle}

\newcommand{\F}{\mathbb{F}}
\newcommand{\R}{\mathbf{R}}
\newcommand{\C}{\mathbf{C}}
\newcommand{\Q}{\mathbb{Q}}
\newcommand{\PP}{\mathbb{P}}
\newcommand{\ZZ}{\mathrm{Zer}}
\newcommand{\RR}{\mathrm{Reali}}

\newcommand{\Z}{\mathbb{Z}}
\newcommand{\eps}{\varepsilon}

\DeclareMathOperator{\SIGN}{SIGN}
\DeclareMathOperator{\sign}{sign}
\DeclareMathOperator{\Ext}{Ext}
\DeclareMathOperator{\Def}{Def}

\newcommand{\HH}{\mathrm{H}}
\newcommand{\card}{\mathrm{card}}

\newcommand {\hide}[1]{}

\newcommand{\X}{\mathbf{X}}
\newcommand{\x}{\mathbf{x}}
\newcommand{\Y}{\mathbf{Y}}
\newcommand{\y}{\mathbf{y}}
\newcommand{\ZB}{\mathbf{Z}}
\newcommand{\zb}{\mathbf{z}}
\newcommand{\kk}{\mathbf{k}}
\newcommand{\dd}{\mathbf{d}}
\newcommand{\length}{\mathrm{length}}

\newcommand{\symm}{\mathrm{symm}}
\newcommand{\boldsigma}{\boldsymbol{\sigma}}
\newcommand{\Hank}{\mathrm{Hank}}

\newcommand{\boldpi}{\boldsymbol{\pi}}
\newcommand{\boldPi}{\boldsymbol{\Pi}}
\newcommand{\fixed}{\mathrm{fixed}}
\newcommand{\orbit}{\mathrm{orbit}}
\newcommand{\ind}{\mathrm{ind}}
\newcommand{\grad}{\mathrm{grad}}
\newcommand{\Hess}{\mathrm{Hess}}
\newcommand{\spanof}{\mathrm{span}}
\newcommand{\Sym}{\mathrm{Sym}}

\newcommand{\W}{\mathcal{W}}

\begin{document}
\title[Bounding the equivariant Betti numbers]
{Bounding the equivariant Betti numbers  of symmetric semi-algebraic sets}

\author{Saugata Basu}
\address{Department of Mathematics\\
Purdue University, West Lafayette\\
USA}
\email{sbasu@math.purdue.edu
}

\author{Cordian Riener}
\address{Aalto Science Institute\\
Aalto University, Espoo\\
Finland}
\email{cordian.riener@aalto.fi}

\thanks{The first author was partially supported by NSF grants
CCF-1319080 and DMS-1161629.}

\maketitle

\begin{abstract}
Let $\R$  be a real closed field. 
The problem of obtaining tight bounds on the Betti numbers of semi-algebraic subsets of 
$\R^k$ in terms of the number and degrees of the defining polynomials 
has been an important problem in real algebraic geometry with the first results due to
Ole{\u\i}nik and Petrovski{\u\i}, Thom and Milnor. These bounds are all exponential in the
number of variables $k$. Motivated by several applications in
real algebraic geometry, as well as in theoretical computer science, where such bounds have found 
applications, 
we consider in this paper the problem of bounding  
the \emph{equivariant} Betti numbers of 
symmetric algebraic and semi-algebraic subsets of $\R^k$. We obtain several asymptotically
tight upper bounds.  
In particular, we prove that if $S\subset \R^k$ 
is a semi-algebraic subset defined by a finite set of $s$ symmetric polynomials of degree at most $d$, 
then the sum of the $\mathfrak{S}_k$-equivariant Betti numbers of $S$ with coefficients in
$\mathbb{Q}$ is bounded by $(skd)^{O(d)}$.
Unlike the classical bounds on the ordinary
Betti numbers of real algebraic varieties and semi-algebraic sets, the 
above bound is polynomial in $k$ when the degrees of the defining polynomials are bounded by a 
constant.
As an application we improve the best known bound on the ordinary Betti numbers of the projection of a
compact algebraic set improving for any fixed degree the best previously known bound for this problem 
due to Gabrielov, Vorobjov and Zell. 
 \end{abstract}

\tableofcontents
\section{Introduction}
The problem of bounding the Betti numbers of semi-algebraic sets defined over
the real numbers has a long history, and has attracted the attention of many
researchers -- starting from the first results due to 
Ole{\u\i}nik and Petrovski{\u\i}
{\cite{OP}}, followed by Thom {\cite{T}}, Milnor {\cite{Milnor2}}. Aside from
their intrinsic mathematical interest from the point of view of real algebraic
geometry, these bounds have found applications in diverse areas -- most
notably in discrete and computational geometry (see for example
{\cite{BPR10}}), as well as in theoretical computer science
{\cite{Yao94,Pardo96,Bjorner-Lovasz}}. Very recently, studying the probability
distribution of these numbers for randomly chosen real varieties have also
become an important topic of research {\cite{Gayet-Welschinger}}.

In this paper we study the topological complexity of real varieties, as well
as semi-algebraic sets, which have symmetry. We will see  that the
ordinary Betti numbers of symmetric semi-algebraic sets can be
(asymptotically) as large as in the general non-symmetric case. So studying
the growth of Betti numbers of symmetric semi-algebraic sets is not very
interesting on its own. However, for symmetric semi-algebraic sets it is
natural to consider their \emph{equivariant} Betti numbers. The equivariant
Betti numbers (with coefficients in a field of characteristic $0$) equals in
this case the Betti numbers of their orbit spaces -- and here some
interesting structure emerges. For instance, unlike in the non-equivariant
situation the behavior of these equivariant Betti numbers of real and complex
varieties drastically differ from each other. Moreover, in both cases the
higher dimensional equivariant cohomology groups vanish -- and the dimension
of vanishing only depends on the degrees of the polynomials defining the
variety, and is independent of the dimension of the ambient space.
To our knowledge quantitative studies on the topology of symmetric semi-algebraic sets, in particular
obtaining tight bounds on their equivariant Betti numbers, have not been undertaken previously.
We prove asymptotically tight bounds on the equivariant Betti numbers of
symmetric semi-algebraic sets
as well as give an application of our results in a non-equivariant setting.

For the remainder of the paper we fix a real closed field $\R$, and we denote by
$\C$  the algebraic closure of $\R$. 

{\bf Outline of the paper:} The paper is structured as follows. 
In \S \ref{subsec:motivation} we discuss some history and motivation behind studying the problem
of bounding the equivariant Betti numbers of symmetric semi-algebraic sets. In \S \ref{subsec:intro-complexity}
we give a brief introduction  to and overview of known bounds on the Betti numbers of semi-algebraic subsets in $\R^k$ as well as of complex sub-varieties of $\C^k$. In \S \ref{subsec:intro-complexity-symmetric} 
we introduce the basic definitions and certain basic results related to  equivariant (co)homology.
In \S \ref{subsec:intro-comparison}  we highlight some fundamental  differences in the behavior of the equivariant Betti numbers of real as opposed to complex algebraic varieties. 
In \S \ref{sec:main-results} we state the main results of this paper.
We give an outline of the proofs of
the results in \S \ref{subsec:outline}.    

The rest of the paper is devoted to the proofs of these results.
In \S \ref{sec:prelim}, we
recall certain facts from real algebraic geometry and topology that are needed
for the proofs of the main theorems. These include definitions of certain real
closed extensions of the ground field $\R$ consisting of algebraic Puiseux
series with coefficients in $\R$. We also recall some basic inequalities
amongst the Betti numbers which are consequences of the Mayer-Vietoris exact
sequence. In \S \ref{sec:deformation}, we define certain equivariant
deformations of symmetric varieties and prove some topological properties of
these deformations, that mirror similar ones in the non-equivariant case. We prove the 
main theorems in \S \ref{sec:proofs}. 

Finally, we end with some open questions in
\S \ref{sec:conclusion}.

\subsection{Motivation}
\label{subsec:motivation}

There are several different motivations behind studying
the equivariant Betti numbers of symmetric semi-algebraic sets. One motivation
comes from computational complexity theory. It is a well known phenomenon that
the worst case topological complexity of a class of semi-algebraic sets
reflects the \emph{computational hardness} of testing whether a given set
in this class is non-empty, as well as computing topological invariants such
as the Betti numbers of such sets. For instance, it is an
{\textbf{$\tmop{NP}$}}-hard problem (in the Blum-Shub-Smale model) to decide
if a given real algebraic variety $V \subset \R^{k}$ defined by one polynomial
equation of degree at most $4$ is empty or not {\cite{BSS89}}. The Betti
numbers of such varieties can be exponentially large in $k$. In contrast, the
same problem of deciding emptiness, as well as computing other topological
invariants of real varieties defined by a fixed number of quadrics in $\R^{k}$
can be solved with polynomial complexity \cite{Bar93,BP'R07joa}.
(Note that while a real variety defined by any number of at most quadratic equations can obviously be defined by a single polynomial equation of degree $\leq 4$ by taking a sum of squares, not all quartic polynomials
in $k$ variables can be written as a sum of squares of some constant number of quadratic polynomials as
$k\rightarrow \infty$, and thus the last statement does not contradict the previous one.)
The Betti numbers of such sets can also be bounded by a polynomial function of
$k$ \cite{Bar97,BP'R07jems}. This close connection between the
worst case upper bound on the Betti numbers, and the algorithmic complexity of
computing topological invariants, breaks down if one considers the class of
``symmetric'' real varieties. On one hand the topological complexity in terms
of the Betti numbers of such sets can be as big as in the non-symmetric
situation (see Example \ref{ex:zero-dimensional}). On the other hand,
there exist algorithms whose complexity depend polynomially in the number of
variables (for fixed degrees) for testing emptiness of such sets
\cite{Timofte03,Riener}. This dichotomy suggests that perhaps the
topological complexity of symmetric varieties, and semi-algebraic sets is
better reflected by their equivariant Betti numbers rather than the ordinary
ones. The results of the current paper (which show that the equivariant Betti
numbers of real varieties and semi-algebraic sets are polynomially bounded for
fixed degrees) agree with this intuition. We also note that studying the 
computational complexity of symmetric vs. non-symmetric versions of problems in linear algebra
and algebraic geometry
is an active field of research -- see for example \cite{Hillar} for several results
of this kind for computational problems involving high-dimensional tensors. 

Our second motivation is more concrete and leads to an improvement in certain
situations of an important result proved by Gabrielov, Vorobjov and Zell
{\cite{GVZ04}} who proved a bound on the ordinary Betti numbers of the image
under projection of a semi-algebraic set, in terms of the number and degrees
of polynomials defining the original set. The bound is obtained by bounding
the dimensions of certain groups occurring as the $E_{1}$-term of a certain
spectral sequence. It turns out that there is an action of the symmetric group
on this spectral sequence, and quotienting out this action yields a better
approximation to the homology groups of the image than the original spectral
sequence. Our bound on the equivariant Betti numbers can now be used to bound
the dimension of this quotient object. We explain this consequence of our
results in \S \ref{subsec:non-equivariant}.

Before proceeding further we first fix some notation and recall some
classical tight upper bounds on the Betti numbers of general (i.e. not
necessarily symmetric) real (respectively complex)  varieties, in terms of the
degrees of the defining polynomials and the dimension of the ambient space.
Obtaining such bounds has been an important area of research in quantitative real (respectively complex)
algebraic geometry.

\subsection{Topological complexity of complex varieties and real
semi-algebraic sets}
\label{subsec:intro-complexity}
\begin{notation}
  For $P \in \R [X_{1} , \ldots ,X_{k} ]$ (respectively $P \in \C [ X_{1} ,
  \ldots ,X_{k} ]$) we denote by $\ZZ (P, \R^{k})$ (respectively
  $\ZZ (P, \C^{k})$) the set of zeros of $P$ in
  $\R^{k}$(respectively $\C^{k}$). More generally, for any finite set
  $\mathcal{P} \subset \R [ X_{1} , \ldots ,X_{k} ]$ (respectively
  $\mathcal{P} \subset \C [ X_{1} , \ldots ,X_{k} ]$), we denote by $\ZZ
  (\mathcal{P}, \R^{k})$ (respectively $\ZZ(\mathcal{P},
  \C^{k})$) the set of common zeros of $\mathcal{P}$ in
  $\R^{k}$(respectively $\C^{k}$). 
\end{notation}

\begin{notation}
  \label{not:sign-condition} For any finite family of polynomials $\mathcal{P}
  \subset \R [ X_{1} , \ldots ,X_{k} ]$, we call an element $\sigma \in \{
  0,1,-1 \}^{\mathcal{P}}$, a \emph{sign condition} on $\mathcal{P}$. For
  any semi-algebraic set $Z \subset \R^{k}$, and a sign condition $\sigma \in
  \{ 0,1,-1 \}^{\mathcal{P}}$, we denote by $\RR (\sigma ,Z)$ the
  semi-algebraic set defined by 
  \[
  \left\{ \x \in Z \mid \sign (P (\x)) = \sigma (P)  ,P \in \mathcal{P} \right\},
  \]
  and call it the
  \emph{realization} of $\sigma$ on $Z$. 
  More generally, we call any
  Boolean formula $\Phi$ with atoms, 
  $P \;\sim\; 0, P \in \mathcal{P}$ where $\sim$ is one of $=,>,$ or $<$, 
  to be a \emph{$\mathcal{P}$-formula}. We call the realization of $\Phi$,
  namely the semi-algebraic set
  \begin{eqnarray*}
    \RR (\Phi , \R^{k}) & = & \{ \x \in \R^{k} \mid
    \Phi (\x) \}
  \end{eqnarray*}
  a \emph{$\mathcal{P}$-semi-algebraic set}. Finally, we call a Boolean
  formula without negations, and with atoms 
  $P \;\sim\; 0, P\in \mathcal{P}$ where $\sim$ is one of  $\leq,\geq$, 
  to be a \emph{$\mathcal{P}$-closed formula}, and we call
  the realization, 
  $\RR(\Phi , \R^{k})$, a \emph{$\mathcal{P}$-closed semi-algebraic set}.
\end{notation}

\begin{notation}
  For any semi-algebraic set or a complex variety $X$, and a field of
  coefficients $\F$, we will denote by $\HH^{i} (X,\F)$ the
  \emph{$i$-th cohomology group} of $X$ with coefficients in
  $\F$, by $b_{i} (X,\F) =  \dim_{\F}  \HH^{i} (X,\F)$, 
  and by $b(X,\F) = \sum_{i \geq 0}b_i(X,\F)$. Note that defining the cohomology groups of
  semi-algebraic sets over arbitrary (possibly non-archimedean) real closed
  fields requires some care, and we refer the reader to 
  \cite[Chapter 6]{BPRbook2} for details. Roughly speaking, for a closed and bounded
  semi-algebraic set $S$, $\HH^{i} (S,\F)$ is defined as the $i$-th
  simplicial cohomology group associated to a semi-algebraic triangulation of
  $S$.  For a general semi-algebraic set $S$, $\HH^{i} (S,\F)$ is
  defined as the $i$-th cohomology group of a closed and bounded
  semi-algebraic replacement of $S$, which is semi-algebraically homotopy
  equivalent to it. This definition is clearly invariant under semi-algebraic
  homotopy equivalences, and coincides with ordinary singular cohomology
  groups for semi-algebraic sets defined over $\mathbb{R}$.
\end{notation}

The following classical result, which gives an upper bound on the Betti
numbers of a real variety in terms of the degree of the defining polynomial
and the number of variables, is due to 
Ole{\u\i}nik and Petrovski{\u\i} \cite{OP},
Thom \cite{T} and Milnor \cite{Milnor2}.

\begin{theorem}
  \label{thm:betti-bound-algebraic}{\cite{OP,T,Milnor2}} Let
  $Q  \in \R [ X_{1} , \ldots ,X_{k} ]$ be a polynomial with $\deg (Q)  
  \leq  d$. Then, for any field of coefficients $\F,$
  \begin{eqnarray*}
    b(\ZZ (Q, \R^{k}) ,\F) & \leq & d (2d-1)^{k-1} .
  \end{eqnarray*}
\end{theorem}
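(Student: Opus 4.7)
The plan is to follow the classical Morse-theoretic approach of Milnor. The proof has three main steps: (i) reduce to the case of a smooth bounded real algebraic hypersurface by a deformation/perturbation, (ii) apply the weak Morse inequalities to bound the sum of Betti numbers by the number of critical points of a generic linear function, and (iii) invoke B\'ezout's theorem to bound that count in terms of $d$ and $k$.

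For the reduction in step (i), I would pass to a real closed extension $\R\langle\eps\rangle$ and consider the perturbation $\tilde Q = Q^{2} + \eps\bigl(X_{1}^{2d} + \cdots + X_{k}^{2d}\bigr) - \delta$ for a second infinitesimal $\delta$, chosen so that $\ZZ(\tilde Q, \R\langle\eps,\delta\rangle^{k})$ is a smooth compact hypersurface of degree at most $2d$. Compactness comes from the coercive term $\eps\sum X_{i}^{2d}$, smoothness from the genericity of $\delta$ (via Sard), and the deformation retracts onto a tubular neighborhood of $\ZZ(Q,\R^{k})$. A standard semi-algebraic continuity/transfer argument (base change to an extension does not decrease Betti numbers of bounded semi-algebraic sets) shows that $\sum_{i} b_{i}\bigl(\ZZ(Q,\R^{k}),\F\bigr)$ is dominated by the analogous sum for the smooth perturbed variety, up to a factor of $2$ coming from the fact that near $\{Q=0\}$ the level set $\{\tilde Q = 0\}$ behaves as a double sheet $\{Q \approx \pm\sqrt\delta\}$.

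For step (ii), since $V = \ZZ(\tilde Q)$ is a smooth compact hypersurface, a generic linear form $\ell$ restricts to a Morse function on $V$. The weak Morse inequalities then give
\begin{eqnarray*}
\sum_{i\ge 0} b_{i}(V,\F) & \leq & \#\bigl\{\text{critical points of }\ell|_{V}\bigr\},
\end{eqnarray*}
where the critical points are exactly those $x \in V$ at which $\nabla \tilde Q(x)$ is parallel to $\nabla\ell$. For step (iii), after choosing coordinates so that $\ell = X_{1}$, these critical points are the zeros of the system
\begin{eqnarray*}
\tilde Q \;=\; 0, \qquad \frac{\partial \tilde Q}{\partial X_{2}} \;=\; 0, \quad \ldots, \quad \frac{\partial \tilde Q}{\partial X_{k}} \;=\; 0,
\end{eqnarray*}
consisting of one equation of degree $\leq 2d$ and $k-1$ equations of degree $\leq 2d-1$. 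B\'ezout's theorem bounds the number of (isolated) complex solutions by $2d(2d-1)^{k-1}$; combined with the factor $\tfrac{1}{2}$ from the double-sheet structure, this yields the advertised bound $d(2d-1)^{k-1}$.

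The main obstacle is making the perturbation and transfer arguments of step (i) rigorous over an arbitrary real closed field $\R$ rather than $\mathbb{R}$. One needs to work carefully in $\R\langle\eps,\delta\rangle$, verify that the perturbed zero set is genuinely smooth and compact, that Betti numbers with coefficients in $\F$ are preserved under the relevant extensions and deformation retracts, and that genericity of $\ell$ and of $\delta$ can be established algebraically (e.g., via a generic choice within the Zariski open set where the Jacobian conditions hold). Each of these is standard in the semi-algebraic/o-minimal framework, but the bookkeeping to recover precisely the factor $d(2d-1)^{k-1}$ (rather than, say, a loose $2d(2d-1)^{k-1}$) is the delicate part.
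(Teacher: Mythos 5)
You have reproduced, in outline, the classical Ole{\u\i}nik--Petrovski{\u\i}--Thom--Milnor argument, which is exactly what the paper cites for this theorem (it gives no proof of its own); it is also the argument that Section \ref{sec:deformation} of the paper adapts equivariantly (the deformation of Notation \ref{not:def}, the nondegeneracy statement of Proposition \ref{prop:non-degenerate}, the Morse Lemmas \ref{lem:equivariant_morseA} and \ref{lem:equivariant_morseB} producing the factor $\tfrac12$, and then Bezout). So the route is right, but two steps do not work as written.

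First, the infinitesimals in your perturbation are ordered the wrong way. In $\R\langle\eps,\delta\rangle$ one has $0<\delta\ll\eps$, so $\{Q^{2}+\eps\sum_i X_i^{2d}\le\delta\}$ is contained in $\{\sum_i X_i^{2d}\le\delta/\eps\}$, a ball of infinitesimal radius about the origin; this set does not contain (an extension of) $\ZZ(Q,\R^{k})$ and cannot retract onto it. You need the subtracted constant to be infinitely large compared with the coercive coefficient (equivalently, reverse the roles of $\eps$ and $\delta$), and you need the conic structure of $\ZZ(Q,\R^{k})$ at infinity to identify its Betti numbers with those of the truncated variety before invoking the retraction (compare Proposition \ref{prop:alg-to-semialg} and the $P_1,P_2$ device in the proof of Theorem \ref{thm:main}).

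Second, the halving step is the crux of getting $d(2d-1)^{k-1}$ rather than $2d(2d-1)^{k-1}$, and the ``double sheet'' justification is only a heuristic: near the singular points of $\ZZ(Q,\R^{k})$ the smooth level hypersurface $W=\ZZ(\tilde{Q})$ is not a two-sheeted cover of the variety, and even where it is, a double cover of a singular set does not by itself give $b(\ZZ(Q,\R^k),\F)\le\tfrac12\,b(W,\F)$. There are two standard ways to make this rigorous: (a) bound $b(\ZZ(Q,\R^k),\F)\le b(S,\F)$ where $S=\{\tilde{Q}\le 0\}$ is the compact domain bounded by $W$, and then prove $b(S,\F)\le\tfrac12\,b(W,\F)$ via Mayer--Vietoris for the decomposition of the sphere into $S$ and the closure of its complement together with Alexander duality (with field coefficients this in fact gives $b(W,\F)=2\,b(S,\F)$); or (b) bypass $W$ and run Morse theory on the manifold-with-boundary $S$ directly: a critical point of the linear form on $W$ changes the homotopy type of the sublevel sets of $S$ only when the gradient of $\tilde{Q}$ has the appropriate sign there, and at most half of the at most $2d(2d-1)^{k-1}$ critical points do, which is precisely the mechanism of Proposition \ref{prop:betti_bound} in the equivariant setting. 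Either repair closes the gap; as written the factor $2$ is asserted, not proved. Finally, for the Bezout count you must also arrange (by genericity of the linear form, or by a special choice of the deformation as in Proposition \ref{prop:non-degenerate}) that the critical points are nondegenerate, hence isolated solutions of the complexified system, before the product-of-degrees bound applies.
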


By separating the real and imaginary parts of complex polynomials and taking
their sums of squares, one obtains as an immediate corollary:

\begin{corollary}
  \label{cor:betti-bound-algebraic-complex}Let $\mathcal{Q}  \subset \C [
  X_{1} , \ldots ,X_{k} ]$ be a finite set of polynomials with $\deg (Q)  
  \leq  d  ,Q \in \mathcal{Q}$. Then, for any field of coefficients
  $\F,$
  \begin{eqnarray*}
    b(\ZZ (\mathcal{Q}, \C^{k})
    ,\F) & \leq & 2d (4d-1)^{2k-1} .
  \end{eqnarray*}
\end{corollary}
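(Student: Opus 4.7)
The plan is to reduce the complex statement to the real statement of Theorem \ref{thm:betti-bound-algebraic} via the standard identification $\C^k \cong \R^{2k}$ together with a sum-of-squares trick. First I would introduce $2k$ real variables $Y_1,\ldots,Y_k,Z_1,\ldots,Z_k$ and substitute $X_j = Y_j + \sqrt{-1}\,Z_j$ in each $Q \in \mathcal{Q}$. Splitting into real and imaginary parts gives $Q = Q_R + \sqrt{-1}\,Q_I$, with $Q_R, Q_I \in \R[Y_1,\ldots,Y_k,Z_1,\ldots,Z_k]$ each of degree at most $d$. Under the homeomorphism $\C^k \to \R^{2k}$, a point is a common zero of $\mathcal{Q}$ in $\C^k$ if and only if it is a common zero in $\R^{2k}$ of the enlarged family $\{Q_R, Q_I \mid Q \in \mathcal{Q}\}$.

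Next I would package this into a single real polynomial by forming
\[
P \;=\; \sum_{Q \in \mathcal{Q}} \left( Q_R^2 + Q_I^2 \right) \;\in\; \R[Y_1,\ldots,Y_k,Z_1,\ldots,Z_k].
\]
Because the summands are real and non-negative, the zero set $\ZZ(P,\R^{2k})$ coincides with the common zero set of all the $Q_R, Q_I$, hence is homeomorphic (as a topological space) to $\ZZ(\mathcal{Q},\C^k)$. By construction $\deg(P) \leq 2d$.

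Finally I would apply Theorem \ref{thm:betti-bound-algebraic} to $P$ in $2k$ variables, yielding
\[
\sum_{i \geq 0} b_i\!\left(\ZZ(P,\R^{2k}),\F\right) \;\leq\; 2d\,(4d-1)^{2k-1},
\]
and transport this bound along the homeomorphism above to conclude. There is essentially no obstacle here beyond bookkeeping; the only point that deserves a sentence of justification is that the identification $\C^k \cong \R^{2k}$ is a homeomorphism respecting the zero-set structure, so that Betti numbers are preserved. The estimate is slightly wasteful (we double the degree once by forming squares, and the exponent $2k-1$ comes from the ambient real dimension $2k$), which is why the constants $2d$ and $4d-1$ appear instead of $d$ and $2d-1$.
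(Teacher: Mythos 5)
Your argument is exactly the paper's: the corollary is obtained there by separating real and imaginary parts, summing the squares, and applying Theorem \ref{thm:betti-bound-algebraic} to the resulting polynomial of degree at most $2d$ in $2k$ real variables. The proposal is correct and matches the intended proof.
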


In the semi-algebraic case, we have the following bounds. 

\begin{theorem}
  \label{thm:betti-bound-sa}{\cite{Milnor2}} Let $S \subset \R^{k}$ be a 
  basic closed semi-algebraic set
  (i.e. a semi-algebraic set defined by a finite conjunction of weak polynomial inequalities)
   defined by $P_{1} \geq 0, \ldots ,P_{s} \geq 0$,
  and the degree of each $P_{i}$ is bounded by $d$. Then, for any field of
  coefficients $\F,$
  \begin{eqnarray*}
    b(S,\F) & \leq & sd (2sd-1)^{k-1} .
  \end{eqnarray*}
\end{theorem}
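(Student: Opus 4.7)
The plan is to bound $\sum_i b_i(S, \F)$ by applying Theorem \ref{thm:betti-bound-algebraic} to the single polynomial $Q = \prod_{i=1}^s P_i$, which has degree at most $sd$.

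First, I would reduce to a compact, non-singular setting. Intersecting $S$ with a closed ball of sufficiently large radius (handled via infinitesimal perturbations in the non-archimedean case) yields a compact semi-algebraic set homotopy equivalent to $S$, so the Betti numbers are unchanged. A generic small perturbation of each $P_i$ further ensures that the hypersurfaces $\{P_i = 0\}$ are non-singular and meet transversally, so that $S$ becomes a compact manifold with corners whose boundary is contained in $\ZZ(Q, \R^k) = \bigcup_{i=1}^s \{P_i = 0\}$.

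Next, I would bound $\sum_i b_i(S, \F)$ via Morse theory for manifolds with corners. For a generic linear form $\phi$ on $\R^k$, the Morse inequalities give
$$\sum_i b_i(S, \F) \;\leq\; \#\{\text{critical points of } \phi|_S\},$$
where each critical point lies on a stratum $\partial_I S = S \cap \bigcap_{i \in I} \{P_i = 0\}$ for some nonempty $I \subset \{1,\ldots,s\}$, and is characterized by the equations $P_i = 0$ for $i \in I$ together with Lagrange-multiplier conditions involving $\nabla \phi$ and the $\nabla P_i$. Bezout-type estimates on these Jacobian systems, together with the fact that the union of the stratum-defining equations is cut out by the single polynomial $Q$ of degree at most $sd$, yield a total count bounded by $sd(2sd-1)^{k-1}$, which is precisely the bound supplied by Theorem \ref{thm:betti-bound-algebraic} applied to $Q$.

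The main obstacle will be the bookkeeping across strata of the corner manifold $S$: one must show that the sum of critical-point contributions from all the $\partial_I S$ is absorbed into the single global bound coming from the degree-$sd$ variety $\ZZ(Q, \R^k)$. An equivalent route, which sidesteps the Morse-theoretic accounting, is to establish directly the inequality $\sum_i b_i(S, \F) \leq \sum_i b_i(\ZZ(Q, \R^k), \F)$ via a Mayer--Vietoris argument over the realizations of sign conditions on $\{P_1, \ldots, P_s\}$, after which Theorem \ref{thm:betti-bound-algebraic} supplies the final estimate.
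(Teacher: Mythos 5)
The paper itself gives no proof of this statement, it is quoted from Milnor, so your sketch has to stand on its own, and it has a genuine gap at its quantitative core. In your main route, the whole content of the theorem is hidden in the sentence claiming that the stratum-by-stratum critical points are ``absorbed into the single global bound coming from the degree-$sd$ variety $\ZZ(Q,\R^k)$'': Bezout applied to the Lagrange system on a corner stratum $S\cap\bigcap_{i\in I}\{P_i=0\}$ with $\card(I)=j$ gives on the order of $d^{k+j}$ solutions, and summing $\binom{s}{j}d^{k+j}$ over all strata is not bounded by $sd(2sd-1)^{k-1}$ (for fixed $k$ and large $d$ it exceeds it); making the stratified count fit under the stated constant would require genuinely sharper polar-variety degree bounds plus a nontrivial combinatorial summation, none of which is supplied, so this is not mere bookkeeping. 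The fallback you propose does not repair this: the inequality $\sum_i b_i(S,\F)\le\sum_i b_i(\ZZ(Q,\R^k),\F)$ is false in general --- take $s=1$ and $P_1=1+X_1^2+\cdots+X_k^2$, so that $S=\R^k$ while $\ZZ(Q,\R^k)=\emptyset$ --- and Mayer--Vietoris over sign conditions leads to bounds with binomial factors in $s$, of the shape of Theorem \ref{thm:betti-bound-sa-general}, not to that inequality.

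The classical argument that produces exactly this constant avoids the corner stratification altogether. After relaxing each condition $P_i\ge 0$ to $P_i\ge-\eta$ for an infinitesimal $\eta$ (this leaves the homotopy type unchanged and puts the boundary hypersurfaces in general position; compare Lemma \ref{lem:gen-pos2-with-parameters}), one forms the single polynomial $H=\prod_{i=1}^{s}(P_i+\eta)$ of degree at most $sd$ and applies Morse theory to the nonsingular level hypersurface $\{H^2=\eps\}$ for a regular value $\eps$, whose relevant components bound a region retracting onto the relaxed set. A generic linear form has at most $2sd(2sd-1)^{k-1}$ critical points on this degree-$2sd$ hypersurface by Bezout, and at most half of them can change the topology of the sublevel sets of the bounded region, which gives $sd(2sd-1)^{k-1}$; the unbounded case reduces to this by intersecting with a large ball. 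This single-hypersurface Morse count is precisely the scheme the present paper adapts equivariantly via $\Def(P,\zeta,d)$ and Proposition \ref{prop:betti_bound} (note the factor $\frac{1}{2}$ there), so a correct non-equivariant proof can follow that template with the product polynomial in place of $Q$.
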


\begin{theorem}
  \label{thm:betti-bound-sa-general}{\cite{BPRbook2,GV07}} Let
  $\mathcal{P} \subset \R [ X_{1} , \ldots ,X_{k} ]$ be a finite family of
  polynomials with $\deg (P) \leq d $ for each $P \in \mathcal{P}$, and
  $\card (\mathcal{P}) =s$. Let $S$ be a
  $\mathcal{P}$-closed semi-algebraic set. Then, for any field of coefficients
  $\F,$
  \begin{eqnarray*}
    b(S,\F) & \leq & \sum_{i=0}^{k}
    \sum_{j=1}^{k-i} \binom{s+1}{j} 6^{j} d (2d-1)^{k-1} .
  \end{eqnarray*}
  If $T$ is a $\mathcal{P}$-semi-algebraic set then, for any field of
  coefficients $\F,$
  \begin{eqnarray*}
    b(T,\F) & \leq & \sum_{i=0}^{k}
    \sum_{j=1}^{k-i} \binom{2s^{2} +1}{j} 6^{j} d (2d-1)^{k-1} .
  \end{eqnarray*}
\end{theorem}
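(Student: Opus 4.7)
The plan is to first prove the $\mathcal{P}$-closed case by combining a Mayer-Vietoris spectral sequence with Theorem \ref{thm:betti-bound-algebraic}, and then to reduce the general $\mathcal{P}$-semi-algebraic case to the closed one by an infinitesimal thickening.

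After reducing to the compact case by intersecting with an infinitesimally large closed ball (which preserves the semi-algebraic homotopy type and only costs $O(1)$ additional polynomials), I would treat the $\mathcal{P}$-closed case as follows. Using a generic infinitesimal perturbation of $\mathcal{P}$ I would produce a homotopy-equivalent replacement of $S$ that is covered by finitely many closed basic pieces indexed by sign conditions. For each nonempty $J \subset \mathcal{P}$ with $|J|=j$ and each choice of one atom per polynomial in $J$ from the six-element set $\{>,<,=,\geq,\leq,\neq\}$ (this gives the factor $6^j$), the associated joint realization $\RR(\sigma_{J,\tau},\R^k)$ is a basic semi-algebraic set defined by at most $j$ atoms in polynomials of degree $\leq d$. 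The cohomological Mayer-Vietoris spectral sequence attached to this cover has
\[
E_1^{p,q} \;=\; \bigoplus_{\substack{J \subset \mathcal{P},\,|J|=p+1 \\ \tau}} H^q(\RR(\sigma_{J,\tau},\R^k),\F) \;\Longrightarrow\; H^{p+q}(S,\F),
\]
so by convergence $b_n(S,\F) \leq \sum_{p+q=n} \dim_{\F} E_1^{p,q}$.

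The core estimate is then $\sum_q b_q(\RR(\sigma_{J,\tau},\R^k),\F) \leq d(2d-1)^{k-1}$ for each basic piece, which follows by the same Morse-theoretic deformation argument that underlies the proof of Theorem \ref{thm:betti-bound-algebraic}: after a generic deformation the realization is semi-algebraically homotopy equivalent to a smooth real algebraic hypersurface of degree at most $d$ in $\R^k$. Since $\RR(\sigma_{J,\tau},\R^k) \subset \R^k$ has dimension $\leq k$, the terms $E_1^{p,q}$ vanish for $q \geq k$, which enforces the cutoff $j = p+1 \leq k-i$ in the outer sum with $n = i$. Summing over $p,q$ yields the claimed bound for the $\mathcal{P}$-closed case. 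For a general $\mathcal{P}$-semi-algebraic set $T$, I would construct a homotopy-equivalent $\mathcal{P}'$-closed replacement $T'$, where $\mathcal{P}'$ is obtained from $\mathcal{P}$ by introducing, for each ordered pair $(P_i,P_j)$, infinitesimally perturbed polynomials $P_i \pm \eps_{i,j}$ that allow each strict atom to be rewritten as a non-strict one while keeping distinct sign conditions separated; this construction yields $|\mathcal{P}'| \leq 2s^2 + 1$, and applying the closed-case bound to $T'$ produces the second stated estimate.

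The main technical obstacle is verifying that the infinitesimal replacements in both reductions really are homotopy equivalent to the original sets and that the resulting cover meets the hypotheses of the Mayer-Vietoris spectral sequence. This requires a careful transversality argument along the lines of \cite{BPRbook2}: one must choose a family of perturbations that is simultaneously ``Morse-like'' with respect to all the atoms in play, so that the critical sets of the associated Morse functions on each piece of the cover remain under control. Once this is granted, the rest of the proof reduces to combinatorial bookkeeping inside the spectral sequence.
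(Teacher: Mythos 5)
First, note that the paper does not prove this statement at all: it is quoted from \cite{BPRbook2,GV07}, and the closest in-paper model for its proof is the equivariant analogue, Theorem \ref{thm:main-sa-closed} (via Propositions \ref{7:prop:closed-with-parameters} and \ref{7:prop:betti closed} and Lemmas \ref{7:lem:union2}, \ref{7:lem:union1}). Measured against that scheme, your overall architecture (infinitesimal perturbation, decomposition into closed pieces, Mayer--Vietoris bookkeeping, a per-piece algebraic bound, and a Gabrielov--Vorobjov closed replacement for the non-closed case) is the right one, but the crucial per-piece estimate is where your argument breaks. You claim that each basic piece $\RR(\sigma_{J,\tau},\R^k)$, defined by up to $j$ atoms of degree $\leq d$, satisfies $\sum_q b_q \leq d(2d-1)^{k-1}$ because ``after a generic deformation'' it is homotopy equivalent to a smooth hypersurface of degree at most $d$. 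This is not true as stated: for basic closed sets defined by several inequalities the correct classical bound depends on the number of polynomials (compare Theorem \ref{thm:betti-bound-sa}, which gives $sd(2sd-1)^{k-1}$), and even for algebraic sets defined by several degree-$d$ equations the reduction to a single equation passes through a sum of squares of degree $2d$, so the shape $d(2d-1)^{k-1}$ requires the separate Ole{\u\i}nik--Petrovski{\u\i}/Milnor argument (with its halving of critical points), not a degree-$d$ hypersurface. In the actual proof the summands that get bounded by $d(2d-1)^{k-1}$ are not arbitrary basic pieces: the five closed conditions $P_i=0$, $P_i=\pm\delta_i$, $P_i\geq 2\delta_i$, $P_i\leq -2\delta_i$ are arranged so that, after the union/intersection manipulations of Lemmas \ref{7:lem:union2} and \ref{7:lem:union1} and Proposition \ref{7:prop:prop1}, the only sets whose Betti numbers are invoked are intersections of the level hypersurfaces $P_i=c\,\delta_i$, i.e.\ real algebraic sets defined by (possibly many) degree-$d$ equations; this is also where the factor $6^j$ really comes from, as $\sum_{p\leq j}\binom{j}{p}5^p\leq 6^j$ with five closed conditions per polynomial, not from the six symbols $\{>,<,=,\geq,\leq,\neq\}$ (strict or $\neq$ atoms would produce non-closed pieces and invalidate the closed Mayer--Vietoris machinery).

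Two further points. Your indexing of the cover spectral sequence does not produce the stated cutoff: with $n=p+q=i$, the vanishing of $E_1^{p,q}$ for $q>k$ bounds $p$ from below, not above; the cutoff $j\leq k-i$ in the theorem comes from the other Mayer--Vietoris inequality (part (b) of Proposition \ref{7:prop:prop1}), which bounds $b_i$ of an intersection by Betti numbers $b_{i+j-1}$ of unions and uses vanishing above the ambient dimension. Finally, for the second assertion your guess that $\mathcal{P}'$ has cardinality $2s^2+1$ via polynomials $P_i\pm\eps_{i,j}$ is plausible in shape but unproved; the homotopy equivalence of the Gabrielov--Vorobjov replacement is exactly the nontrivial content there (the paper's own Theorem \ref{thm:main-sa} uses this construction with $2(k+1)s$ polynomials), so deferring it as ``a careful transversality argument'' leaves the second bound unestablished as well.
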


We refer the reader to {\cite{BPR10}} for a survey of other known results in
this direction. Even though the bounds in the case of real varieties often
differ in important respects,
the upper bounds on the Betti numbers in both the real and
complex case share the feature that they depend exponentially in the dimension
of the ambient space, and if the dimension of the ambient space is fixed, of
being polynomial in the degrees of the defining polynomials.

\subsection{Topological complexity of symmetric varieties}
\label{subsec:intro-complexity-symmetric}
Another area of
research with a long history is the action of groups on varieties. Suppose $G$
is a compact group acting on a real or complex variety $V$. If the action is
sufficiently nice then the space of orbits is again a variety in the complex
case and a semi-algebraic set in the real case. Studying the topology of such
orbit spaces is a very natural and well studied problem. We approach it in
this paper from a quantitative point of view, and consider the problem of
proving tight upper bounds on the Betti numbers of the orbit space in terms of
the degrees of the defining polynomials of $V$. In this paper we study
exclusively the orbit spaces of the symmetric group, $\mathfrak{S}_{k}$, or
products of symmetric groups, acting in the standard way on finite dimensional
real or complex vector spaces by permuting coordinates. These orbit spaces
were described (semi-)algebraically in the fundamental papers of Procesi \cite{Procesi78}, and 
Procesi and Schwarz {\cite{Procesi-Schwarz}}.
Subsequently, symmetric group actions in the context of real algebraic
geometry and optimization were studied by several authors (see for example
{\cite{Riener,Timofte03,Timofte05,Timofte05-2,Kuhlmann2012,Blekherman-Riener}}).
We will see that the behavior in terms of topological complexity of the real
and complex orbit spaces differ substantially (unlike in the non-symmetric
situation discussed above).

\begin{notation}
\label{not:multisymmetric-polynomial}
Let $\mathbf{k}= (k_{1} , \ldots ,k_{\omega}) \in \Z_{>0}^{\omega}$, with $k= \sum_{i=1}^{\omega} k_{i}$.
For $P \in \R[\X^{(1)},\ldots,\X^{(\omega)}]$ (resp. $P \in \C[\X^{(1)},\ldots,\X^{(\omega)}]$)
where each  $\X^{(i)}$ is a block of $k_i$ variables.

For $\dd = (d_1,\ldots,d_\omega) \in \Z_{\geq 0}^\omega$, we will denote by 
$\R[\X^{(1)},\ldots,\X^{(\omega)}]_{\leq \dd}$ (resp. $\C[\X^{(1)},\ldots,\X^{(\omega)}]_{\leq \dd}$) denote the set of polynomials
whose degree in $\X^{(i)}$ is bounded by $d_i$ for $1 \leq i \leq \omega$.

We will denote by $\R[\X^{(1)},\ldots,\X^{(\omega)}]^{\mathfrak{S}_{\kk}}$ (resp. $\C[\X^{(1)},\ldots,\X^{(\omega)}]^{\mathfrak{S}_\kk}$)
the set of polynomials which are  fixed under the action of $\mathfrak{S}_{\mathbf{k}}
  =\mathfrak{S}_{k_{1}} \times \cdots \times \mathfrak{S}_{k_{\omega}}$
 acting by independently permuting each block of variables $\X^{(i)}$.
\end{notation}

\begin{notation}
\label{not:multisymmetric-orbit}
  Let $\mathbf{k}= (k_{1} , \ldots ,k_{\omega}) \in
  \Z_{>0}^{\omega}$, with $k= \sum_{i=1}^{\omega} k_{i}$,  and let $X$ be a
  semi-algebraic subset of $\R^{k}$ or a constructible subset of $\C^{k}$,
  such that the product of symmetric groups $\mathfrak{S}_{\mathbf{k}}
  =\mathfrak{S}_{k_{1}} \times \cdots \times \mathfrak{S}_{k_{\omega}}$ act on
  $X$ by independently permuting each block of coordinates. We will denote by
  $X/\mathfrak{S}_{\mathbf{k}}$ the {{\em orbit space\/}} of this action. If
  $\omega =1$, then $k=k_{1}$, and we will denote $\mathfrak{S}_{\mathbf{k}}$
  simply by $\mathfrak{S}_{k}$.
\end{notation}

We recall first the definition of {{\em equivariant cohomology groups\/}} of a
$G$-space for an arbitrary compact Lie group $G$. For $G$ any compact Lie
group, there exists a {{\em universal principal $G$-space\/}}, denoted $E G$,
which is contractible, and on which the group $G$ acts freely on the right. \
The {{\em classifying space\/}} $B G$, is the orbit space of this action, i.e.
$B G= E G/G$.

\begin{definition}
  \label{def:equivariant-cohomology} (Borel construction) Let $X$ be a space
  on which the group $G$ acts on the left (henceforth a $G$-space). Then, $G$ acts diagonally on the
  space $E G \times X$ by $g (z,x) = (z \cdot g^{-1} ,g \cdot x)$. For any
  field of coefficients $\F$, the {{\em  $G$-equivariant cohomology
  groups of $X$ \/}}with coefficients in $\F$, denoted by
  $\HH^{\ast}_{G} (X,\F)$, is defined by
$\HH^{\ast}_{G} (X,\F)  =  \HH^{\ast} (E G \times X/G,\F)$.
\end{definition}

For any $G$-space $X$, there exists a spectral sequence \cite[\S VII.7 (7.2)]{Brown-book} abutting to $\HH^*_G(X,\F)$ whose $E_2$-term is given
by 
\begin{eqnarray*}
\label{eqn:spectral}
E_2^{p,q} = \HH^p(G, \HH^q(X,\F)).
\end{eqnarray*}

The action of $G$ on $X$ induces an action of  $G$ on the 
cohomology ring $\HH^*(X,\F)$, and we denote the subspace of  $\HH^*(X,\F)$ fixed by this action by
$\HH^*(X,\F)^G$.

When $\card(G)$ is invertible in an $\F$-module $M$  (so in particular when $G$ is finite and $\F$ is of characteristic $0$), we have that
$\HH^n(G, M) = 0$, for $n >0$. This implies that when $G$ is finite and $\mathrm{char}(\F) = 0$, 
the spectral sequence \eqref{eqn:spectral} degenerates at its $E_2$-term, and moreover,
\begin{equation}
\label{eqn:iso1}
\HH^n_G(X,\F) \cong \HH^0(G,\HH^n(X,F)) \cong \HH^n(X,\F)^G,
\end{equation}
where the second isomorphism follows from \cite[\S III:1 (1.8)]{Brown-book}.

Moreover, if $X$ is a $G$-space, such that every isotropy group is finite  (for example, when $G$ is finite)  and $\mathrm{char}(\F) = 0$, then 
\begin{equation}
\label{eqn:iso2}
\HH^*(X,\F)^G \cong \HH^*(X/G,\F)
\end{equation}
(see, for example, \cite[page 4, Remark 2]{Brion}).

Thus, combining \eqref{eqn:iso1} and\eqref{eqn:iso2} 
in case $G$ is finite and $\mathrm{char}(\F) = 0$,
 we have the isomorphisms
\begin{equation}
\label{eqn:iso}
\HH^{\ast} (X/G,\F) \xrightarrow{\sim} \HH_{G}^{\ast} (X,\F
) \xrightarrow{\sim} \HH^\ast(X,\F)^G.
\end{equation}

\begin{notation}
  \label{not:equivariant-betti} For any $\mathfrak{S}_{\mathbf{k}}$ symmetric
  semi-algebraic subset $S \subset \R^{k}$ with $\mathbf{k}= (k_{1} , \ldots
  ,k_{\omega}) \in \Z_{>0}^{\omega}$, with $k= \sum_{i=1}^{\omega} k_{i}$,
  and any field $\F$,  we denote
  \begin{eqnarray*}
    b_{\mathfrak{S}_{\mathbf{k}}}^{i} (S,\F) & =  &   \dim_\F \HH^{i}_{\mathfrak{S}_\kk} (S,\F) \\
    b_{\mathfrak{S}_{\mathbf{k}}} (S,\F) & = & \sum_{i \geq 0}
    b_{\mathfrak{S}_{\mathbf{k}}}^{i} (S,\F).
    \end{eqnarray*}
\end{notation}

\begin{remark}
  \label{rem:non-symm-to-symm} Let $\mathbf{k}= (k_{1} , \ldots ,k_{\omega})
  \in \Z_{>0}^{\omega}$, with $k= \sum_{i=1}^{\omega} k_{i}$, and let $V
  \subset \R^{k}$ be a real variety symmetric with respect to the action of
  $\mathfrak{S}_{\mathbf{k}}$ permuting each block of $k_{i}$ coordinates
  independently. Suppose that $V$ is defined by a finite set $\mathcal{P}
  \subset \R [ \X^{(1)} , \ldots
  ,\X^{(\omega)} ]$ of non-negative polynomials
  which are not necessarily symmetric with respect to each block
  $\X^{(i)}$. Then, there exists
  $P^{\symm} \in \R [
  \X^{(1)} , \ldots ,\X^{(
  \omega)} ]$, such that $P^{\symm}$ is symmetric
  in each block $\X^{(i)}$, $\deg (
  P^{\symm}) \leq \max_{P \in \mathcal{P}}   \deg
  (P)$, and $V= \ZZ \left(P^{\symm} , \R^{k}
  \right)$. More precisely, for each $P \in \mathcal{P}$, and each
  $\boldsigma= (\sigma_{1} , \ldots , \sigma_{\omega})
  \in \mathfrak{S}_{\mathbf{k}}$, let
  \begin{eqnarray*}
    P^{\boldsigma} & = & P (\sigma_{1} (
    \X^{(1)}) , \ldots , \sigma_{\omega} (
    \X^{(\omega)}))  ,
  \end{eqnarray*}
  where $\sigma_{i} (\x^{(i)}) = \sigma_{i} (
  \X^{(i)}_{1} , \ldots
  ,\X^{(i)}_{k_{i}}) = (
  \X^{(i)}_{\sigma_{i} (1)} , \ldots
  ,\X^{(i)}_{\sigma_{i} (k_{i})})$ for each $i,1
  \leq i \leq \omega$.
  
  Then, $P^{\boldsigma}$ is also non-negative over
  $\R^{k}$, and $\deg (P^{\boldsigma}) = \deg (P)$.
  Now letting
  \begin{eqnarray*}
    P^{\symm} & = & \sum_{P \in
    \mathcal{P},\boldsigma \in
    \mathfrak{S}_{\mathbf{k}}} P^{\boldsigma}
  \end{eqnarray*}
  we have that $P^{\symm} \in \R [
  \X^{(1)} , \ldots ,\X^{(
  \omega)} ]$, $V= \ZZ \left(P^{\symm} , \R^{k}
  \right)$, $P^{\symm}$ is non-negative over
  $\R^{k}$, $\deg (P^{\symm}) \leq \max_{P \in
  \mathcal{P}}   \deg (P)$, and moreover
  $P^{\symm}$ is symmetric in each block of
  variables $\X^{(i)}$.
  
  Notice that the corresponding statement is not always true over $\C$. For
  example, let $\mathbf{k}= (k)$, and consider the symmetric variety $V_{\C}
  =  \ZZ \left(\mathcal{P}, \C^{k} \right)$ defined by
  \begin{eqnarray*}
    \mathcal{P} & = & \bigcup_{1 \leq i \leq k} \left\{ \prod_{j=1}^{d} (
    X_{i} -j) \right\} ,
  \end{eqnarray*}
  with $d \leq k$.
  
  Note that each polynomial in $\mathcal{P}$ is of degree $d$, but not
  symmetric. Now, $b_{0} \left(V_{\C} /\mathfrak{S}_{k} ,\mathbb{Q} \right)
  =  (\Theta (k))^{d}$ (see Example \ref{ex:zero-dimensional}). On
  the other hand we show (see 
 \eqref{eqn:betti-bound-quotient-complex}) 
  that for any symmetric variety $V_{\C} \subset \C^{k}$ defined by
  symmetric
  polynomials of degree at most $d \leq k$,
  \begin{eqnarray*}
    b_{0} \left(V_{\C} /\mathfrak{S}_{k} ,\mathbb{Q} \right) & \leq & d^{O (
    d)}  .
  \end{eqnarray*}
  This leads to a contradiction for $k \gg d.$ Thus, it is not possible to
  describe $V_{\C}$ by symmetric polynomials in $\C [ X_{1} , \ldots ,X_{k} ]$
  of degree $d$.
\end{remark}

Now let $V=  \ZZ \left(P, \R^{k} \right)$ be a variety that is invariant
under the usual action of $\mathfrak{S}_{\mathbf{k}}$ for some $\mathbf{k}= (
k_{1} , \ldots ,k_{\omega}) \in \Z_{>0}^{\omega}$, with $k=
\sum_{i=1}^{\omega} k_{i}$. A fundamental result due to Procesi and Schwarz
{\cite{Procesi-Schwarz}} states that the orbit space
$V/\mathfrak{S}_{\mathbf{k}}$ has the structure of a semi-algebraic set which
has the following explicit description.

\begin{notation}
  \label{not:elementary-symmetric} For each $k \geq 1  ,i  \geq 0$, we will
  denote by $e_{i}^{(k)} (X_{1} , \ldots ,X_{k})$ the $i$-th elementary
  symmetric polynomial in $X_{1} , \ldots ,X_{k}$, and denote by $\phi_{k} :
  \R^{k} \rightarrow \R^{k}$ (resp., $\phi_{k} :
  \C^{k} \rightarrow \C^{k}$), the map defined by $\x \mapsto (e_{1}^{(k)} (
  \x) , \ldots ,e_{k}^{(k)} (\x))$. Similarly, for $k \geq 1,i  \geq 0$,
  we denote
  \begin{eqnarray*}
    p_{i}^{(k)} (X_{1} , \ldots ,X_{k}) & = & \sum_{j=1}^{k} X_{j}^{i} ,
  \end{eqnarray*}
  and denote by $\psi_{k} : \R^{k} \rightarrow \R^{k}$ (resp., $\psi_{k} : \C^{k} \rightarrow \C^{k}$) , the map defined by
  $\x \mapsto (p_{1}^{(k)} (\x) , \ldots ,p_{k}^{(k)} (
  \x))$. 
  
  \noindent More generally, for $\mathbf{k}= (k_{1} , \ldots
  ,k_{\omega}) \in \Z_{>0}^{\omega}$, with $k= \sum_{i=1}^{\omega} k_{i}$, 
  we will denote by $\phi_{\mathbf{k}} : \R^{k} \rightarrow \R^{k}$
  (respectively $\psi_{\mathbf{k}} : \R^{k} \rightarrow \R^{k}$) the map
  defined by $(\x^{(1)} , \ldots ,\x^{(\omega)}) \mapsto
  (\phi_{k_{1}} (\x^{(1)}) , \ldots , \phi_{k_{\omega}} (
  \x^{(\omega)}))$ (respectively $(\x^{(1)} , \ldots
  ,\x^{(\omega)}) \mapsto (\psi_{k_{1}} (\x^{(1)}) ,
  \ldots , \psi_{k_{\omega}} (\x^{(\omega)}))$). 
  We will also denote by the same symbols, $\phi_\kk,\psi_\kk$,  the corresponding maps $\C^k \rightarrow \C^k$ in the complex case.
  This should not cause any confusion.

  Note that the \emph{Newton identities} (see for example \cite[page 103]{BPRbook2}) give expressions for each sequence of polynomials $(
  e_{i}^{(k)})_{1 \leq i \leq k}$ and $(p_{i}^{(k)})_{1 \leq i \leq k}$
  in terms of the other. Moreover, for all $j \geq 0$, there exists uniquely
  defined polynomials $g_{j}^{(k)} \in \Q [ Z_{1} , \ldots ,Z_{k} ]$ such
  that
  \begin{eqnarray*}
    p_{j}^{(k)} (X_{1} , \ldots ,X_{k}) & = & g_{j}^{(k)} (p_{1}^{(k
   )} , \ldots ,p_{k}^{(k)}) .
  \end{eqnarray*}
  In particular,
  \begin{eqnarray*}
    g_{0}^{(k)} (Z_{1} , \ldots ,Z_{k}) & = & k,\\
    g_{j}^{(k)} (Z_{1} , \ldots ,Z_{k}) & = & Z_{j}  ,1 \leq j \leq k.
  \end{eqnarray*}
  \end{notation}

Note that 
\begin{eqnarray}
\deg(g_j^{(k)}) &\leq & 1 \mbox{ for $0 \leq j \leq k$}, \\
\deg(g_j^{(k)}) &\leq & j \mbox{ for $j > k $}.
\end{eqnarray}
 
\begin{notation}
  \label{not:hankel}We denote by $\Hank^{(k)} (
  Z_{1} , \ldots ,Z_{k}) \in \R [ Z_{1} , \ldots ,Z_{k} ]^{k \times k}$ the
  matrix defined by
  \begin{eqnarray}
  \label{eqn:deg-hankel1}
  \nonumber
    (\Hank^{(k)} (Z_{1} , \ldots ,Z_{k})
   )_{i,j} & = & (ij) g^{(k)}_{i+j-2} (Z_{1} , \ldots ,Z_{k}). 
  \end{eqnarray}
\end{notation}

Note that the degree of $\det(\Hank^{(k)})$ is dominated by the degree of the product of its elements on the main diagonal, and
it follows from \eqref{eqn:deg-hankel1} that,
\begin{eqnarray}
\label{eqn:deg-hankel2}
\nonumber
\deg(\det(\Hank^{(k)})) &\leq & 2(1 + 2 + \cdots + (k-1)) \mbox{ (using \eqref{eqn:deg-hankel1})} \\
&\leq& k(k-1).
\end{eqnarray}

\begin{notation}
  For any real symmetric matrix $A \in \R^{k \times k}$ we denote by $A
  \succeq 0$ the property that $A$ is positive semi-definite. 
\end{notation}

Now suppose that $\mathbf{k}= (k_{1} , \ldots ,k_{\omega}) \in \Z_{>0}, \dd= (d_1,\ldots,d_\omega) \in
\Z_{\geq 0}^{\omega}$, with $k= \sum_{i=1}^{\omega} k_{i}$, and 
$Q \in \mathbf{L}[\X^{(1)} , \ldots ,\X^{(\omega)} ]^{\mathfrak{S}_\kk}_{\leq \dd}$ (cf. Notation \ref{not:multisymmetric-polynomial}),
where $\mathbf{L} = \R$ or $\C$.

\begin{lemma}
\label{lem:multisymmetric-polynomial}
With the notation introduced above, there exists a polynomial $\widetilde{Q} \in \mathbf{L} [\ZB^{(1)} , \ldots ,\ZB^{(\omega)} ]_{\leq \dd}$, such that
\begin{eqnarray*}
&Q (\X^{(1)} , \ldots
   ,\X^{(\omega)})  =&
   \\
   &
   \widetilde{Q} (p_{1}^{(k_{1})} (
   \X^{(1)}) , \ldots ,p_{d}^{(k_{1})} (
   \X^{(1)}) , \ldots ,p_{1}^{(k_{\omega})} (
   \X^{(\omega)}) , \ldots ,p_{d}^{(k_{\omega})}
   (\X^{(\omega)})).& 
   \end{eqnarray*}
\end{lemma}

\begin{proof}
First observe that
\[
\mathbf{L}[\X^{(1)},\ldots,\X^{(\omega)}]^{\mathfrak{S}_\kk}_{\leq \dd} \cong \mathbf{L}[\X^{(1)}]^{\mathfrak{S}_{k_1}}_{\leq d_1} \otimes \cdots \otimes \mathbf{L}[\X^{(\omega)}]^{\mathfrak{S}_{k_\omega}}_{\leq d_\omega},
\]
and for each $i, 1 \leq i \leq \omega$, 
using the fundamental theorem of
symmetric polynomials,
\[
\mathbf{L}[\X^{(i)}]^{\mathfrak{S}_{k_i}} = \mathbf{L}[p_1^{(k_i)}(\X^{(i)}),\ldots, p_{k_i}^{(k_i)}(\X^{(i)})].
\]
The lemma follows immediately.
\end{proof}

Now let    $Q \in \mathbf{L}[\X^{(1)} , \ldots ,\X^{(\omega)} ]^{\mathfrak{S}_\kk}_{\leq \dd}$.
Let $V = \ZZ \left(Q, \mathbf{L}^{k} \right)$, and let $\mathfrak{S}_{\mathbf{k}}$
act on $V$ by permuting each block of coordinates
$\X^{(i)} ,1 \leq i \leq \omega$.
Since for each $k$ the polynomials $p_1^{(k)},\ldots,p_k^{(k)}$ separate the $\mathfrak{S}_k$ orbits in 
$\mathbf{L}^k$,  the image of the map $\psi_{\mathbf{k}}$ 
is homeomorphic to the quotient $V/\mathfrak{S}_{\mathbf{k}}$, 
a fact that we record in the following proposition.

\begin{proposition}
  \label{prop:quotient-sa} The quotient space $V/\mathfrak{S}_{\mathbf{k}}$ is
  homeomorphic to the image $\psi_{\mathbf{k}} (V)$.
\end{proposition}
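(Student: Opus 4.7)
The plan is to construct the explicit homeomorphism from $V/\mathfrak{S}_{\mathbf{k}}$ to $\psi_{\mathbf{k}}(V)$ induced by $\psi_{\mathbf{k}}$ and verify the four standard properties (well-defined, continuous, bijective, closed). Since each $p_j^{(k_i)}$ is invariant under permutations of the block $\boldsymbol{X}^{(i)}$, the restriction $\psi_{\mathbf{k}}|_V$ is constant on $\mathfrak{S}_{\mathbf{k}}$-orbits, so by the universal property of the quotient it factors through a continuous surjection $\bar{\psi}: V/\mathfrak{S}_{\mathbf{k}} \to \psi_{\mathbf{k}}(V)$. Surjectivity is immediate from the definition of the image.

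Next I would establish injectivity of $\bar\psi$, i.e.\ the orbit-separation property alluded to just before the proposition. Fix an index $i$ and let $\mathbf{x}^{(i)},\mathbf{y}^{(i)} \in \R^{k_i}$ satisfy $p_j^{(k_i)}(\mathbf{x}^{(i)}) = p_j^{(k_i)}(\mathbf{y}^{(i)})$ for $1 \le j \le k_i$. By the Newton identities (Notation \ref{not:elementary-symmetric}) one then also has $e_j^{(k_i)}(\mathbf{x}^{(i)}) = e_j^{(k_i)}(\mathbf{y}^{(i)})$ for all $j$, so the two univariate polynomials $\prod_m (T - x_m^{(i)})$ and $\prod_m (T - y_m^{(i)})$ coincide in $\R[T]$; hence their root multisets agree, which means $\mathbf{x}^{(i)}$ and $\mathbf{y}^{(i)}$ lie in the same $\mathfrak{S}_{k_i}$-orbit. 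Applying this block by block shows that the fibers of $\psi_{\mathbf{k}}$ are precisely the $\mathfrak{S}_{\mathbf{k}}$-orbits, so $\bar\psi$ is injective.

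It remains to show $\bar\psi$ is closed. My preferred route is properness: the power sum $p_2^{(k)}(\mathbf{x}) = \sum_{j=1}^{k} x_j^2$ dominates $\|\mathbf{x}\|^2$, and summing over blocks one obtains $\sum_{i=1}^{\omega} p_2^{(k_i)}(\mathbf{x}^{(i)}) = \|\mathbf{x}\|^2$. Consequently $\psi_{\mathbf{k}}^{-1}$ of any bounded set is bounded, so $\psi_{\mathbf{k}}: \R^k \to \R^k$ is proper (this part of the argument is semi-algebraic and transfers to an arbitrary real closed field $\R$ via the Tarski--Seidenberg principle). The restriction $\psi_{\mathbf{k}}|_V$ to the closed set $V$ is still proper, and since the orbit map $\pi: V \to V/\mathfrak{S}_{\mathbf{k}}$ is a closed surjection (the group being finite), the factorization $\psi_{\mathbf{k}}|_V = \bar\psi \circ \pi$ forces $\bar\psi$ to be proper as a map onto $\psi_{\mathbf{k}}(V)$, hence closed. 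A continuous closed bijection into a Hausdorff space is a homeomorphism.

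The step I expect to require the most care is the properness/closedness argument in the semi-algebraic setting over a non-archimedean real closed field, where ``bounded implies totally bounded'' reasoning is unavailable and one must instead invoke the semi-algebraic formulation of properness (a semi-algebraic continuous map is closed iff preimages of semi-algebraically bounded sets are semi-algebraically bounded, cf.\ \cite[Chapter 6]{BPRbook2}). Once that is in place, the use of $p_2^{(k_i)}$ to control $\|\mathbf{x}\|$ makes the conclusion routine.
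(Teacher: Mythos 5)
Your proof is correct and follows essentially the same route the paper takes: the paper justifies this proposition only by the remark that the power sums $p_1^{(k)},\ldots,p_k^{(k)}$ separate $\mathfrak{S}_k$-orbits, and your argument is the natural fleshing-out of that remark (Newton identities for injectivity, properness of $\psi_{\mathbf{k}}$ via $p_2$ for closedness, then continuous closed bijection implies homeomorphism). The only cosmetic point is that $\sum_i p_2^{(k_i)}(\mathbf{x}^{(i)})$ \emph{equals} $\|\mathbf{x}\|^2$ rather than merely dominating it (with the $k_i=1$ blocks handled trivially by $p_1$), which only strengthens your properness step.
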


In the case $\mathbf{L} = \R$,
by the Tarski-Seidenberg principle (see for example \cite[Chapter 2]{BPRbook2}) the image of $\psi_{\mathbf{k}}$ is a semi-algebraic set. Procesi and Schwarz provided the following description 
of the image of $\psi_{\mathbf{k}}$ as a \emph{basic closed} semi-algebraic set.

\begin{theorem}
  \label{thm:description-procesi}{\cite{Procesi-Schwarz}} 
  The image of
  $\psi_{\mathbf{k}}$ is a basic closed semi-algebraic set described by
  \begin{eqnarray}
  \label{eqn:procesi}
   \psi_{\mathbf{k}} (\R^k) &= &\{
    (\mathbf{z}^{(1)} , \ldots ,\mathbf{z}^{(\omega)}) \in \R^{k}   \mid
    \Hank^{(k_{i})} (\mathbf{z}^{(i)})
    \succeq 0,1 \leq i \leq \omega\}. 
  \end{eqnarray}
\end{theorem}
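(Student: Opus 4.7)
The plan is to reduce to the single-block case $\omega=1$. Both the algebraic condition $F(\mathbf{z})=0$ and the matrix conditions $\operatorname{Hank}^{(k_i)}(\mathbf{z}^{(i)})\succeq 0$ decouple across the blocks, and the map $\psi_{\mathbf{k}}$ is the Cartesian product $\psi_{k_1}\times\cdots\times\psi_{k_\omega}$. Thus it suffices to show that for any single block of variables the image $\psi_k(\R^k)$ coincides with $\{\mathbf{z}\in\R^k\mid\operatorname{Hank}^{(k)}(\mathbf{z})\succeq 0\}$; once this purely geometric statement is available, the identity $Q = F\circ\psi_{\mathbf{k}}$ translates $Q(\mathbf{x})=0$ into $F(\mathbf{z})=0$ for $\mathbf{z}=\psi_{\mathbf{k}}(\mathbf{x})$, which, combined with Proposition \ref{prop:quotient-sa}, yields the description of $\psi_{\mathbf{k}}(V)$.

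For the forward inclusion $\psi_k(\R^k)\subseteq\{\operatorname{Hank}^{(k)}\succeq 0\}$, let $\mathbf{z}=\psi_k(\mathbf{x})$ with $\mathbf{x}\in\R^k$. By the very definition of the polynomials $g_j^{(k)}$ in Notation \ref{not:elementary-symmetric}, $g_{i+j-2}^{(k)}(\mathbf{z}) = p_{i+j-2}^{(k)}(\mathbf{x}) = \sum_{\ell=1}^{k} x_\ell^{i+j-2}$, so that
\[
\operatorname{Hank}^{(k)}(\mathbf{z})_{i,j} \;=\; ij\sum_{\ell=1}^{k} x_\ell^{i+j-2} \;=\; \sum_{\ell=1}^{k}\bigl(i\,x_\ell^{i-1}\bigr)\bigl(j\,x_\ell^{j-1}\bigr) \;=\; (M^{\top} M)_{i,j},
\]
where $M\in\R^{k\times k}$ has entries $M_{\ell,i}=i\,x_\ell^{i-1}$. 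Hence $\operatorname{Hank}^{(k)}(\mathbf{z})= M^{\top} M\succeq 0$.

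The reverse inclusion is the heart of the argument and the expected main obstacle. Given $\mathbf{z}\in\R^k$ with $\operatorname{Hank}^{(k)}(\mathbf{z})\succeq 0$, apply the Newton identities to recover $e_1,\ldots,e_k\in\R$ from $(z_1,\ldots,z_k)=(p_1^{(k)},\ldots,p_k^{(k)})$, and form the monic polynomial $T(X)=X^k-e_1X^{k-1}+\cdots+(-1)^k e_k\in\R[X]$. Its $k$ roots $\mathbf{x}\in\C^k$ (with multiplicity) satisfy $\psi_k(\mathbf{x})=\mathbf{z}$ by Newton's formulas; the only issue is that some roots could be non-real. Here one invokes Hermite's classical theorem (see e.g.\ \cite[Ch.~4]{BPRbook2}), which identifies the signature of the Hankel form $H(\mathbf{z})$ with entries $p_{i+j-2}^{(k)}(\mathbf{x})=g_{i+j-2}^{(k)}(\mathbf{z})$ as the number of distinct real roots of $T$ minus the number of conjugate non-real pairs, and its rank as the total number of distinct roots. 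In particular, $H(\mathbf{z})$ is positive semi-definite precisely when all roots of $T$ are real. Since $\operatorname{Hank}^{(k)}(\mathbf{z}) = D\,H(\mathbf{z})\,D$ with $D=\operatorname{diag}(1,2,\ldots,k)$, positive semi-definiteness of $\operatorname{Hank}^{(k)}(\mathbf{z})$ is equivalent to that of $H(\mathbf{z})$, forcing $T$ to split over $\R$ and producing the desired real preimage $\mathbf{x}\in\R^k$ of $\mathbf{z}$ under $\psi_k$.
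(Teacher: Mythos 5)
Your proposal is correct, but note that the paper itself does not prove this statement: it quotes it from Procesi and Schwarz, whose general theorem describes the orbit space of any compact group acting orthogonally as the subset of the variety of relations among a generating set of invariants on which the matrix of inner products of gradients, $\langle \nabla p_i,\nabla p_j\rangle$, rewritten in the invariants, is positive semi-definite; for the power sums one has $\langle \nabla p_i,\nabla p_j\rangle = ij\,p_{i+j-2}$, which is exactly $\operatorname{Hank}^{(k)}$. You replace this general machinery by a direct verification special to products of symmetric groups: block-wise decoupling, the factorization $\operatorname{Hank}^{(k)}(\psi_k(\mathbf{x}))=M^{\top}M$ for the forward inclusion, and Newton's identities together with Hermite's quadratic form for the converse, with the diagonal congruence $\operatorname{Hank}^{(k)}=DHD$ transferring semi-definiteness, and finally the set-theoretic identity $\psi_{\mathbf{k}}(V)=\psi_{\mathbf{k}}(\R^k)\cap \ZZ(F,\R^k)$ coming from $Q=F\circ\psi_{\mathbf{k}}$. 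What your route buys is elementarity and the fact that it works verbatim over an arbitrary real closed field, since the Hermite theory in \cite{BPRbook2} is developed in that generality, whereas the cited Procesi--Schwarz theorem is proved over $\mathbb{R}$ and strictly speaking needs a routine Tarski--Seidenberg transfer to cover the paper's setting; what the citation buys is generality (any compact group, any generating set of invariants) with no computation. Two small repairs: Hermite's theorem gives signature of $H$ equal to the number of distinct real roots and rank equal to the number of distinct complex roots (your phrase ``minus the number of conjugate non-real pairs'' misstates the signature, although the consequence you actually use, namely $H\succeq 0$ if and only if all roots of $T$ are real, holds under either reading); and for the assertion that the right-hand side is a \emph{basic closed} semi-algebraic set you should add the one-line remark that $\operatorname{Hank}^{(k_i)}(\mathbf{z}^{(i)})\succeq 0$ is equivalent to the non-negativity of all principal minors, as the paper does in the discussion following the theorem.
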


Using the same notation as in Proposition \ref{prop:quotient-sa}, let 
$V = \ZZ(Q,\R^k)$ and $S$ the semi-algebraic set defined by $Q \geq 0$.
We have the following corollary of Theorem  \ref{thm:description-procesi}.

\begin{corollary}
  \label{cor:description-procesi}
  The images $\psi_{\mathbf{k}} (V), \psi_{\mathbf{k}}(S)$ are basic closed semi-algebraic sets 
  described by
  \begin{eqnarray*}
  \nonumber
    & \psi_{\mathbf{k}} (V) = &  \\
    &\ZZ (\widetilde{Q}, \R^{k}) \cap \{
    (\mathbf{z}^{(1)} , \ldots ,\mathbf{z}^{(\omega)}) \in \R^{k}   \mid
    \Hank^{(k_{i})} (\mathbf{z}^{(i)})
    \succeq 0,1 \leq i \leq \omega\},&  \\
    & \psi_{\mathbf{k}} (S) = & \\
    &\RR (\widetilde{Q} \geq 0, \R^{ k}) \cap
    \{ (\mathbf{z}^{(1)} , \ldots ,\mathbf{z}^{(\omega)}) \in
    \R^{k}   \mid  \Hank^{(k_{i})} (
    \mathbf{z}^{(i)}) \succeq 0,1 \leq i \leq \omega \}.&
  \end{eqnarray*}
\end{corollary}

\subsection{Comparison between real and complex quotients}
\label{subsec:intro-comparison}
In order to contrast the topological behavior of the quotient space of equivariant real and complex varieties,
fix two finite sets of polynomials 
\begin{equation}
\label{eqn:PR}
\mathcal{P}_{\R} \subset \R [ X_{1} ,
\ldots ,X_{k} ], \mathcal{P}_{\C} \subset \C [ X_{1} , \ldots ,X_{k}],
\end{equation}
symmetric in $X_{1} , \ldots ,X_{k}$, and let $V_{\R} = \ZZ \left(
\mathcal{P}_{\R} , \R^{k} \right)$ and $V_{\C} = \ZZ \left(\mathcal{P}_{\C} ,
\C^{k} \right)$. Let $\deg (P)   \leq  d \leq k$ for each $P \in
\mathcal{P}_{\R} \cup \mathcal{P}_{\C}$. Let $\mathfrak{S}_{k}$ act on
$V_{\R}$ as well as $V_{\C}$ by permuting the coordinates $X_{1} , \ldots
,X_{k}$.

\subsubsection{Complex quotient}
The quotient space $V_{\C} /\mathfrak{S}_{k}$ is an \emph{algebraic subset} of 
$\C^k$.
To see this we first need a well known result whose proof we include for completeness.
\begin{lemma}
\label{lem:surjective}
The maps $\phi_k,\psi_k:\C^k \rightarrow \C^k$ are surjective.
\end{lemma}

\begin{proof}
First observe that because of Newton identities it suffices to prove the lemma for the map $\phi_k$. Given, $\zb = (z_1,\ldots,z_k) \in \C^k$,
consider the polynomial $F_\zb = T^k - z_1 T^{k-1} + \cdots+ (-1)^d z_d$. Since $\C$ is algebraically closed there exists $k$ roots,
$x_1,\ldots,x_k \in \C$ of $F_\zb$. Then, $\phi_k(x_1,\ldots,x_k) = \zb$.
\end{proof}

Now it follows from the fundamental theorem of symmetric polynomials, that for each $P  \in \mathcal{P}_{\C}$, there exists a polynomial 
$\widetilde{P} \in \C [ Z_{1} , \ldots,Z_{d} ]$ 
with 
$\deg (\widetilde{P}) \leq d$, such that $P=\widetilde{P} (p_{1}^{(k)} ,\ldots ,p_{d}^{(k)})$. 
It then follows from Proposition \ref{prop:quotient-sa} and Lemma \ref{lem:surjective} that

\begin{eqnarray}
\label{eqn:complex-quotient-algebraic}
V_{\C} /\mathfrak{S}_{k} \cong
\ZZ(\widetilde{\mathcal{P}}_\C, \C^{d}) 
\times \C^{k-d}, 
\end{eqnarray}
where $\widetilde{\mathcal{P}}_\C= \bigcup_{P \in \mathcal{P}_{\C}} \{ \widetilde{P} \}$.

It now follows from \eqref{eqn:complex-quotient-algebraic}  and Corollary
\ref{cor:betti-bound-algebraic-complex} that,
with the assumptions above, and for
  any field of coefficients $\F$,
  \begin{eqnarray}
  \label{eqn:betti-bound-quotient-complex}
  \nonumber
    b(V_{\C} /\mathfrak{S}_{k} ,\F) & \leq & 2d (
    4d-1)^{2d-1}\\
    & = & d^{O (d)} .
  \end{eqnarray}

More generally, let $\mathbf{k}= (k_{1} , \ldots ,k_{\omega}) \in
\Z_{>0}^{\omega}, \dd = (d_1,\ldots,d_\omega) \in \Z_{\geq 0}, \dd\leq \kk$, with $k= \sum_{i=1}^{\omega} k_{i}, d= \sum_{i=1}^{\omega}d_i$, 
$\mathcal{P}_{\C}
\subset \C [ \X^{(1)} , \ldots,\X^{(\omega)} ]^{\mathfrak{S}_\kk}_{\leq \dd}$.
Denoting as above $V_{\C} = \ZZ(\mathcal{P}_{\C} , \C^{k})$ we
have:

\begin{theorem}
  For any field of coefficients $\F,$
  \begin{eqnarray*}
    b(V_{\C} /\mathfrak{S}_{\mathbf{k}} ,\F) & \leq
    & 2d (4d -1)^{2d'-1},  
  \end{eqnarray*}
  where $d' =  \sum_{i=1}^{\omega} \min (k_{i},d_i)$. 
  
  In particular, if $d_i \leq k_{i}$ for each $i,1 \leq i \leq \omega$,
  \begin{eqnarray*}
    b(V_{\C} /\mathfrak{S}_{\mathbf{k}} ,\F) & \leq
    & d^{O (\omega  d)} .
  \end{eqnarray*}
\end{theorem}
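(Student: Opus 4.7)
The plan is to reduce the bound to Corollary \ref{cor:betti-bound-algebraic-complex} applied in ambient dimension $D$, following the same strategy already sketched in the paragraphs preceding Theorem \ref{thm:betti-bound-quotient-complex} for the case $\omega=1$. The key input is the fundamental theorem of symmetric polynomials, used block-by-block, combined with the degree bound $\deg P \leq d$.

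First, for each $P \in \mathcal{P}_\C$, I apply the fundamental theorem of symmetric polynomials separately in each block $\mathbf{X}^{(i)}$ to produce $G_P \in \C[Z^{(1)}_1,\ldots,Z^{(1)}_{\min(d,k_1)},\ldots,Z^{(\omega)}_1,\ldots,Z^{(\omega)}_{\min(d,k_\omega)}]$ with $\deg G_P \leq d$ such that
\[
P = G_P\bigl(e^{(k_1)}_1,\ldots,e^{(k_1)}_{\min(d,k_1)},\ldots,e^{(k_\omega)}_1,\ldots,e^{(k_\omega)}_{\min(d,k_\omega)}\bigr).
\]
The crucial observation is that $G_P$ involves none of the higher elementary symmetric polynomials $e^{(k_i)}_j$ with $j > d$, because $\deg e^{(k_i)}_j = j > d = \deg P$.

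Second, the map $\phi_\mathbf{k}: \C^k \to \C^k$ is surjective over $\C$ (every monic univariate complex polynomial splits completely) and induces a homeomorphism $\C^k/\mathfrak{S}_\mathbf{k} \xrightarrow{\sim} \C^k$. Restricting to the $\mathfrak{S}_\mathbf{k}$-invariant set $V_\C$, this identifies $V_\C/\mathfrak{S}_\mathbf{k}$ with $\phi_\mathbf{k}(V_\C) \subset \C^k$, which by the previous step is the common zero set of $\mathcal{G} = \{G_P : P \in \mathcal{P}_\C\}$. Since the $G_P$'s involve only the $D = \sum_i \min(d,k_i)$ ``low-degree'' coordinates, I obtain the product decomposition
\[
V_\C/\mathfrak{S}_\mathbf{k} \;\cong\; \ZZ(\mathcal{G},\C^D) \times \C^{k-D}.
\]
The second factor is contractible, so $b(V_\C/\mathfrak{S}_\mathbf{k},\F) = b(\ZZ(\mathcal{G},\C^D),\F)$.

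Finally, I apply Corollary \ref{cor:betti-bound-algebraic-complex} to $\mathcal{G}$, which is a finite family of polynomials of degree at most $d$ in $D$ complex variables; this yields $b(\ZZ(\mathcal{G},\C^D),\F) \leq 2d(4d-1)^{2D-1}$, which is exactly the claimed bound. The ``in particular'' assertion follows because the hypothesis $d \leq k_i$ for every $i$ forces $D = \omega d$, so $(4d-1)^{2D-1} = d^{O(\omega d)}$. The only step requiring any real care is the cylindrical product decomposition --- the mechanism by which the degree hypothesis $\deg P \leq d$ is converted into the sharper ambient dimension $D$ in place of $k$ --- but it is a direct consequence of the degree-based truncation of the generators of the invariant ring, and presents no genuine obstacle.
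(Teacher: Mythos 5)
Your proof is correct and follows essentially the same route as the paper: express each $P\in\mathcal{P}_{\C}$ block-by-block via the fundamental theorem of symmetric functions as a degree-$\leq d$ polynomial in the first $\min(d,k_i)$ basic invariants of each block, identify $V_{\C}/\mathfrak{S}_{\mathbf{k}}$ with the resulting zero set in the $D$ low-degree coordinates, and apply Corollary \ref{cor:betti-bound-algebraic-complex} in ambient dimension $D$. The only differences are cosmetic — the paper uses power sums where you use elementary symmetric polynomials, and you spell out explicitly the cylinder factor $\C^{k-D}$ that the paper leaves implicit — which, if anything, makes your write-up slightly more careful.
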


\begin{proof}
Using Lemma \ref{lem:multisymmetric-polynomial} we have that
for each $P  \in \mathcal{P}_{\C}$, there exists
$\widetilde{P}
\in \C [
\ZB^{(1)} , \ldots ,\ZB^{(d)} ]_{\leq \dd}$, where for each $i,1
\leq i \leq \omega$, $\ZB^{(i)}$ is a block of $\min (k_i,d_i)$ variables, such that
\begin{eqnarray*}
&P (\X^{(1)} , \ldots ,\X^{(\omega)}) =& \\
& \widetilde{P}
(
   p_{1}^{(k_{1})} (\X^{(1)}) , \ldots ,p_{\ell_{1}}^{(k_{1}
  )} (\X^{(1)}) , \ldots ,p_{1}^{(k_{\omega})} (
   \X^{(\omega)}) , \ldots ,p_{\ell_{\omega}}^{(k_{\omega})} (
   \X^{(\omega)})). &
\end{eqnarray*}

The quotient space, $V_{\C} /\mathfrak{S}_{\kk}$, is then 
isomorphic
to 
$\ZZ(\widetilde{\mathcal{P}}_\C, \C^{d'}) 
\times \C^{k-d'}
$, where $\widetilde{\mathcal{P}}_\C= \bigcup_{P \in \mathcal{P}_{\C}} \{ \widetilde{P} \}$. 
Now apply 
Corollary
\ref{cor:betti-bound-algebraic-complex}.
\end{proof}

This shows in particular, that in case $d_i \leq k_{i}$ for each $i$, the Betti
numbers of the quotient space $V_{\C} /\mathfrak{S}_{\mathbf{k}}$ can be
bounded in terms of $d$ and $\omega$, independent of $k$.

\subsubsection{Real quotient}
In contrast, the space of orbits of the action of
$\mathfrak{S}_{\mathbf{k}}$ on $V_{\R}$ has the structure of a semi-algebraic
(rather than an algebraic) set (see Proposition \ref{prop:quotient-sa} above).
It is also not possible to bound $b \left(V_{\R} /\mathfrak{S}_{\mathbf{k}}
,\F \right)$ by a function of $\omega$ and $d$ independent of $k$
(similar to the complex case) as shown by the following example.

\begin{example}
  \label{ex:zero-dimensional}Let  $\mathbf{k}= (k)$, and
  \begin{eqnarray*}
    P & = & \sum_{i=1}^{k} \left(\prod_{j=1}^{d} (X_{i} -j) \right)^{2}.
  \end{eqnarray*}
  Then $P$ is symmetric of degree $2d.$ Let $V_{\R} = \ZZ \left(\{ P \} ,
  \R^{k} \right)$. Then $V_{\R}$ consists  of all points $x\in\{1,\ldots, d\}^k$,
   $V_{\R} /\mathfrak{S}_{k}$ is zero-dimensional, and each orbit is represented by 
   a point $\mathbf{y}=(y_1,\ldots,y_k)$, with $1 \leq y_1 \leq y_2 \cdots \leq y_k \leq d$.
   Since each $y_i \in \{1,\ldots,d\}$, the set of orbits  is in one-to-one correspondence with the finite set
   $O_{d,k} = \{(\ell_1,\ldots,\ell_d) \in \Z_{\geq 0} \mid \sum_{i=1}^{d} \ell_i = k\}$.
   It is easy to see that $\card(O_{d,k}) = \binom{d+k-1}{d-1}$.
  Therefore,
  \begin{eqnarray*}
    b_{0} (V_{\R} /\mathfrak{S}_{k} ,\mathbb{Q}) & = &  \binom{d+k-1}{d-1} \\
     &=& (\Theta (k))^{d-1} .
  \end{eqnarray*}
\end{example}

Example \ref{ex:zero-dimensional} shows that there is a fundamental difference
in the topological complexity of the orbit space in the complex and real case.
In the complex case the topological complexity of the orbit space, $V_{\C}
/\mathfrak{S}_{k}$, measured by the sum of the Betti numbers, is bounded by
a function of $d$ independent of $k$ (for $k \geq d$). However, in the real
case, the topology of the space of orbits, $V_{\R} /\mathfrak{S}_{k}$, can
grow with $k$ for fixed $d$. However, it is still possible to bound the
Betti numbers of the quotient $V_{\R} /\mathfrak{S}_{k}$ using the description
of $V_{\R} /\mathfrak{S}_{k}$ given in Theorem \ref{thm:description-procesi},
and the bound on the Betti numbers of basic closed semi-algebraic sets in
Theorem \ref{thm:betti-bound-sa}. 

Let $Q = \sum_{P \in \mathcal{P}_\R} P^2$ (where $\mathcal{P}_\R$ is as in \eqref{eqn:PR}). 
Then there exists using the fundamental theorem of 
symmetric polynomials, 
$\widetilde{Q} \in \R [ Z_{1} , \ldots,Z_{d} ]$ 
with 
$\deg (\widetilde{Q}) \leq 2d$, such that $Q=\widetilde{Q} (p_{1}^{(k)} ,\ldots ,p_{d}^{(k)})$. 

Also notice that a symmetric matrix $A
\in \R^{k \times k}$ is positive semi-definite if and only if all its
symmetric minors are non-negative. 

We can thus describe the set  $\psi_{k}(V_{\R})$ using Eqn. \eqref{eqn:procesi}  
involving
$2^{k}$
polynomial inequalities whose maximum degree equals
\begin{eqnarray*}
  \deg (\det (\Hank^{(k)} (
  \ZB))) & \leq & 
  k (k-1) 
  \mbox{ (using \eqref{eqn:deg-hankel2})},
\end{eqnarray*}
as well as the inequality $-\tilde{Q} \geq 0$.
Applying Theorem \ref{thm:betti-bound-sa} directly (and noting that 
$\deg (\widetilde{Q}) \leq 
2d$), we get for any field of coefficients
$\F$,
\begin{eqnarray*}
   b(\psi_{k} (V_{\R}) ,\F) & \leq & 
  (2^k+1)d'(2(2^k+1)d'+1)^{k-1},
  \end{eqnarray*}
  where $d' = \max(k(k-1),2d)$.
  This yields the bound
  \begin{eqnarray}
   \label{eqn:betti-bound-procesi}
  b(\psi_{k} (V_{\R}) ,\F) & \leq & 
  (O (2^{k} k^2 d))^{k}. 
\end{eqnarray}

An alternative method for bounding the Betti numbers of $V_{\R}
/\mathfrak{S}_{k}$ is to use the ``descent spectral sequence'' argument as in
{\cite{GVZ04}} (see also \cite{Houston2}). Using the fact that the map $\psi_{k}$ is proper {one can construct a spectral sequence which converges to $\HH^\ast(\psi_k(V_{\R}),\F)$. Bounding the dimension of the first term of this sequence then yields  the
inequality that for each $n \geq 0$,
\begin{eqnarray}
\label{eqn:descent}
  b_{n} (\psi_{k}(V_{\R}) ,\F) & \leq &
  \sum_{p+q=n} b_{q} (W^{(p)} ,\F),
\end{eqnarray}
where 
$W^{(p)} = \underbrace{V_{\R} \times_{\psi_{k}} \cdots \times_{\psi_{k}} V_{\R}}_{p+1}$ is the $(p+1)$-fold fibred product
(fibred over the map $\psi_k$) described by
\[
W^{(p)} = \{ (x^0,\ldots,x^p) \in V_\R^{p+1} \mid \psi_k(x^0) = \cdots = \psi_k(x^p)\}.
\]

Clearly, 
$W^{(p)} \subset \R^{(p+1)k} $ 
is defined by 
$(p+1)$ polynomial equations each of degree at most $d$, and $kp$ polynomial equations each of degree at most $k$.
Using inequality \eqref{eqn:descent} and Theorem \ref{thm:betti-bound-algebraic}, we obtain

\begin{eqnarray}
\label{eqn:betti-bound-descent}
\nonumber
  b(\psi_{k} (V_{\R} ) ,\F) & \leq & 
  \sum_{i=0}^{k-1} \sum_{j=0}^{i} b_j(W^{(i-j)},\F) \\
  \nonumber
  & \leq & \sum_{p=0}^{k-1}b(W^{(p)},\F) \\
  \nonumber
  &\leq & \sum_{p=0}^{k-1} \max(2d,k)(2\max(2d,k)+1)^{(p+1)k -1}\\
  &=&
  (k+d)^{O (k^{2})},  
\end{eqnarray}
which is again exponential in $k$ for any fixed $d$. It is also possible to
obtain a bound of a similar shape as in \eqref{eqn:betti-bound-descent}
using a different method. 
First use effective quantifier elimination to
obtain a semi-algebraic description of $\psi_{k}(V_{\R})$, and
then use
Theorem \ref{thm:betti-bound-sa-general}.

\section{Main results and outline of proofs}
\label{sec:main-results}
\subsection{Bounds on equivariant Betti numbers}
Before stating the main theorems of this paper we introduce some more
notation.

\begin{notation}
  \label{not:partition}(Partitions) We denote by $\Pi_{k}$ the set of
  partitions of $k$, where each partition $\pi = (\pi_{1} , \pi_{2} , \ldots
  , \pi_{\ell}) \in \Pi_{k}$, where $\pi_{1} \geq \pi_{2} \geq \cdots \geq
  \pi_{\ell} \geq 1$, and $\pi_{1} + \pi_{2} + \cdots + \pi_{\ell} =k$. We
  call $\ell$ the length of the partition $\pi$, and denote
  $\length (\pi) = \ell$. For 
  $\ell > 0$
  we
  will denote
  \begin{eqnarray*}
    \Pi_{k, \ell} & = & \{ \pi \in \Pi_{k} \mid
    \length (\pi) \leq \ell \} ,\\
    p (k, \ell) & = & \card (\{ \pi \in \Pi_{k}
    \mid \length (\pi) = \ell \}) .
  \end{eqnarray*}
  More generally, for any tuple $\mathbf{k}= (k_{1} , \ldots ,k_{\omega})
  \in \Z_{>0}^{\omega}$, we will denote by
  $\boldPi_{\mathbf{k}} = \Pi_{k_{1}} \times \cdots
  \times \Pi_{k_{\omega}}$, and for each $\boldpi= (
  \pi^{(1)} , \ldots , \pi^{(\omega)}) \in
  \boldPi_{\mathbf{k}}$, we denote by
  $\length (\boldpi) =
  \sum_{i=1}^{\omega} \length (\pi^{(i)})$. We
  also denote for each $\boldsymbol{\ell}= (\ell_{1} , \ldots , \ell_{\omega}) \in
  \Z_{>0}^{\omega}$,
  \begin{eqnarray*}
    | \boldsymbol{\ell} | & = & \ell_{1} + \cdots + \ell_{\omega ,}\\
    \boldPi_{\mathbf{k},\boldsymbol{\ell}} & = & \{
    \boldpi= (\pi^{(1)} , \ldots , \pi^{(\omega)})
    \mid \pi^{(i)} \in \Pi_{k_{i} , \ell_{i}} ,   1 \leq i \leq \omega \}
    ,\\
    p (\mathbf{k},\boldsymbol{\ell}) & = & \card  (\{
    \boldpi= (\pi^{(1)} , \ldots , \pi^{(\omega)})
    \mid \length (\pi^{(i)}) = \ell_{i} ,  1
    \leq i \leq \omega \}) .
  \end{eqnarray*}
\end{notation}

We prove the following theorem.

\begin{theorem}
  \label{thm:main} Let $\mathbf{k}= (k_{1} , \ldots ,k_{\omega}) \in
  \Z_{>0}^{\omega}$,with  $k= \sum_{i=1}^{\omega} k_{i}$. Let $P \in \R [
  \X^{(1)} , \ldots
  ,\X^{(\omega)} ]$, where each
  $\X^{(i)}$ is a block of $k_{i}$ variables, be a
  non-negative polynomial, such that $V= \ZZ \left(P, \R^{k} \right)$ is
  invariant under the action of $\mathfrak{S}_{\mathbf{k}}$ permuting each
  block $\X^{(i)}$ of $k_{i}$ coordinates. Let
 $\deg (P)   \leq  d$.
  Then, for any
  field of coefficients $\F$,
  \begin{eqnarray}
  \label{eqn:thm:main0}
    b (V/\mathfrak{S}_{\mathbf{k}} ,\F) & \leq &
    \sum_{\substack{\boldsymbol{\ell}= (\ell_{1} , \ldots , \ell_{\omega}),\\1 \leq \ell_{i}
    \leq \min (k_{i},2d)}} p (\mathbf{k},\boldsymbol{\ell})  d (2d-1)^{|
    \boldsymbol{\ell} | +1} .
  \end{eqnarray}
  Moreover, for all $i  \geq   \sum_{j=1}^{\omega} \min (k_{j},2d)$
  \begin{eqnarray}
    b_{i} (V/\mathfrak{S}_{\mathbf{k}} ,\F) & = & 0. 
    \label{eqn:thm:main1}
  \end{eqnarray}
  If for each $i,1 \leq i \leq \omega$, $2d  \leq k_{i}$, then
  \begin{eqnarray*}
    b (V/\mathfrak{S}_{\mathbf{k}} ,\F) & \leq & (k_{1}
    \cdots k_{\omega})^{2d} (O (d))^{2 \omega d+1} .
  \end{eqnarray*}
  In particular, in the case 
  $\F = \Q$,
  \begin{eqnarray}
  \label{eqn:thm:main2}
    b_{\mathfrak{S}_{\mathbf{k}}} (V,\mathbb{Q}) & \leq &
    \sum_{\substack{\boldsymbol{\ell}= (\ell_{1} , \ldots , \ell_{\omega}), \\1 \leq \ell_{i}
    \leq \min (k_{i},2d)}} p (\mathbf{k},\boldsymbol{\ell})  d (2d-1)^{|
    \boldsymbol{\ell} | +1} .
  \end{eqnarray}
\end{theorem}

\begin{remark}
  \label{rem:partition-function}For $d=o (k^{1/3})$, and $k \gg 1$, we have
  that $p (k,d) \sim \dfrac{\binom{k-1}{d-1}}{d!} = (\Theta (k))^{d-1}$
  {\cite{Erdos-Lehner}}. Thus, in the special case, when $\omega =1$, $d=O (1
 )$, we have the following asymptotic (for $k  \gg 1$) form of the bound in
  Theorem \ref{thm:main},
  \begin{eqnarray*}
    b (V/\mathfrak{S}_{k} ,\F) & \leq & O (k^{2d-1}) .
  \end{eqnarray*}
\end{remark}

\begin{remark}
\label{rem:multiplicities}
As observed previously (see \eqref{eqn:iso1}),
the action of $\mathfrak{S}_{\mathbf{k}}$ on $V$ induces an action of  $\mathfrak{S}_{\mathbf{k}}$ on the 
cohomology ring $\HH^*(V,\Q)$, and it follows from \eqref{eqn:iso} that there is an isomorphism 
\[
\HH^*(V/\mathfrak{S}_{\mathbf{k}},\Q) \xrightarrow{\sim} \HH^*(V,\Q)^{\mathfrak{S}_{\mathbf{k}}}.
\]

Thus, the bound in \eqref{eqn:thm:main2} gives a \emph{polynomial bound} (for every fixed $d$ and $\omega$) on the \emph{multiplicity} of the trivial representation of
$\mathfrak{S}_{\mathbf{k}}$ in the $\mathfrak{S}_{\mathbf{k}}$-module $\HH^*(V,\Q)$. It is interesting to ask for similar bounds on the multiplicities of other non-trivial irreducible representations of
$\mathfrak{S}_{\mathbf{k}}$ in  $\HH^*(V,\Q)$, and to characterize those that could occur with positive multiplicities.
We will address these questions in a subsequent paper.
\end{remark}

A special case of 
inequality
\eqref{eqn:thm:main0} in Theorem \ref{thm:main}
is of independent interest later. We note this as a corollary.  
\begin{corollary}\label{cor:main}
Suppose that
$\mathbf{k}= (\underbrace{1, \ldots 1}_{m} ,k)$, and $2 \leq d \leq k/2$.
Then, with the same notation as in Theorem \ref{thm:main} above the following bounds hold:
  \begin{eqnarray*}
    b (V/\mathfrak{S}_{\mathbf{k}} ,\F) & \leq & \sum_{1 \leq
    \ell \leq 2d} p (k, \ell)  d (2d-1)^{m+ \ell +1} .\\
    & = & k^{2d}  O (d)^{m+2d+1} .
  \end{eqnarray*}
  \begin{eqnarray*}
    b_{\mathfrak{S}_{\mathbf{k}}} (V,\mathbb{Q}) & \leq & \sum_{1 \leq
    \ell \leq 2d} p (k, \ell)  d (2d-1)^{m+ \ell +1} .\\
    & = & k^{2d}  O (d)^{m+2d+1} .
  \end{eqnarray*}
\end{corollary}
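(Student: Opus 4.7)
The plan is to specialize the bound of Theorem \ref{thm:main} to the tuple $\mathbf{k}=(\underbrace{1,\ldots,1}_{m},k)$ and then bound the resulting sum by a simple closed-form expression.

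First I would unpack the sum in Theorem \ref{thm:main} for this particular $\mathbf{k}$. Since $\min(2d,k_i)=1$ for $i=1,\ldots,m$ (as $k_i=1$ and $d\geq 2$), each of the first $m$ coordinates of $\mathbf{l}=(\ell_1,\ldots,\ell_{m+1})$ is forced to equal $1$, and $p(1,1)=1$. Only the last coordinate $\ell := \ell_{m+1}$ is free, and by the hypothesis $2d\leq k$ it ranges over $1\leq\ell\leq 2d$. Consequently $p(\mathbf{k},\mathbf{l})=p(k,\ell)$ and $|\mathbf{l}|=m+\ell$, so the bound in Theorem \ref{thm:main} collapses to
\[
b(V/\mathfrak{S}_{\mathbf{k}},\F)\ \leq\ \sum_{1\leq \ell \leq 2d} p(k,\ell)\, d(2d-1)^{m+\ell+1},
\]
which is the first displayed inequality of the corollary.

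Next I would estimate this sum asymptotically. Using the crude estimate $p(k,\ell)\leq \binom{k}{\ell}\leq k^{\ell}$ (which is sufficient, though tighter bounds via Erd\H{o}s--Lehner as in Remark \ref{rem:partition-function} are available), and factoring out $d(2d-1)^{m+1}$, I obtain
\[
\sum_{1\leq \ell \leq 2d} p(k,\ell)\, d(2d-1)^{m+\ell+1} \ \leq\ d(2d-1)^{m+1}\sum_{\ell=1}^{2d} k^{\ell}(2d-1)^{\ell}.
\]
Since $2d\leq k$, the geometric-type sum on the right is dominated (up to a factor of $2d$) by its largest term $k^{2d}(2d-1)^{2d}$. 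Collecting factors, the total bound becomes $k^{2d}\cdot O(d)^{m+2d+1}$, as claimed.

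Finally, for the equivariant Betti number bound with $\F=\mathbb{Q}$, I invoke the chain of isomorphisms \eqref{eqn:iso}, which in characteristic zero gives $H^{\ast}_{\mathfrak{S}_{\mathbf{k}}}(V,\mathbb{Q})\cong H^{\ast}(V/\mathfrak{S}_{\mathbf{k}},\mathbb{Q})$, so that $b_{\mathfrak{S}_{\mathbf{k}}}(V,\mathbb{Q})=b(V/\mathfrak{S}_{\mathbf{k}},\mathbb{Q})$ and the same bound applies verbatim. There is no real obstacle here: the entire argument is a specialization of Theorem \ref{thm:main} followed by the estimate $p(k,\ell)\leq k^{\ell}$, so the only minor point to check is that the constants in the $O(d)^{m+2d+1}$ notation correctly absorb the factor $2d$ from the number of summands and the initial factor $d$, which they do.
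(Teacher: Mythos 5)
Your proposal is correct and follows exactly the route the paper intends: the paper states Corollary \ref{cor:main} as a direct specialization of Theorem \ref{thm:main} (the blocks of size $1$ force $\ell_i=1$, the hypothesis $2d\leq k$ makes $\min(2d,k)=2d$, and $p(k,\ell)\leq k^{\ell}$ gives the $k^{2d}\,O(d)^{m+2d+1}$ form), with the equivariant case for $\mathbb{Q}$-coefficients being immediate since $b_{\mathfrak{S}_{\mathbf{k}}}(V,\mathbb{Q})$ is by definition (and via \eqref{eqn:iso}) the Betti number of the quotient. No gaps; your bookkeeping of $p(\mathbf{k},\mathbf{l})$, $|\mathbf{l}|$, and the absorption of the factors $d$ and $2d$ into the $O(d)^{m+2d+1}$ term is accurate.
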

\begin{proof}
Since $\mathbf{k}= (\underbrace{1, \ldots 1}_{m} ,k)$ directly implies $\ell_1=\ldots=\ell_m=1$ the bound is immediate from \eqref{eqn:thm:main0}.   
\end{proof}

\begin{remark}
  \label{rem:poly-vs-exponential}Notice that for fixed $m$ and $d$ both
  bounds in Corollary \ref{cor:main} are polynomial in $k$ compared to the
  bounds in the inequalities \eqref{eqn:betti-bound-procesi} and
  \eqref{eqn:betti-bound-descent} above,  where the dependence on $k$ is singly
  exponential.
\end{remark}

More generally, for symmetric \emph{semi-algebraic} sets we have the
following two theorems (for $\mathcal{P}$-closed semi-algebraic and
$\mathcal{P}$-semi-algebraic sets, respectively).
\begin{notation}
  \label{not:fkd} Let $\mathbf{k}= (k_{1} , \ldots ,k_{\omega}) \in
  \Z_{>0}^{\omega}$, with $k= \sum_{i=1}^{\omega} k_{i}$,  and $d \geq 1$.
  We denote
  \begin{eqnarray*}
    F (\mathbf{k},d) & = & \sum_{\substack{\boldsymbol{\ell}= (\ell_{1} , \ldots ,
    \ell_{\omega}), \\1 \leq \ell_{i} \leq \min (k_{i},2d)}} p (
    \mathbf{k},\boldsymbol{\ell})  d (2d-1)^{| \boldsymbol{\ell} | +1} .
  \end{eqnarray*}
\end{notation}

\begin{theorem}
  \label{thm:main-sa-closed}Let $\mathbf{k}= (k_{1} , \ldots ,k_{\omega})
  \in \Z_{>0}^{\omega}$, with $k= \sum_{i=1}^{\omega} k_{i}$, and let
  $\mathcal{P} \subset \R [ \X^{(1)} , \ldots
  ,\X^{(\omega)} ]$ be a finite set of polynomials,
  where each $\X^{(i)}$ is a block of $k^{(i)}$
  variables, and such that each $P \in \mathcal{P}$ is symmetric in each block
  of variables $\X^{(i)}$. Let $S \subset \R^{k}$
  be a $\mathcal{P}$-closed-semi-algebraic set. Suppose that $\deg (P)  
  \leq  d$ for each $P  \in  \mathcal{P}$, $\card (
  \mathcal{P}) =s$, and let $D=D (\mathbf{k},d) = \sum_{i=1}^{\omega} \min
  (k_{i} ,5d)$. Then, for any field of coefficients $\F$,
  \begin{eqnarray*}
    b (S/\mathfrak{S}_{\mathbf{k}} ,\F) & \leq & \sum_{i=0}^{D-1}
    \sum_{j=1}^{D-i} \binom{2 s+1}{j} 6^{j} F (\mathbf{k},2d)
  \end{eqnarray*}
  (where $F$ is as in Notation \ref{not:fkd}), and moreover
  \begin{eqnarray*}
    b_{i} (S/\mathfrak{S}_{\mathbf{k}} ,\F) & = & 0  ,
  \end{eqnarray*}
  for $i  \geq  D$.
\end{theorem}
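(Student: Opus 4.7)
The plan is to derive Theorem \ref{thm:main-sa-closed} from Theorem \ref{thm:main} by adapting, to the equivariant setting, the classical Mayer--Vietoris / inclusion--exclusion reduction from $\mathcal{P}$-closed semi-algebraic sets to real algebraic varieties that is used to deduce Theorem \ref{thm:betti-bound-sa-general} from Theorem \ref{thm:betti-bound-algebraic} in \cite{BPRbook2}. The key observation enabling the adaptation is that since every $P \in \mathcal{P}$ is symmetric in each block $\ensuremath{\boldsymbol{X}}^{(i)}$, the realization of any sign condition on $\mathcal{P}$ is $\mathfrak{S}_{\mathbf{k}}$-invariant, so $S$ decomposes as a union of $\mathfrak{S}_{\mathbf{k}}$-invariant basic closed pieces, each built from at most $2s$ atoms of the form $\{Q \geq 0\}$ with $Q = \pm P$ for some $P \in \mathcal{P}$, of degree $\leq d$ and symmetric in every block.

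I would then apply the equivariant Mayer--Vietoris spectral sequence attached to this invariant closed cover. Over any field $\F$ one has
\[
E_1^{p,q} = \bigoplus_{|I|=p+1} H^q_{\mathfrak{S}_{\mathbf{k}}}(W_I, \F) \;\Longrightarrow\; H^{p+q}_{\mathfrak{S}_{\mathbf{k}}}(S, \F),
\]
and when $|\mathfrak{S}_{\mathbf{k}}|$ is invertible in $\F$ one may equivalently pass to $\mathfrak{S}_{\mathbf{k}}$-invariants termwise in the ordinary Mayer--Vietoris double complex via the isomorphisms \eqref{eqn:iso}. Each multiple intersection $W_I$ is itself an invariant basic closed set defined by at most $j = |I|$ symmetric inequalities of degree $\leq d$, and summing over $I$ while tracking the three-fold refinement $\{<,=,>\}$ per atom yields the combinatorial prefactor $\sum_{i=0}^{D-1}\sum_{j=1}^{D-i} \binom{2s}{j} 6^j$ exactly as in the non-equivariant argument.

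The remaining task is to bound the equivariant Betti numbers of each $W_I = \{Q_1 \geq 0,\ldots,Q_j \geq 0\}$. For this I would replace $W_I$ by the zero set of a single non-negative symmetric polynomial of degree at most $2d$ whose equivariant Betti numbers dominate those of $W_I$. A clean implementation uses non-archimedean infinitesimals $0 < \varepsilon_1 \ll \cdots \ll \varepsilon_j \ll 1$ combined with an equivariant sum-of-squares construction that preserves symmetry in each block while doubling the degree at most once. Theorem \ref{thm:main} applied to this replacement gives the factor $F(\mathbf{k}, 2d)$ and cohomological vanishing above $\sum_i \min(k_i, 4d)$. Combining this $q$-direction vanishing with the at most $\sum_i \min(k_i, d)$ nonvanishing $p$-terms contributed by the Mayer--Vietoris spectral sequence then produces the vanishing threshold $D = \sum_i \min(k_i, 5d)$.

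The principal obstacle lies in the equivariant replacement step: producing a symmetric polynomial of the sharp degree $2d$ whose variety captures the equivariant cohomology of $W_I$ \emph{without} introducing auxiliary variables on which $\mathfrak{S}_{\mathbf{k}}$ acts trivially, since any such extra variables would degrade the partition-counting factors $p(\mathbf{k}, \mathbf{l})$ inside $F(\mathbf{k}, 2d)$ and spoil the vanishing threshold. In the non-equivariant case such auxiliary variables are harmlessly absorbed into the constants; here, maintaining the sharp equivariant dimension bound demands a careful equivariant perturbation argument in the spirit of \cite{BPRbook2}, and that is where the main technical difficulty will concentrate.
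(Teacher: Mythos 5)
Your overall strategy (reduce to Theorem \ref{thm:main} via equivariant Mayer--Vietoris inequalities and infinitesimal perturbation) is the same as the paper's, but your sketch has gaps at exactly the places where the paper does the real work. The step you yourself flag as the ``principal obstacle'' is indeed not available as stated: there is no replacement of a basic closed intersection $W_I=\{Q_1\geq 0,\ldots,Q_j\geq 0\}$ by the zero set of a single symmetric polynomial of degree $2d$ whose equivariant Betti numbers dominate those of $W_I$, and the paper never performs such a step. Instead it refines each $P_i$ into the five conditions $P_i=0$, $P_i=\pm\delta_i$, $P_i\geq 2\delta_i$, $P_i\leq -2\delta_i$ (Proposition \ref{7:prop:closed-with-parameters}, following Proposition 7.39 of \cite{BPRbook2}), and then applies the Mayer--Vietoris inequalities of Proposition \ref{7:prop:prop1} in both directions (Lemmas \ref{7:lem:union1} and \ref{7:lem:union2}), so that Theorem \ref{thm:main} is only ever invoked on genuine zero sets of sums of squares such as $(P_i+\sigma\delta_i)^2$, of degree $2d$. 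This is also where the factors $6^j$ and $\binom{2s}{j}$ actually come from (the $2s$ reflecting the $\pm\eps_i$ doubling used for general position); they would not emerge from a Mayer--Vietoris spectral sequence over the DNF cover by basic closed pieces, whose index set is not of size $2s$.

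Second, your claim that the spectral sequence contributes at most $\sum_i\min(k_i,d)$ nonvanishing $p$-terms is unsupported in your setup: intersections of arbitrarily many closed sets of the form $\{\pm P\geq 0\}$ need not be empty. The paper obtains this truncation from a general position statement (Lemma \ref{lem:gen-pos1-with-parameters}) that is special to the symmetric situation: a polynomial of degree at most $d$, symmetric in a block of $k_i$ variables, is a polynomial in the first $\min(k_i,d)$ power sums of that block, so after the $\pm\eps_i$ perturbation no more than $D'=\sum_i\min(k_i,d)$ of the polynomials can vanish simultaneously. Finally, even granting both points, the combination you describe only yields vanishing of $b_i(S/\mathfrak{S}_{\mathbf{k}},\F)$ for $i\geq D'+D''$ with $D''=\sum_i\min(k_i,4d)$, which is weaker than the claimed threshold $D=\sum_i\min(k_i,5d)$ whenever some $k_i<5d$; the paper closes this remaining gap by a separate argument, projecting onto the blocks with $k_i\leq 4d$ and using the Leray spectral sequence of that projection.
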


\begin{remark}
  In the particular case, when $\omega =1$, $d=O (1)$, and $k \gg 1$,  $D = \min(k,5d) = 5d$, 
  $F(\kk,2d) = (O(k))^{4d-1}$ (using the definition given in Notation \ref{not:fkd} and Remark \ref{rem:partition-function}),  and
  the bound in Theorem \ref{thm:main-sa-closed} takes the following asymptotic
  form:
  \begin{eqnarray*}
    b (S/\mathfrak{S}_{k} ,\F) & \leq & 
    s^{5d-1} (O(k))^{4d-1} .
  \end{eqnarray*}
\end{remark}

For general $\mathcal{P}$-semi-algebraic sets we have:

\begin{theorem}
  \label{thm:main-sa}Let $\mathbf{k}= (k_{1} , \ldots ,k_{\omega}) \in
  \Z_{>0}^{\omega}$, with $k= \sum_{i=1}^{\omega} k_{i}$, and let
  $\mathcal{P} \subset \R [ \X^{(1)} , \ldots
  ,\X^{(\omega)} ]$ be a finite set of polynomials,
  where each $\X^{(i)}$ is a block of $k^{(i)}$
  variables, and such that each $P \in \mathcal{P}$ is symmetric in each block
  of variables $\X^{(i)}$. Let $S \subset \R^{k}$
  be a $\mathcal{P}$-semi-algebraic set. Suppose that $\deg (P)   \leq  d$
  for each $P  \in  \mathcal{P}$, $\card (
  \mathcal{P}) =s$ and let $D=D (\mathbf{k},d) = \sum_{i=1}^{\omega} \min (
  k_{i} ,5d)$. Then, for any field of coefficients $\F$, 
  \begin{eqnarray*}
    b(S/\mathfrak{S}_{k} ,\F) & \leq & \sum_{i=0}^{D-1}
    \sum_{j=1}^{D-i} \binom{8 (k+1) (s+1)}{j} 6^{j} F (\mathbf{k},2d),
  \end{eqnarray*}
  and
  \begin{eqnarray*}
    b_{i} (S/\mathfrak{S}_{k} ,\F) & = & 0  ,
  \end{eqnarray*}
  for $i  \geq  D$.
\end{theorem}

\begin{remark}
  In the particular case, when $\omega =1$, $d=O (1)$,  and $k \gg 1$, the bound in Theorem
  \ref{thm:main-sa} takes the following asymptotic  form.
  \begin{eqnarray*}
    b (S/\mathfrak{S}_{k} ,\F) & \leq & s^{5d} k^{O (d)} .
  \end{eqnarray*}
\end{remark}

\begin{remark}(Tightness)
Example \ref{ex:zero-dimensional} shows that the sum of the equivariant Betti numbers of 
a symmetric real algebraic set $V\subset \R^k$, defined by symmetric polynomials of degree
at most $d$ could be as large as $k^{\Theta(d)}$. It is not too difficult to also to show that 
in the case of a symmetric $\mathcal{P}$-semi-algebraic set, the dependence on 
$s = \mathrm{card}(\mathcal{P})$ can be of the order of $s^{\Theta(d)}$
where $d = \max_{P\in \mathcal{P}} \mathrm{deg}(P)$. 

To see this consider the semi-algebraic set  $\psi_{k,d}(\R^k)$, where $\psi_{k,d} = \pi_d \circ \psi_k$, and 
$\psi_k$ is defined in Notation \ref{not:elementary-symmetric} and $\pi_d$ is the projection to the first $d$ coordinates.
Since  $\psi_k(\R^k)$ has dimension $k$ (using Proposition \ref{prop:quotient-sa} with $V = \R^k$), $\psi_{k,d}(\R^k)$
is of dimension $d$, and thus has non-empty interior. Let $z = (z_1,\ldots,z_d) \in \R^d$ belong to the interior of 
$\psi_{k,d}(\R^k)$. Then, it is easy to see that there exists a set  
$\widetilde{\mathcal{P}} \subset \R[Z_1,\ldots,Z_d]$ of $s$ linear polynomials,
such that in a closed ball 
\[
\widetilde{B} = \overline{B_d(z,\eps)} \subset \psi_{k,d}(\R^k)    \mbox{ (cf. Notation \ref{not:ball})},
\]
with $\eps >0$ and small enough,
$$\displaylines{
\widetilde{S} := \widetilde{B}  \setminus \bigcup_{P \in \widetilde{P}} \ZZ(P,\R^d)
}
$$
has $(\Omega(s))^d$ connected components. 
It is then clear that defining 
$$\displaylines{
\mathcal{P} = \bigcup_{\widetilde{P} \in \widetilde{\mathcal{P}}} \{\widetilde{P}(p_1^{(k)},\ldots,p_d^{(k)}) \},
}
$$
the symmetric semi-algebraic set
$$\displaylines{
S = B \setminus (\bigcup_{P \in \mathcal{P}}  \ZZ(P,\R^k)),
}
$$
where $B$ is defined by
\[
\sum_{i=1}^{d} (p_i^{(k)} - z_i)^2 - \eps  \leq 0,
\] 
has the property that,
\[
\psi_{k}(S) = \pi_d^{-1}(\widetilde{S}) \cap \psi_{k}(\R^k),
\]
and hence using Proposition \ref{prop:quotient-sa} that,
\[
b_0(S/\mathfrak{S}_k,\F) \geq b_0(\widetilde{S},\F) =  \Omega(s)^d
\]
(actually, the first inequality is an equality, but we do not need this fact for the lower bound).
 
Notice that  $S$ is a $\mathcal{P}'$-semi-algebraic set where 
\[
\mathcal{P}' = \mathcal{P} \cup  \{\sum_{i=1}^{d} (p_i^{(k)} - z_i)^2 - \eps\},
\] 
and hence $\card(\mathcal{P}') = s+1$, and the maximum
degree of the polynomials in $\mathcal{P}'$ is bounded by $2d$.

Hence, the bounds in 
Theorems \ref{thm:main}, \ref{thm:main-sa-closed} and \ref{thm:main-sa} are asymptotically
tight for fixed $d$ and $s,k$ large.
\end{remark}

\subsection{An application in a non-equivariant
setting}
\label{subsec:non-equivariant} 
As an application of Theorems
\ref{thm:main} and Theorem \ref{thm:main-sa-closed}, we obtain an
improvement in certain situations of a result of Gabrielov, Vorobjov and Zell
{\cite{GVZ04}} bounding the Betti numbers of a semi-algebraic set described as
the projection of another semi-algebraic set in terms of the description
complexity of the pre-image. 
This improvement is relevant for bounding the Betti numbers of the images of general (not necessarily symmetric) semi-algebraic sets under certain proper maps, and thus is an application of the main results of this paper in a non-equivariant setting.

Let $\mathcal{P} \subset \R [ Y_{1} , \ldots ,Y_{m} ,X_{1} , \ldots ,X_{k}
]$ be a family of polynomials and with $\deg (P) \leq d,P \in \mathcal{P}$,
$\card (\mathcal{P}) =s$. Let $\pi : \R^{m+k}
\rightarrow \R^{m}$ be the projection map to the first $m$ co-ordinates,
and let $S$ be a bounded $\mathcal{P}$-closed semi-algebraic set. We
consider the problem of bounding the Betti numbers of the image $\pi (S)$.
There are two different approaches. One can first obtain a semi-algebraic
description of the image $\pi (S)$ with bounds on the degrees and the number
of polynomials appearing in this description and then apply known bounds on
the Betti numbers of semi-algebraic sets in terms of these parameters. Another
approach is to use the ``descent spectral sequence'' of the map $\pi |_{S}  
$which abuts to the cohomology of $\pi (S)$, and bound the Betti numbers of
$\pi (S)$ by bounding the dimensions of the $E^{1}$-terms of this spectral
sequence. 
For this approach it is 
important
that 
the map $\pi$ is proper (which is ensured by requiring that the set $S$ is closed and bounded)
since in the general case the spectral sequence might not converge to $\HH^*(S,\F)$. 
The second approach produces a slightly better bound. The following
theorem whose proof uses the second approach appears in {\cite{GVZ04}}.

\begin{theorem}
  \label{thm:descent-quantitative} 
  Let  
$S\subset\R^{m+k}$
be a closed and bounded semi-algebraic set. Then with the same notation as above,
  \begin{eqnarray*}
    b (\pi (S) ,\F) & = & (O (s d))^{(k+1) m} .
  \end{eqnarray*}
\end{theorem}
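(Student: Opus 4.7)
The plan is to follow the approach of \cite{GVZ04}: construct a \emph{descent spectral sequence} associated to the restriction of $\pi$ to $S$ and bound the dimensions of its $E_1$-terms. Since $S$ is closed and bounded and $\pi$ is continuous, this restriction $\pi_S : S \to T$, with $T := \pi(S)$, is a proper surjection. For such a proper surjective map there is a first-quadrant spectral sequence
\[
E_1^{p,q} = \HH^q(W_{p+1}, \F) \;\Longrightarrow\; \HH^{p+q}(T, \F),
\]
where $W_{p+1} = \underbrace{S \times_{\pi} S \times_{\pi} \cdots \times_{\pi} S}_{p+1 \text{ copies}}$ is the $(p+1)$-fold fiber product of $S$ over $\pi$. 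Since $E_\infty^{p,q}$ is a subquotient of $E_1^{p,q}$, it suffices to bound the dimensions of the $E_1$-terms.

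First I would give an explicit semi-algebraic description of each $W_{p+1}$. Introducing $p+1$ disjoint blocks of $k$ variables $\mathbf{X}^{(1)}, \ldots, \mathbf{X}^{(p+1)}$ and keeping the common block $\mathbf{Y}$ of $m$ variables, $W_{p+1}$ is a closed semi-algebraic subset of $\R^{m + (p+1)k}$ cut out by the family $\mathcal{P}^{(p+1)}$ consisting of all polynomials obtained by substituting $\mathbf{X}^{(j)}$ for $\mathbf{X}$ in each $P \in \mathcal{P}$, for $j = 1, \ldots, p+1$. This yields $(p+1)s$ polynomials of degree at most $d$, and applying Theorem \ref{thm:betti-bound-sa-general} gives
\[
b(W_{p+1}, \F) \leq \sum_{i=0}^{m + (p+1)k} \sum_{j=1}^{m + (p+1)k - i} \binom{(p+1)s}{j} 6^{j} d(2d-1)^{m + (p+1)k - 1} = (O((p+1)sd))^{m + (p+1)k}.
\]

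Next I would exploit the fact that $\dim T \leq m$, which forces $b_n(T, \F) = 0$ for $n > m$. Combined with the standard estimate $b_n(T, \F) \leq \sum_{p+q = n} b_q(W_{p+1}, \F)$ coming from the spectral sequence, this gives
\[
b(T, \F) \leq \sum_{p=0}^{m} \sum_{q \geq 0} b_q(W_{p+1}, \F) \leq \sum_{p=0}^{m} (O((p+1)sd))^{m + (p+1)k},
\]
and the maximum term, occurring at $p = m$, yields the claimed bound $(O(sd))^{(k+1)m}$ after absorbing polynomial factors in $m$ and $k$ into the constant inside the $O(\cdot)$.

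The main obstacle is the construction and convergence of the descent spectral sequence in the semi-algebraic setting: the properness of the restriction of $\pi$ to $S$, guaranteed by the hypothesis that $S$ is closed and bounded, is essential here, since without properness the $E_1$-terms built from the fiber products need not compute the cohomology of the image. Once this spectral sequence is in hand and its $E_1$-page is identified with the cohomology of the $W_{p+1}$'s, the remaining steps are direct applications of Theorem \ref{thm:betti-bound-sa-general} to the explicit semi-algebraic sets $W_{p+1}$ together with a summation over $p \leq m$.
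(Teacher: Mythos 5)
This theorem is not proved in the paper at all: it is quoted from \cite{GVZ04}, and the text preceding it sketches exactly the argument you give, so your proposal (descent spectral sequence of the proper map $\pi|_{S}$, identification of the $E_{1}$-terms with the cohomology of the fiber powers $W_{p+1}$, truncation at $p\leq m$ via $\dim \pi(S)\leq m$, and Theorem \ref{thm:betti-bound-sa-general} applied to each $W_{p+1}$) is essentially the same route, and it is sound. The one quibble is your last step: the factor $m+1$ in the base and the extra $+k$ in the exponent cannot literally be ``absorbed into the constant inside the $O(\cdot)$'' because they occur raised to the power $\Theta((k+1)m)$; what your computation actually yields is $(O((m+1)sd))^{(k+1)m+k}$, which is the level of precision at which the result of \cite{GVZ04} is being quoted here and is sufficient for every comparison the paper makes with it.
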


In the special case when $k=1$, Theorem \ref{thm:descent-quantitative} implies
that
\begin{eqnarray}
  b (\pi (S) ,\F) & = & (O (s d))^{2m} . 
  \label{eqn:projection-bound}
\end{eqnarray}
\begin{remark}
  Notice, that the coefficient $2$ in the exponent in the bound above is
  present even if one uses the first approach of using effective quantifier
  elimination. In this case, the exponent $2m$ occurs due to the fact that the
  sub-resultants (with respect to the variable $X_{1}$) of two polynomials
  $P_{1} ,P_{2} \in \mathcal{P}$ can have degree as large as $d (d-1) =O (
  d^{2})$ in the variables $Y_{1} , \ldots ,Y_{m}$, and moreover the $O (
  s^{2})$ such sub-resultants are used in the description of $\pi (S)$ (see
  for example the complexity analysis of Algorithm 14.1 in {\cite{BPRbook2}}).
  As a result the exponent in the bound on the Betti numbers of $\pi (S)$
  obtained through this method is again $2m$. Note that the squaring of the
  degree and the number of polynomials involved are responsible for the doubly
  exponential complexity of quantifier elimination in the first order theory
  of real closed fields -- and seems unavoidable if one wants to describe the
  image of a projection.
\end{remark}

As a consequence of the main result of this paper, we obtain the following
bound on the Betti numbers of the image under projection to one less
dimension of real algebraic varieties
(not necessarily symmetric).

\begin{theorem}
  \label{thm:descent2-quantitative}Let $P \in \R [ Y_{1} , \ldots ,Y_{m} ,X
  ]$ be a non-negative polynomial and with $\deg (P) \leq d$. Let $V= \ZZ
  \left(P, \R^{m+1} \right)$ be bounded, and $\pi : \R^{m} \times \R
  \rightarrow\R^{m}$ be the projection map to the first $m$ coordinates.
  For each $p  ,0 \leq p<m$, let $\mathbf{k}_{m,p} = (\underbrace{1, \ldots
  ,1}_{m} ,p)$. Then,
  \begin{eqnarray*}
    b (\pi (V) ,\F) & \leq & \sum_{0 \leq p<m} F (\mathbf{k}_{m,p}
    ,d) = 
    {m}^{2d} (O (d))^{m+2d+1}.
  \end{eqnarray*}
\end{theorem}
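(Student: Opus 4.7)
The plan is to combine the descent spectral sequence of Gabrielov--Vorobjov--Zell \cite{GVZ04} with the equivariant Betti number bound of Theorem \ref{thm:main}, exploiting the natural symmetric group action on the iterated fibered products of $\pi|_V$ in order to replace the crude ordinary Betti number bounds (which led to the exponential estimate in \eqref{eqn:betti-bound-descent}) by the polynomial-in-$k$ equivariant analogues coming from Theorem \ref{thm:main}.

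Because $P \ge 0$ and $V = \ZZ(P, \R^{m+1})$ is bounded, $V$ is closed and bounded, so $\pi|_V$ is proper. For each $p \ge 0$ I would form the $(p+1)$-fold fibered product
$$
W_p \;=\; \underbrace{V \times_\pi \cdots \times_\pi V}_{p+1} \;\subset\; \R^m \times \R^{p+1},
$$
and observe that $W_p = \{(\mathbf{y}, x_0, \ldots, x_p) : P(\mathbf{y}, x_j) = 0,\; 0 \le j \le p\}$. The non-negativity of $P$ lets one rewrite this as $W_p = \ZZ(Q_p, \R^{m+p+1})$ where
$$Q_p \;=\; \sum_{j=0}^{p} P(\mathbf{y}, x_j).$$
Crucially $Q_p$ is non-negative, has degree at most $d$, and is symmetric in the block $(x_0, \ldots, x_p)$ (and trivially in each of the $m$ one-variable blocks $y_i$). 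Hence $W_p$ is $\mathfrak{S}_{\mathbf{k}}$-invariant for $\mathbf{k} = (\underbrace{1, \ldots, 1}_m, p+1)$, and Theorem \ref{thm:main} applies directly to it, yielding $b(W_p / \mathfrak{S}_{\mathbf{k}}, \F) \le F(\mathbf{k}, d)$.

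Next I would invoke the proper descent spectral sequence abutting to $H^*(\pi(V), \F)$, whose $E_1$-page is $E_1^{p,q} = H^q(W_p, \F)$ with differentials given by the alternating sums of the natural coordinate-forgetting maps. The refinement that drives the whole improvement is this: the symmetric group $\mathfrak{S}_{p+1}$ acts on $W_p$ by permuting factors, and in characteristic zero the averaging argument encoded in the isomorphisms \eqref{eqn:iso1}--\eqref{eqn:iso} ensures that only the $\mathfrak{S}_{p+1}$-invariant part of $H^q(W_p, \F)$ can contribute to $H^*(\pi(V), \F)$. Combined with \eqref{eqn:iso}, this gives the equivariant variant of the descent inequality \eqref{eqn:descent},
$$
b_n(\pi(V), \F) \;\le\; \sum_{p+q=n} b_q\bigl(W_p / \mathfrak{S}_{p+1},\, \F\bigr).
$$
Since $\pi(V) \subset \R^m$ we have $b_n(\pi(V), \F) = 0$ for $n \ge m$, so the relevant range is $0 \le p \le m-1$. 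Summing and applying the bound on each level,
$$
b(\pi(V), \F) \;\le\; \sum_{p=0}^{m-1} F\bigl((\underbrace{1, \ldots, 1}_m, p+1), d\bigr),
$$
which after re-indexing is exactly $\sum_{0 \le p < m} F(\mathbf{k}_p, d)$. The closed-form asymptotic $(m/d)^{2d} (O(d))^{m+2d+1}$ is then extracted in the same way as in Corollary \ref{cor:main}, by inserting the Erd\H{o}s--Lehner estimate $p(k, \ell) = \Theta(k)^{\ell - 1}$ (Remark \ref{rem:partition-function}) into the defining sum of $F$ and bounding the resulting geometric series.

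The main obstacle is the middle step: one must verify carefully that the descent spectral sequence can be refined along the $\mathfrak{S}_{p+1}$-action so that only the invariant part of each $E_1^{p,q}$ contributes to the limit. Without this equivariant refinement the argument only recovers the singly exponential bound \eqref{eqn:betti-bound-descent}; with it, the polynomial behavior of Theorem \ref{thm:main} propagates all the way through, which is exactly the source of the improvement over Theorem \ref{thm:descent-quantitative} for fixed $d$ and large $m$.
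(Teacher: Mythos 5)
Your reduction is the right one, and the bookkeeping at the ends is exactly the paper's: rewriting the $(p+1)$-fold fibered power as $W_p=\ZZ\bigl(\sum_{j=0}^{p}P(\mathbf{y},x_j),\R^{m+p+1}\bigr)$, noting this polynomial is non-negative, of degree $\le d$, and symmetric in the block $(x_0,\ldots,x_p)$, applying Theorem \ref{thm:main}, and extracting the asymptotics via Erd\H{o}s--Lehner. But the step you yourself flag as ``the main obstacle'' is precisely where a proof is missing, and the averaging heuristic you offer does not close it. The isomorphisms \eqref{eqn:iso1}--\eqref{eqn:iso} concern a single group acting on a single space; they say nothing about which part of the $E_1$-page of the Gabrielov--Vorobjov--Zell descent spectral sequence survives to the abutment. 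Worse, the natural equivariant structure on that spectral sequence points the other way: in the join model underlying \eqref{eqn:descent}, $\mathfrak{S}_{p+1}$ permutes the barycentric coordinates of $\Delta_p$ as well as the factors of $W_p$, so it acts on the relative cohomology of the $p$-th filtration pair through the sign character on the top cell of the simplex. Passing to invariants there yields the \emph{alternating} part of $H^q(W_p,\F)$, not the invariant part -- this is the image-computing spectral sequence of Goryunov--Mond and Houston that the paper explicitly contrasts with its construction in Remark \ref{rem:symmetric-spectral-sequence} -- and the equivariant bounds of Theorem \ref{thm:main} control invariant (i.e.\ quotient) cohomology, not alternating cohomology. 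So ``only the $\mathfrak{S}_{p+1}$-invariant part contributes'' is not a refinement you can read off the GVZ sequence; as stated it is unsubstantiated, and the obvious equivariant refinement gives the wrong isotypic piece.

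What the paper does instead is build a different object whose filtration quotients are literally the symmetric fibered powers: the fibered symmetric join $J_{\pi,\mathrm{symm}}(V)$, with connecting maps that glue the diagonal walls of $\Delta_p$ (not just the zero faces) into the lower skeleton. That is why the pair at level $p$ becomes $(\mathbf{S}^p\times\mathrm{Sym}^{(p)}_{\pi}(V),\{\ast\}\times\mathrm{Sym}^{(p)}_{\pi}(V))$ (Lemma \ref{lem:quotient}) with no sign twist, while the homotopy equivalence $J_{\pi,\mathrm{symm}}(V)\simeq\pi(V)$ is proved by showing each fiber is an infinite symmetric join, hence contractible (Lemma \ref{lem:contractible-infinite-join}), and invoking Vietoris--Begle (Proposition \ref{prop:he}); properness, guaranteed by $V$ closed and bounded, is what makes this legitimate. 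This produces Theorem \ref{thm:symmetric-spectral-sequence} and Corollary \ref{cor:descent2}, i.e.\ exactly the inequality $b_n(\pi(V),\F)\le\sum_{p+q=n}b_q\bigl(W_p/\mathfrak{S}_{p+1},\F\bigr)$ you wanted; it is not obtained by ``taking invariants'' of the old spectral sequence but by constructing a new one. To repair your write-up you would need to supply this construction (or an equivalent argument); once Corollary \ref{cor:descent2} is in hand, the rest of your proposal coincides with the paper's proof.
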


Theorem \ref{thm:descent2-quantitative} 
yields better asymptotic bounds compared to
the bound in \eqref{eqn:projection-bound} above, when $d$ is held fixed, and
$m \rightarrow \infty$.

\subsection{Outline of the proofs of the main theorems}
\label{subsec:outline}
Most bounds on the
Betti numbers of real algebraic varieties are usually proved by first making a
deformation to a set defined by one inequality with smooth boundary and
non-degenerate critical points with respect to some affine function.
Furthermore, the new set is homotopy equivalent to the given variety and it
thus suffices to bound the Betti numbers of its boundary (up to a
multiplicative factor of $2$). Finally, the last step is accomplished by
bounding the number of critical points using the Bezout bound. The approach
used in this paper for bounding the equivariant Betti numbers is somewhat
similar. However, since the perturbation, as well as the Morse function both
need to be equivariant, the choices are more restrictive (see Proposition
\ref{prop:non-degenerate}). Additionally, the topological changes at the
Morse critical points need to be analyzed more carefully (see Lemmas
\ref{lem:equivariant_morseA} and \ref{lem:equivariant_morseB}). The main
technical tool that makes the good dependence on the degree $d$ of the
polynomial possible is the so called ``half-degree principle''
{\cite{Riener,Timofte03}} (see Lemma \ref{lem:half-degree} as well as
Proposition \ref{prop:half-degree}), and this is what we use rather than
the Bezout bound to bound the number of (orbits of) critical points. The
semi-algebraic case as usual provides certain additional obstacles. We adapt
the techniques developed in {\cite[Chapter 7]{BPRbook2}} to the equivariant
situation to reduce to the (equivariant) algebraic case. The main tool used
here are certain inequalities coming from the Mayer-Vietoris exact sequence.
Finally, for the proof of Theorem \ref{thm:descent2-quantitative} 
we extend to the equivariant setting the
descent spectral sequence defined in {\cite{GVZ04}}. The role of the fibered
join used in {\cite{GVZ04}} is now replaced by the fibered symmetric join (see
Theorem \ref{thm:symmetric-spectral-sequence}). We prove the necessary
topological properties of the symmetric join (see Lemma
\ref{lem:contractible-infinite-join}, Proposition \ref{prop:he} and Lemma
\ref{lem:quotient}). The proof of Theorem
\ref{thm:descent2-quantitative} 
then consists of applying Theorem  \ref{thm:main} 
to  bound the $E^{1}$-term of this new spectral sequence defined in Theorem
\ref{thm:symmetric-spectral-sequence}.

\section{Background and preliminaries}\label{sec:prelim}

In this section we recall some basic facts about real closed fields and real
closed extensions.

\subsection{Real closed extensions and Puiseux series}We will need some
properties of Puiseux series with coefficients in a real closed field. We
refer the reader to {\cite{BPRbook2}} for further details.

\begin{notation}
  For $\R$ a real closed field we denote by $\R \left\langle \eps
  \right\rangle$ the real closed field of algebraic Puiseux series in $\eps$
  with coefficients in $\R$. We use the notation $\R \left\langle \eps_{1} ,
  \ldots , \eps_{m} \right\rangle$ to denote the real closed field $\R
  \left\langle \eps_{1} \right\rangle \left\langle \eps_{2} \right\rangle
  \cdots \left\langle \eps_{m} \right\rangle$. Note that in the unique
  ordering of the field $\R \left\langle \eps_{1} , \ldots , \eps_{m}
  \right\rangle$, $0< \eps_{m} \ll \eps_{m-1} \ll \cdots \ll \eps_{1} \ll 1$.
\end{notation}

\begin{notation}
\label{not:lim}
  For elements $x \in \R \left\langle \eps \right\rangle$ which are bounded
  over $\R$ we denote by $\lim_{\eps}  x$ to be the image in $\R$ under the
  usual map that sets $\eps$ to $0$ in the Puiseux series $x$.
\end{notation}

\begin{notation}
\label{not:extension}
  If $\R'$ is a real closed extension of a real closed field $\R$, and $S
  \subset \R^{k}$ is a semi-algebraic set defined by a first-order formula
  with coefficients in $\R$, then we will denote by $\Ext(S, \R') \subset \R'^{k}$ the semi-algebraic subset of $\R'^{k}$ defined by
  the same formula. It is well-known that $\Ext(S, \R')$ does
  not depend on the choice of the formula defining $S$ {\cite{BPRbook2}}.
\end{notation}

\begin{notation}
\label{not:ball}
  For $x \in \R^{k}$ and $r \in \R$, $r>0$, we will denote by $B_{k} (x,r)$
  the open Euclidean ball centered at $x$ of radius $r$. If $\R'$ is a real
  closed extension of the real closed field $\R$ and when the context is
  clear, we will continue to denote by $B_{k} (x,r)$ the extension $\Ext(B_{k} (x,r) , \R')$. This should not cause any confusion.
\end{notation}

\subsection{Tarski-Seidenberg transfer principle}
In some proofs that involve
Morse theory (see for example the proof of Lemma \ref{lem:equivariant_morseB}), where integration of gradient flows is used in an essential way, we
first restrict to the case $\R =\mathbb{R}$. After having proved the result
over $\mathbb{R}$, we use the Tarski-Seidenberg transfer theorem to extend
the result to all real closed fields. We refer the reader to 
{\cite[Chapter 2]{BPRbook2}} for an exposition of the Tarski-Seidenberg transfer
principle.

\subsection{Mayer-Vietoris inequalities}
We will need the following inequalities. They
are consequences of  Mayer-Vietoris exact sequence.

Let $S_{1} , \ldots ,S_{s} \subset \R^{k}$, $s \ge 1$, be closed
semi-algebraic sets of $\R^{k}$, contained in a closed semi-algebraic set $T$.
For $1 \leq t \leq s$, we denote
\begin{eqnarray*}
S_{\le t} &=& \bigcap_{1 \leq j \leq t} S_{j}, \\
S^{\le t} &=& \bigcup_{1 \leq j \leq t} S_{j}.
\end{eqnarray*}
Also, for $J \subset \{1, \ldots ,s\}$, $J \neq \emptyset$, we denote
\begin{eqnarray*}
S_{J} &=& \bigcap_{j \in J} S_{j}, \\
S^{J} &=& \bigcup_{j \in J} S_{j}.
\end{eqnarray*}
Finally, we denote
\begin{eqnarray*}
S^{\emptyset} &=& T.
\end{eqnarray*}

\begin{proposition}
  \label{7:prop:prop1}
   \begin{enumerate}[A.]
    \item 
    \label{itemlabel:7:prop:prop1:1}
    For $i \geq 0$,
    
     \begin{equation}
     \label{7:eqn:prop1:1}
      b_{i} (S^{\le s} ,\F) \leq \sum_{j=1}^{i+1}
      \sum_{\substack{
        J \subset \{ 1, \ldots ,s \}\\
        \card (J) =j
        }}
       b_{i-j+1} (S_{J} ,\F) .
    \end{equation}
    
    \item 
    \label{itemlabel:7:prop:prop1:2}
    For $0 \le i \le k$,
    
     \begin{equation}
      \label{7:eqn:prop1} 
      b_{i} (S_{\le s} ,\F) \leq \sum_{j=1}^{k-i}
      \sum_{\substack{
        J \subset \{ 1, \ldots ,s \}\\
        \card (J) =j
        }}
        b_{i+j-1} (S^{J} ,\F) + \binom{s}{k-i} b_{k}
      (S^{\emptyset} ,\F) .
    \end{equation}
  \end{enumerate}
  \end{proposition}

\begin{proof} See 
\cite[Proposition 7.33]{BPRbook2}.
\end{proof}

We also record 
a special case of Part \eqref{itemlabel:7:prop:prop1:1} of Proposition \ref{7:prop:prop1} for future use.
If $s=2$, then inequality \eqref{7:eqn:prop1:1} gives

\begin{eqnarray}
\label{eqn:MV2}
  b_{i} (S_{1} \cup S_{2} ,\F) & \leq & b_{i} (S_{1}
    ,\F) +b_{i} (S_{2} ,\F)  +b_{i-1} (S_{1} \cap S_{2}
    ,\F). 
\end{eqnarray}

\section{Equivariant deformation\label{sec:deformation}}

In this section we define and prove properties of certain equivariant
deformations of symmetric real algebraic varieties that will be a key
ingredient in the proofs of the main theorems. These are adapted from the
non-equivariant case (see for example {\cite[\S 12.6]{BPRbook2}}), but keeping
everything equivariant requires additional effort.

\begin{notation}
  \label{not:def}For any $P \in \R [ X_{1} , \ldots ,X_{k} ]$ we denote
  \[ \Def (P, \zeta ,d) = P -  \zeta   \left(1+ \sum_{i=1}^{k} X_{i}^{d}
     \right) , \]

  where $\zeta$ is a new variable.
\end{notation}

Notice that if $P$ is symmetric in $X_{1} , \ldots ,X_{k}$, so is $\Def (P,
\zeta ,d)$.

\begin{proposition}
  \label{prop:alg-to-semialg}
  Let $\mathbf{k}= (k_{1} , \ldots ,k_{\omega}) \in \Z_{>0}^{\omega}$, with $k= \sum_{i=1}^{\omega} k_{i}$, 
  and $P \in \R [ \X^{(1)} , \ldots ,\X^{(\omega)} ]^{\mathfrak{S}_\kk}$, where each $\X^{(i)}$ is a
  block of $k_{i}$ variables, and such that $P$ is non-negative.
  Suppose also that $V = \ZZ(P, \R^{k})$ is bounded. The variety
  $\Ext(V, \R \langle \zeta \rangle^{k})$ is
  is semi-algebraically homotopy equivalent to the (symmetric) semi-algebraic subset $S$ of $\R
  \langle \zeta \rangle^{k}$
 consisting of the union of the semi-algebraically connected components of the semi-algebraic set  
  defined by the inequality 
  $\Def (P, \zeta ,d) \leq 0$
which are bounded over $\R$.
  Moreover, 
  $\phi_{\mathbf{k}} (\Ext(V, \R \langle \zeta \rangle^{k}))$
  is semi-algebraically homotopy  equivalent to $\phi_{\mathbf{k}} (S)$. 
\end{proposition}

\begin{proof} 
Let $V \subset B_k(\mathbf{0},R)$ for some $R \in \R, R>0$.
Let for $t \in \R$, $S_t \subset \R^k$ denote the set defined by 
\[
S_t = \{\x= (x_1,\ldots,x_k) \in B_k(\mathbf{0},2R) \;\mid\; P(\x) - t\sum_{i=1}^k x_i^d
  \leq 0\}.
\]

 Then, for all $0< t < t'$, $S_{t} \subset S_{t'}$.  Moreover, $V = \lim_\zeta S$ (cf. Notation \ref{not:lim}). 
 It then follows from \cite[Lemma 17.17]{BPRbook2} that
  $\Ext(V, \R \langle \zeta \rangle^{k})$ is semi-algebraically homotopy equivalent to $S$. 
  
The proof  that
  $\phi_{\mathbf{k}} (\Ext(V, \R \langle \zeta \rangle^{k}))$
  is semi-algebraically homotopy  equivalent to $\phi_{\mathbf{k}} (S)$ is similar and omitted.
\end{proof}

\begin{lemma}
\label{lem:critical}
Let $Q \in \R[X_1,\ldots,X_k]$, and $F = e_1(X_1,\ldots,X_k) = \sum_{i=1}^k X_i$. Then, the critical points of $F$ restricted to
$V = \ZZ(Q,\R^k)$ are defined by the following set of polynomial equations:
\begin{eqnarray}
  Q & = & 0, \nonumber\\
  \dfrac{\partial Q}{\partial X_{1}} -
  \dfrac{\partial Q}{\partial X_{2}} & = & 0, 
  \label{eqn:lem:critical}
   \\
  \vdots & \vdots & \vdots \nonumber \\
  \dfrac{\partial Q}{\partial X_{1}} - \dfrac{\partial Q}{\partial X_{k}} & = & 0. \nonumber
\end{eqnarray}
 \end{lemma}

\begin{proof}
Let $\mathbf{f}_1,\ldots,\mathbf{f}_k$ be the standard basis of $\R^k$ with coordinates
$X_1,\ldots,X_k$. Let $\mathbf{f}_1',\ldots, \mathbf{f}_k'$ be a new basis defined by
\begin{eqnarray*}
\mathbf{f}_1' &=& \sum_{i=1}^k \mathbf{f}_i, \\
\mathbf{f}_2'  &=& \mathbf{f}_1 - \mathbf{f}_2, \\
\vdots &\vdots& \vdots \\
\mathbf{f}_k'  &=& \mathbf{f}_1 - \mathbf{f}_k.
\end{eqnarray*}

Notice that, $\mathbf{f}_1'$ is orthogonal to $\spanof(\mathbf{f}_2',\ldots,\mathbf{f}_k')$, and thus
$\mathbf{f}_2', \ldots,\mathbf{f}_k'$ is a basis of $W =\spanof(\mathbf{f}_1')^{\perp}$. 
The set of critical points of $F$ restricted to  $V$ is the set of points $\x \in V$ where 
\[
\grad(F)(\x) = \sum_{i=1}^k \frac{\partial Q}{\partial X_i}(\x) \mathbf{f}_i
\]
is orthogonal to $W$, or equivalently where
$\grad(F)(\x)$ is orthogonal to each vector $\mathbf{f}_2',\ldots,\mathbf{f}_k'$, since 
$\mathbf{f}_2',\ldots,\mathbf{f}_k'$ span $W$. Thus, the set of critical points of $F$ restricted to  $V$ is defined by
\eqref{eqn:lem:critical}.
\end{proof}

\begin{proposition}
  \label{prop:non-degenerate}Let $P \in \R [ X_{1} , \ldots ,X_{k} ]  $, and
  $d$ be an even number with $\deg (P) \leq d=p+1$, with $p$ a prime. Let
  $F=e_{1} (X_{1} , \ldots ,X_{k})$ where $e_{1}$ denotes the first
  elementary symmetric polynomial. Let 
  \[
  V_{\zeta} = \ZZ \left(\Def (P,\zeta ,d) , \R \langle \zeta \rangle^{k} \right).
  \] 
  Suppose also that $\gcd (p,k) =1$. Then, the critical points of $F$ restricted to $V_{\zeta}$ are
  finite in number, and each critical point is non-degenerate.
\end{proposition}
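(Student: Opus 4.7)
The plan is as follows. First, the Lagrange multiplier conditions for $x \in V_\zeta$ to be a critical point of $F\vert_{V_\zeta}$ assert the existence of $\mu \in \R\langle\zeta\rangle$ with
\begin{equation*}
\partial_i P(x) - \zeta\, d\, x_i^{\,d-1} \;=\; \mu \quad (i = 1, \ldots, k), \qquad P(x) \;=\; \zeta\Bigl(1 + \textstyle\sum_j x_j^{\,d}\Bigr).
\end{equation*}
Eliminating $\mu$ via the $k-1$ gradient-difference equations $\partial_i P - \partial_1 P = \zeta\, d\,(X_i^{p} - X_1^{p})$ yields a system of $k$ polynomial equations in the $k$ unknowns $x_1, \ldots, x_k$, of respective degrees $p, p, \ldots, p, p+1$.

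For finiteness, I would bound the number of affine solutions by projective Bezout, obtaining at most $p^{k-1}(p+1)$ provided there are no common solutions at the hyperplane at infinity in $\mathbb{P}^{k-1}$ over $\overline{\R\langle\zeta\rangle}$. Analyzing the top-degree homogeneous components in the $X$-variables, and absorbing the contribution from the degree-$d$ part of $P$ into a lower-order perturbation (using that $\zeta$ is transcendental over $\R$), the leading constraints reduce to $X_i^{p} = X_1^{p}$ for all $i$ and $\sum_i X_i^{p+1} = 0$. Writing $X_i = \omega_i X_1$ with $\omega_i^p = 1$ and $\omega_1 = 1$, and excluding the trivial case $X_1 = 0$, the latter becomes $\sum_i \omega_i = 0$ in the cyclotomic field $\Q(\omega)$ with $\omega$ a primitive $p$-th root of unity. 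Since $p$ is prime, $\{1, \omega, \ldots, \omega^{p-2}\}$ is a $\Q$-basis of $\Q(\omega)$, which forces the multiplicities $n_j := \#\{i : \omega_i = \omega^j\}$ to all coincide, so $p \mid k$, contradicting $\gcd(p,k) = 1$. Hence the system has no solutions at infinity and the affine critical locus is zero-dimensional.

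For non-degeneracy at each critical point $x$, I would apply the bordered Hessian criterion: writing $g := \Def(P, \zeta, d)$, the critical point is non-degenerate iff
\begin{equation*}
\det\!\begin{pmatrix} 0 & \nabla g(x)^{\top} \\ \nabla g(x) & \mathrm{Hess}\,g(x) \end{pmatrix} \neq 0,
\end{equation*}
where $\mathrm{Hess}\,g = \mathrm{Hess}\,P - \zeta\, d(d-1)\,\mathrm{diag}(x_i^{\,d-2})$. Invoking the Tarski--Seidenberg transfer principle, I reduce to the case $\R = \mathbb{R}$, where a standard Bertini/Sard argument applied to the real algebraic family $(V_\zeta)_{\zeta \in \mathbb{R}}$ and to the restriction of $F$ shows that the set of parameter values for which some critical point becomes degenerate is a proper algebraic subset of $\R$. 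As $\zeta$ is an infinitesimal transcendental over $\R$, it avoids this subset, and non-degeneracy holds uniformly.

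The principal difficulty I anticipate is the analysis at infinity: one must carefully track the top-degree homogeneous components contributed by the degree-$d$ part of $P$ itself, and confirm that the combined constraints arising from the leading $\zeta$-coefficients are genuinely ruled out by the cyclotomic rigidity provided by $\gcd(p, k) = 1$ and the primality of $p$. Once this is in place, the non-degeneracy step is a routine parameter-genericity application.
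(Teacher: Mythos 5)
The central gap is in your non-degeneracy step. You dispose of it with ``a standard Bertini/Sard argument applied to the real algebraic family $(V_t)_{t}$'' showing that the set of parameters for which some critical point of $F|_{V_t}$ is degenerate is a \emph{proper} algebraic subset of the line. Constructibility of this bad set is clear, but properness is exactly the nontrivial content of the proposition, and it does not follow from genericity alone: the pencil $P - t\bigl(1+\sum_i X_i^d\bigr)$ is a fixed, special family, not a generic deformation, and for an arbitrary symmetric $P$ nothing in a Sard-type argument excludes the possibility that every member of the pencil has a degenerate critical point. Tellingly, your non-degeneracy step makes no use of the hypotheses $d=p+1$, $p$ prime, $\gcd(p,k)=1$, yet these are precisely what make the statement true. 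The paper supplies the missing ingredient by exhibiting one good parameter, namely the member ``$\zeta=\infty$'' of the bi-homogenized family over $\PP^1$: there the system collapses to $X_0^d+\sum_i X_i^d=0$, $X_i^{d-1}=X_1^{d-1}$, so every solution satisfies $x_i=\omega_i x_1$ with $\omega_i^p=1$, and the Hessian in the directions $e_1-e_i$ is $x_1^{d-2}\,\mathbf{1}\mathbf{1}^{T}+\mathrm{diag}\bigl(x_i^{d-2}\bigr)$, whose determinant is a nonzero multiple of $1+\sum_{i\ge 2}\omega_i^{d-2}$; primality of $p$ together with $p \nmid k$ forces this to be nonzero. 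Hence the degenerate locus projects to a Zariski-closed subset of $\PP^1$ missing $(0:1)$, so it is finite and the infinitesimal $\zeta$ avoids it. Some computation of this kind is indispensable for your properness claim; note also that once non-degeneracy is known, finiteness is automatic (non-degenerate critical points are isolated and the critical set is algebraic), so your separate Bezout count is not needed.

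A secondary problem is the justification of your ``no solutions at infinity'' reduction. With $\zeta$ infinitesimal, the degree-$d$ part $P_d$ of $P$ is \emph{not} a lower-order perturbation of the term $\zeta\sum_i X_i^d$: in the natural valuation the real-coefficient form $P_d$ dominates, and reducing modulo the valuation ideal only yields $P_d(\bar x)=0$, $(\partial_i-\partial_1)P_d(\bar x)=0$, which is no contradiction for a general $P$. To make the step correct you must pass to the opposite place $\zeta\to\infty$ (equivalently, note that the leading $\zeta$-coefficient of the relevant eliminant is the resultant of the pure Fermat system, nonzero by the same cyclotomic argument), since all coordinates of such solutions are algebraic over $\R(\zeta)$. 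In other words, the cyclotomic rigidity you invoke is the right tool, but it has to be applied at the $\zeta=\infty$ degeneration --- both for the analysis at infinity and, more importantly, for the non-degeneracy statement itself, which is how the paper's proof proceeds.
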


\begin{proof} 
Using Lemma \ref{lem:critical} with $Q = \Def (P, \zeta ,d)$, we obtain that
the critical points of $F$ restricted to
$V_{\zeta}$ are contained in the set of solutions in $\PP_{\C\la\zeta\ra}^{k}$ of the
following system of homogeneous equations.
\begin{eqnarray}
  \Def (P, \zeta ,d)^{h} & = & 0, \nonumber\\
  \dfrac{\partial \Def (P, \zeta ,d)^{h}}{\partial X_{1}} -
  \dfrac{\partial \Def (P, \zeta ,d)^{h}}{\partial X_{2}} & = & 0, 
  \label{eqn:special1} \\
  \vdots & \vdots & \vdots \nonumber\\
  \dfrac{\partial \Def (P, \zeta ,d)^{h}}{\partial X_{1}} - \dfrac{\partial
  \Def (P, \zeta ,d)^{h}}{\partial X_{k}} & = & 0. \nonumber
\end{eqnarray}

A critical point $x= (x_{1} , \ldots ,x_{k}) \in \R \langle \zeta
\rangle^{k}$ is non-degenerate if and only if the determinant of the Hessian
matrix, $\Hess (x)$, which is an $(k-1) \times (k-1)$ matrix
defined by
\begin{eqnarray*}
  \Hess (x)_{i,j} & = & (\partial_{1} - \partial_{i}) \circ (
  \partial_{1} - \partial_{j})   \Def (P, \zeta ,d) ,
\end{eqnarray*}
(where $\partial_{i} = \dfrac{\partial}{\partial X_{i}}$) is non-zero. In
particular, being non-degenerate implies that a critical point is isolated.

Let $H(P,\zeta,d)$ be defined by 
\[
H(P,\zeta,d) = \det \left( ((\partial_{1} - \partial_{i}) \circ (
  \partial_{1} - \partial_{j})   \Def(P, \zeta ,d)^h)_{2 \leq i, \leq k} \right).
  \]
  
Thus, in order to prove the proposition, it suffices to prove that at each
solution $\bar{x} = (x_{0} :x_{1} : \cdots :x_{k})  $  of the homogeneous
system \eqref{eqn:special1}, 
$H(P,\zeta,d)(x_0:\cdots:x_k) \neq 0$.

Let $\overline{\Def} (P,S_{0} ,S_{1} ,d)^{h}$ 
(resp. $\overline{H}(P,S_0,S_1,d)$)
be the polynomial
obtained from $\Def (P, \zeta ,d)^{h}$ (resp. $H(P,\zeta,d)$)  by first replacing $\zeta$ by
$S_{1}$ and then homogenizing with respect to $S_{1}$, and consider now the
bi-homogeneous system
\begin{eqnarray}
\nonumber
  \overline{\Def} (P,S_{0} ,S_{1} ,d)^{h} & = & 0,\\
  \label{eqn:special1'}
  \dfrac{\partial \overline{\Def} (P,S_{0} ,S_{1} ,d)^{h}}{\partial X_{1}}
  - \dfrac{\partial \overline{\Def} (P,S_{0} ,S_{1} ,d)^{h}}{\partial X_{2}}
  & = & 0,\\
  \nonumber
  \vdots & \vdots & \vdots\\
  \nonumber
  \dfrac{\partial \overline{\Def} (P,S_{0} ,S_{1} ,d)^{h}}{\partial X_{1}}
  - \dfrac{\partial \overline{\Def} (P,S_{0} ,S_{1} ,d)^{h}}{\partial X_{k}}
  & = & 0.
\end{eqnarray}
The set of solutions $(\bar{s} ; \bar{x}) = ((s_{0} :s_{1}) ; (x_{0}
:x_{1} : \cdots :x_{k})) \in \PP_{\C}^{1} \times \PP_{\C}^{k}$ of the above
bi-homogeneous system at which 
$\overline{H}(P,S_0,S_1,d)(\bar{s};\bar{x}) = 0$
is Zariski closed in $\PP_{\C}^{1} \times \PP_{\C}^{k} ,$ and hence, its
projection, $W$,  to $\PP_{\C}^{1}$ is also Zariski closed, and thus is either finite
or equal to $\PP_{\C}^{1}$. 

Note that $\PP_\C^1 \setminus W$, is precisely the set of points
$\bar{s} = (s_0:s_1) \in \PP_\C^1$, such that the polynomial $\overline{H}(P,S_0,S_1,d)(\bar{s};\cdot)$
does not vanish at any point satisfying the set of equations \eqref{eqn:special1'} with
$S_0=s_0,S_1=s_1$.
 
\noindent{\bf Claim:} $(0:1) \not\in W$, and therefore $W$ is finite.
Before we prove this claim below we finish the proof proposition based on this claim.
Since $W$ is finite, its complement, $\PP_\C^1 \setminus W$, contains an open
interval to the right of $0$ of the affine real line, and hence contains the
infinitesimal $\zeta$ after extending the field to $\R \langle \zeta \rangle$.
This implies that for every affine solution
$\bar{x} = (1:x_1:\cdots:x_k)$ of \eqref{eqn:special1}, 
\[
\overline{H}(P,S_0,S_1,d)((1:\zeta);1:x_1:\cdots:x_k) = \Hess(x_1,\ldots,x_k) \neq 0,
\]
and hence every critical point of $F$ restricted to $V$ is non-degenerate proving the proposition.

We now prove the claim that $(0:1) \not\in W$. We obtain after substituting 
$S_{0} =0,S_{1} =1$
in 
\eqref{eqn:special1'}
the following system
\begin{eqnarray}
  X_{0}^{d} + \sum_{i=1}^{k} X_{i}^{d} 
  & = & 0,
  \nonumber\\
  X_{1}^{d-1} -X_{2}^{d-1} & = & 0, \label{eqn:special2} \\
  \vdots & \vdots & \vdots \nonumber\\
  X_{1}^{d-1} -X_{k}^{d-1} & = & 0. \nonumber
\end{eqnarray}
Notice that for any solution $x = (x_{0} :x_{1} : \cdots :x_{k})$ to the
system of equations \eqref{eqn:special2} we must have that for $i=2, \ldots
,k$,
\begin{eqnarray}
  x_{i} & = & \omega_{i} x_{1,}  \label{eqn:xi}
\end{eqnarray}
where each $\omega_{i}$ is a $p$-th root of unity (note that $p=d-1$). 

Now,
$$\displaylines{
  \overline{H}(P,S_0,S_1,d)((0:1);\bar{x}) = \cr
  \left(\begin{array}{cccc}
    x_{1}^{d-2} +x_{2}^{d-2} & x_{1}^{d-2} & \cdots & x_{1}^{d-2}\\
    x_{1}^{d-2} & x_{1}^{d-2} +x_{3}^{d-2} & \cdots & x_{1}^{d-2}\\
    \vdots & \vdots & \ddots & \vdots\\
    x_{1}^{d-2} & x_{1}^{d-2} & \cdots & x_{1}^{d-2} +x_{k}^{d-2}
  \end{array}\right) .
}
$$
Noting that $x_{1} \neq 0$, and substituting for the various $x_{i} ,2 \leq i
\leq k$, using \eqref{eqn:xi}  we get that
\begin{eqnarray*}
   \overline{H}(P,S_0,S_1,d)((0:1);\bar{x})
  & = & x_{1}^{(d-2) (k-1)}
  \left(\prod_{i=2}^{k} \omega_{i}^{d-2} \right) \left(1+ \sum_{i=2}^{k}
  \omega_{i}^{d-2} \right) .
\end{eqnarray*}

Since $p$ is prime, the only integral relations between the $p$-th roots of
unity are integer multiples of the relation
\[ 1+ \omega + \cdots + \omega^{p-1} =0   , \]
where $\omega$ is a primitive $p$-th root of unity. Since, $p$ does not divide
$k$ by hypothesis, it follows that
\begin{eqnarray*}
  1+ \sum_{i=2}^{k} \omega_{i}^{d-2} & \neq & 0
\end{eqnarray*}
for any choice of the roots $\omega_{i}$. Hence, 
 $\overline{H}(P,S_0,S_1,d)((0:1);\bar{x}) \neq 0$.
This finishes the
proof.\end{proof}

\begin{lemma}
  \label{lem:half-degree}Let $\mathbf{k}= (k_{1} , \ldots ,k_{\omega}) \in
  \Z_{>0}^{\omega}$, with $k= \sum_{i=1}^{\omega} k_{i}$, and let $Q
  \in \R [ \X^{(1)} , \ldots
  ,\X^{(\omega)} ]$, where each
  $\X^{(i)}$ is a block of $k_{i}$ variables, and
  such that $Q$ is non-negative over $\R^{k}$, and symmetric in each of the
  blocks $\X^{(i)}$. Let $\deg   (Q) \leq d$, $d$
  an even number, and suppose that $\ZZ \left(Q, \R^{k} \right)$ is a finite
  set of points. Then, for each $(\x^{(1)} , \ldots
  ,\x^{(\omega)}) \in \ZZ \left(Q, \R^{k} \right)$, we have that
  for each $i,1 \leq i \leq \omega$, $\card
  \left(\bigcup_{1 \leq j \leq k_{i}} \{ x^{(i)}_{j} \} \right) \leq d/2$
  (where $\x^{(i)} = (x^{(i)}_{1} , \ldots ,x^{(i)}_{k_{i}}
 )$).
\end{lemma}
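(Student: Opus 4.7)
The plan is to reduce to the single-block case and then apply the truncated moment problem theory, linking to the Hankel matrix from Notation \ref{not:hankel}. First I would observe that it suffices to prove the conclusion for a single block: fix $(\mathbf{x}^{(1)},\ldots,\mathbf{x}^{(\omega)}) \in \ZZ(Q,\R^k)$ and an index $i$, and consider the restriction $\tilde Q(\mathbf{X}^{(i)}) := Q(\mathbf{x}^{(1)},\ldots,\mathbf{x}^{(i-1)},\mathbf{X}^{(i)},\mathbf{x}^{(i+1)},\ldots,\mathbf{x}^{(\omega)})$. Then $\tilde Q$ is non-negative, symmetric in $\mathbf{X}^{(i)}$, of degree $\leq d$, and $\mathbf{x}^{(i)}$ must be an isolated zero of $\tilde Q$ — otherwise any non-trivial arc of zeros would extend to an arc in $\ZZ(Q,\R^k)$, contradicting finiteness. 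Thus it is enough to assume $\omega=1$ and prove that an isolated zero of a non-negative symmetric polynomial in $k$ variables of even degree $\leq d$ has at most $d/2$ distinct coordinate values.

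For the single-block case, invoke the fundamental theorem of symmetric polynomials to write $\tilde Q = F(p_1^{(k)},\ldots,p_d^{(k)})$ for some $F \in \R[Z_1,\ldots,Z_d]$ of weighted degree $\leq d$. Any $\mathbf{y} \in \R^k$ having the same first $d$ power sums as $\mathbf{x}$ automatically satisfies $\tilde Q(\mathbf{y}) = F(p(\mathbf{x})) = \tilde Q(\mathbf{x}) = 0$. Hence the ``moment fiber''
\[
F_{\mathbf{x}} = \{\mathbf{y} \in \R^k : p_r(\mathbf{y}) = p_r(\mathbf{x}),\; r=1,\ldots,d\}
\]
is contained in $\ZZ(\tilde Q)$ and is therefore finite by hypothesis. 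It remains to show $F_{\mathbf{x}}$ is positive-dimensional whenever $\ell := \#\{x_1,\ldots,x_k\} > d/2$, which will give the desired contradiction.

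To exhibit this positive-dimensional family, I would argue via the Hankel matrix of Notation \ref{not:hankel}. The $(d/2+1)\times(d/2+1)$ matrix $H = (p_{i+j}(\mathbf{x}))_{0 \le i,j \le d/2}$ (with $p_0 = k$) depends only on $p_0,\ldots,p_d$, and classical truncated moment theory gives $\mathrm{rank}(H) = \min(\ell,\,d/2+1)$. When $\ell \leq d/2$, $H$ is rank-deficient and the atomic measure $\mu = \sum_j m_j\delta_{a_j}$ associated to $\mathbf{x}$ is the \emph{unique} atomic measure of total mass $k$ realizing those moments. When $\ell > d/2$, however, $H$ has full rank $d/2+1$, and the truncated moment problem admits a one-parameter family of atomic solutions of total mass $k$. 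For $\ell > d$ this family is immediate from the in-stratum Jacobian computation: the map $\mathbf{a} \mapsto (P_1(\mathbf{a}),\ldots,P_d(\mathbf{a}))$ on the $\ell$-dimensional stratum has Jacobian of rank $\min(\ell,d)=d$, so the stratum-fiber at $\mathbf{x}$ is $(\ell-d)$-dimensional. For $d/2 < \ell \le d$ one instead uses a cross-stratum perturbation, merging two atoms and splitting another, parameterized by the location of the new atom with the remaining atom positions and integer multiplicities adjusted continuously to preserve $p_1,\ldots,p_d$; the non-vanishing of the relevant Vandermonde-type determinant (coming from the fact that the Hankel rank jumps from $\ell$ to $d/2+1$) ensures that such a family of honest $k$-tuples exists arbitrarily close to $\mathbf{x}$.

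The main obstacle is Step~3, namely showing that the continuous family of moment-matching atomic measures actually contains a positive-dimensional subfamily with positive integer weights summing to $k$ (so that it gives genuine points of $\R^k$, not merely signed or real-weighted measures). The Hankel-rank argument furnishes a one-parameter family of atomic measures analytically, but translating this into an arc in $\R^k$ requires a local stratification analysis; the easiest route is the cross-stratum perturbation described above, whose existence follows from non-degeneracy of a Vandermonde minor encoded in the Hankel matrix. Once this is in place, the infinitely many points of $F_{\mathbf{x}}$ contradict finiteness of $\ZZ(Q)$, completing the proof.
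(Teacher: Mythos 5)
Your reduction to one block and your write-up of $\tilde Q$ in terms of $p_1,\ldots,p_d$ are fine, but the central claim your argument rests on is false: it is not true that the fiber $F_{\mathbf{x}}=\{\mathbf{y}\in\R^k : p_r(\mathbf{y})=p_r(\mathbf{x}),\ 1\le r\le d\}$ is positive-dimensional whenever $\ell=\mathrm{card}\{x_1,\ldots,x_k\}>d/2$. Whenever $k\le d$ (the range $d/2<k\le d$ is exactly where the lemma is non-vacuous but your in-stratum Jacobian argument gives nothing), fixing $p_1,\ldots,p_d\supseteq p_1,\ldots,p_k$ already determines $e_1,\ldots,e_k$ and hence the whole $\mathfrak{S}_k$-orbit, so $F_{\mathbf{x}}$ is finite no matter how large $\ell$ is; e.g.\ $k=3$, $d=4$, $\mathbf{x}=(0,1,2)$ has $\ell=3>2=d/2$ and a finite fiber. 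So for $d/2<\ell\le d$ no contradiction with finiteness of $\ZZ(Q,\R^k)$ can be extracted this way, and the ``cross-stratum perturbation'' you invoke to save this case cannot exist as described: a point of $\R^k$ is an atomic measure with \emph{positive integer} multiplicities summing to $k$, and these cannot be ``adjusted continuously''; the one-parameter families produced by truncated-moment/Hankel theory vary the weights in $\R_{>0}$ and generically leave the set of integer-weighted measures immediately. Fixing all $d$ moments is simply too rigid a constraint to produce a curve of genuine points in the zero set.

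The paper's proof avoids this by fixing far less and not insisting on staying inside $\ZZ(Q,\R^k)$ at all. Because $\deg Q\le d$, one can write $Q=G_0(\mathbf{Y},e_1,\ldots,e_{d/2})+\sum_{i>d/2}G_i(\cdots)e_i$, i.e.\ $Q$ is \emph{affine-linear} in the elementary symmetric functions of index $>d/2$. One then perturbs only the high coefficients of the characteristic polynomial $f_p(T)=\prod_j(T-x_j)$ — keeping $e_1,\ldots,e_{d/2}$ (indeed $e_1,\ldots,e_{\ell-1}$) fixed — in such a way that all roots stay real: when $\ell=k$ this is immediate since simple roots move diffeomorphically with the coefficients, and when $d/2<\ell<k$ one uses the perturbation $f_p+\eps g$ of Proposition~3.2 of \cite{Riener}, which keeps all roots real while increasing the number of distinct ones. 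Along this arc $Q$ becomes a linear function of $\eps$ (by the half-degree structure above), so either it vanishes identically, producing a positive-dimensional subset of $\ZZ(Q,\R^k)$ and contradicting finiteness, or it changes sign, contradicting nonnegativity. That dichotomy — linearity in $\eps$ plus nonnegativity — is the mechanism your proposal is missing, and it is what makes the bound $d/2$ (rather than the bound $d$ your Jacobian argument would yield) attainable.
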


\begin{proof} We assume without loss of generality that 
$i=\omega$, and let $\Y$ denote the variables $(\X^{(1)} ,
\ldots ,\X^{(\omega -1)})$. First notice that there exists
polynomials
$G_{0} ,G_{d/2+1} , \ldots ,G_{d} \in \R[\Y,Z_1,\ldots,Z_{d/2}]$
such that
\begin{eqnarray}
  Q & = & G_{0} (\Y,e_{1} , \ldots ,e_{d/2}) + \sum_{i=d/2+1}^{d}
  G_{i} (\Y, e_{1} , \ldots , 
  e_{d/2})
  e_{i} 
\end{eqnarray}
where $e_{i} (\X^{(\omega)})$ is the $i$-th elementary symmetric
polynomial in $\X^{(\omega)}$.

Let 
$\x= (\mathbf{y},\x^{(\omega)}) \in \ZZ \left(Q, \R^{k}\right)$ be
such that 
\[
\ell := \ell (\x^{(\omega)}) = \card(\bigcup_{1 \leq j \leq k_{\omega}} \{ x_{j}^{(\omega)} \}),
\]
where $\x^{(\omega)} = (x^{(\omega)}_{1} , \ldots ,x^{(\omega)}_{k_{\omega}})$, 
is maximum amongst all the points $\X$ belonging to the  finite set $\ZZ(Q, \R^{k})$. 
The proof of the lemma is by contradiction. Suppose that
$\ell >d/2$. There are two cases to consider
-- namely, the case when $\ell = k_\omega$, and the case $d/2 < \ell < k_\omega$. We treat each one separately below.
\\

The case  $\ell =k$:   Since the roots of a univariate polynomial depend
  continuously on the coefficients we have that there is a $\eps_{0} > 0$, such that for every 
  $\xi = (\xi_{0} , \ldots , \xi_{k_{\omega}-1}) \in \R^{k_{\omega}}$, with $| \xi | < \eps_{0}$, the polynomial
  \begin{eqnarray*}
    f_{\xi} & = & \sum_{j=0}^{k_{\omega} -1} (-1)^{k_{\omega} -j} (e_{k-j}
    (x) + \xi_{j}) T^{j}  +T^{k_{\omega}}\\
    &  & 
  \end{eqnarray*}
  also has $k_{\omega}$ distinct real roots
  (since having all roots real is an open condition on the space of real monic polynomials of a given degree). 
  Considering these $k_{\omega}$  real roots of $f_{\xi}$ as the $k_{\omega}$ components of a point 
  $\theta(\xi) \in \R^{k_{\omega}}$ we get a 
  differentiable map 
  \begin{equation*}
  \theta : B_{k_\omega}(\mathbf{0},\eps_0)  \rightarrow \R^{k_{\omega}}.
  \end{equation*}

   Using the fact that all the roots of $f_\xi$ are distinct for $\xi \in B_{k_\omega}(\mathbf{0},\eps_0)$, it is a simple exercise to check
   that the Jacobian of the map $\theta$ has non-vanishing determinant at all $\xi \in B_{k_\omega}(\mathbf{0},\eps_0)$, and hence $\theta$
  is a diffeomorphism on to its image (by the inverse function theorem).
  
 Clearly the set $U =  \{ \mathbf{y} \} \times \theta (V_{\xi})$ where
 \[ 
 V_{\xi} = B_{k_\omega}(\mathbf{0},\eps_0)  \cap \{ \xi  \mid \xi_{d/2+1} = \cdots = \xi_{k_\omega} =0 \} 
 \]
  contains 
  $\x$.
  
  Notice that since $d/2<k_{\omega}$, dimension of $V_{\xi}$
  and hence that of $U$ is at least one.

 Now if $G_{i} (\mathbf{y},e_{1} (\x^{(\omega)}) , \ldots ,e_{d/2} (\x^{(\omega)})) =0$ 
 for all $i,d/2+1 \leq i \leq k_{\omega}$, then 
for all $\x' = (\y,\zb') \in U$,
\begin{eqnarray*}
Q(\y,\zb') &=& 
 G_{0} (\y,e_{1}(\zb') , \ldots ,e_{d/2}(\zb')) +\\
 && \sum_{i=d/2+1}^{d}
  G_{i} (\y, e_{1}(\zb') , \ldots , 
  e_{d/2}(\zb'))
  e_{i}(\zb')\\
  &=&
  G_{0} (\y,e_{1}(\x^{(\omega)})+\xi_{k_\omega} , \ldots ,e_{d/2}(\x^{(\omega)})+\xi_{d/2}) + \\
  &&\sum_{i=d/2+1}^{d}
  G_{i} (\y, e_{1}(\x^{(\omega)})+
 \xi_{k_\omega} , \ldots , 
  e_{d/2}(\x^{(\omega)})+ \xi_{d/2})
  (e_{i}(\zb')+\xi_{k_\omega -i})\\
  &=&
  G_{0} (\y,e_{1}(\x^{(\omega)}) , \ldots ,e_{d/2}(\x^{(\omega)}) ) + \\
  &&\sum_{i=d/2+1}^{d}
  G_{i} (\y, e_{1}(\x^{(\omega)})+ , \ldots , 
  e_{d/2}(\x^{(\omega)}))
  (e_{i}(\zb')+\xi_{k_\omega -i}) \\
  &=&
  G_{0} (\y,e_{1}(\x^{(\omega)}) , \ldots ,e_{d/2}(\x^{(\omega)})) \\
  &=& 0,
  \end{eqnarray*}
and hence
 $U \subset \ZZ (Q, \R^{k})$ which contradicts the assumption that 
 $\ZZ(Q, \R^{k})$ is a finite set of points.
 
  Otherwise, if 
  \[
  G_{i} (
  \mathbf{y},e_{1} (\x^{(\omega)}) , \ldots ,e_{d/2} (
  \x^{(\omega)}))   \neq 0
  \] 
  for some $i,d/2+1 \leq i \leq  k_{\omega}$,
   then supposing that 
   \[
   G_{i} (\mathbf{y},e_{1} (\x^{(
  \omega)}) , \ldots ,e_{d/2} (\x^{(\omega)}))  > 0
  \]
  (respectively $G_{i} (\mathbf{y},e_{1} (\x^{(\omega)}) ,
  \ldots ,e_{d/2} (\x^{(\omega)}))  <0$), 
  
  \[
  Q(\mathbf{y}, \theta ((\underbrace{0,\ldots,0}_{k_\omega-i-1},\eps,\underbrace{0\ldots,0}_{i}) )) <0
  \]
  for all $\eps,   -\eps_0< \eps < 0$ (respectively $0 < \eps <\eps_0$).
  This contradicts the fact that $Q$ is non-negative everywhere. \\
  
  The case  $d/2< \ell <k_{\omega}$:   In this case by Proposition 3.2 in
  \cite{Riener} there exists a univariate polynomial
  \begin{eqnarray*}
    g & = & \sum_{j=0}^{k_{\omega} - \ell}   (-1)^{k_{\omega} -j} g_{j}
    T^{j}
  \end{eqnarray*}
  having the following property.
  Let,
  \begin{eqnarray*}
  f  &=& \prod_{i=1}^{k_\omega} (T - x^{(\omega)}_i),
  \end{eqnarray*}
  and 
  \begin{eqnarray*}
   h_{\eps} & = & f + \eps g.
  \end{eqnarray*}
  Then, there exists $\eps_0 > 0$, such that for all $\eps \neq 0$, with $| \eps| < \eps_0$, 
  $h_\eps$ 
  is monic,
  has all its roots real, and moreover has at least $\ell +1$ distinct roots.

  Considering now the $k$ real roots of $h_{\eps}$ as the $k$ components of a
  point 
  in 
  $\R^{k_{\omega}}$, we obtain a
  continuous (non-constant) semi-algebraic curve 
  \[
  \gamma : (- \eps_{0}, \eps_{0}) \rightarrow \R^{k_{\omega}}.
  \]
  Note that
  the curve is non-constant,
  since for all $\eps \in (- \eps_{0} ,\eps_{0})$ with $\eps \neq 0$, $\gamma(\eps)$ has
  strictly more distinct components than $\x^{(\omega)}$, and hence
  $\gamma(\eps) \neq \x^{(\omega)}$.

  It follows that
  for each $\eps \in (- \eps_{0} , \eps_{0})$
  \begin{eqnarray*}
  Q (\mathbf{y}, \gamma(\eps)) & =  &
    G_{0} (\y,e_{1}(\x^{(\omega)}) , \ldots ,e_{d/2}(\x^{(\omega)})) + \\
  &&\sum_{i=d/2+1}^{\ell}
  G_{i} (\y, e_{1}(\x^{(\omega)})+ , \ldots , 
  e_{d/2}(\x^{(\omega)}))
  e_{i}(\x^{(\omega)}) + \\ 
  &&\sum_{i=\ell+1}^{k_\omega}
  G_{i} (\y, e_{1}(\x^{(\omega)})+ , \ldots , 
  e_{d/2}(\x^{(\omega)}))
  (e_{i}(\x^{(\omega)})+\eps g_{k_\omega-i}) \\
  &=&
  \eps (
    \sum_{j=0}^{k_{\omega} - \ell} g_{j} G_{k-j} (\mathbf{y},e_{1} (
    \x^{(\omega)}) , \ldots ,e_{d/2} (\x^{(\omega)}))
    ) .
  \end{eqnarray*}
  There are again two cases. If 
  \[
  \sum_{j=0}^{k_{\omega} - \ell} g_{j}
  G_{k_{\omega} -j} (\mathbf{y},e_{1} (\x^{(\omega)}) , \ldots
  ,e_{d/2} (\x^{(\omega)})) =0,
  \] 
  then $Q$ vanishes on $\gamma
  \left(\left(- \eps_{0} , \eps_{0} \right) \right)$, which contradicts the
  hypothesis that  $\ZZ \left(Q, \R^{k} \right)$ is a finite
  set of points.
  Otherwise, if
  \[
  \sum_{j=0}^{k_{\omega} - \ell} g_{j} G_{k_{\omega} -j} (\mathbf{y},e_{1} (
  \x^{(\omega)}) , \ldots ,e_{d/2} (\x^{(\omega)}))
  \neq 0,
  \]
 then 
 \[
 Q \left(\mathbf{y}, \gamma \left(\eps \right) \right)
  \cdot Q \left(\mathbf{y}, \gamma \left(- \eps \right) \right) <0,
  \] 
  for  every $\eps \in \left(- \eps_{0} , \eps_{0} \right) , \eps \neq 0$, and
  this contradicts the hypothesis that $Q$ is non-negative everywhere.
\end{proof}

Before proving the next proposition we introduce a notation.

\begin{notation}
  For any pair $(\mathbf{k},\boldsymbol{\ell})$, where $\mathbf{k}= (k_{1} ,
  \ldots ,k_{\omega}) \in \Z_{>0}^{\omega}$, $k= \sum_{i=1}^{\omega}
  k_{i}$, and $\boldsymbol{\ell}= (\ell_{1} , \ldots , \ell_{\omega})$, with $1
  \leq \ell_{i} \leq k_{i}$, we denote by $A_{\mathbf{k}}^{\boldsymbol{\ell}}$ the
  subset of $\R^{k}$ defined by
  \begin{eqnarray*}
    A^{\boldsymbol{\ell}}_{\mathbf{k}} & = & \left\{ x= (x^{(1)} , \ldots x^{(
    \omega)}) \mid  \card \left(
    \bigcup_{j=1}^{k_{i}} \{ x_{j}^{(i)} \} \right) = \ell_{i} \right\} .
  \end{eqnarray*}
\end{notation}

\begin{proposition}
  \label{prop:half-degree}Let $\mathbf{k}= (k_{1} , \ldots ,k_{\omega})
  \in \Z_{>0}^{\omega}$, with $k= \sum_{i=1}^{\omega} k_{i}$, and $P
  \in \R [ \X^{(1)} , \ldots
  ,\X^{(\omega)} ]$, where each
  $\X^{(i)}$ is a block of $k_{i}$ variables, such
  that $P$ is non-negative and symmetric in each block of variable
  $\X^{(i)}$ and $\deg (P) \leq d$. Let $(X_{1}
  , \ldots ,X_{k})$ denote the set of variables $(
  \X^{(1)} , \ldots ,\X^{(
  \omega)})$ and let $F=e_{1}^{(k)} (X_{1} , \ldots ,X_{k})$. Suppose
  that the critical points of $F$ restricted to $V= \ZZ \left(P, \R^{k}
  \right)$ are isolated. Then, each critical point of $F$ restricted to $V$ is
  contained in $A^{\boldsymbol{\ell}}_{\mathbf{k}}$ for some $\boldsymbol{\ell}= (\ell_{1}
  , \ldots , \ell_{\omega})$ with each $\ell_{i} \leq d$.
\end{proposition}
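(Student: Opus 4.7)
The plan is to reduce the proposition to the already-established half-degree principle (Lemma \ref{lem:half-degree}). The key observation I would use is that, because $P \geq 0$ on $\R^k$ and vanishes identically on $V = \ZZ(P, \R^k)$, every point of $V$ is a global minimum of $P$, so $\nabla P \equiv 0$ on $V$. Consequently the Lagrange equations characterising a critical point of $F$ restricted to $V$---the system $P = 0$ together with $\partial_1 P - \partial_j P = 0$ for $2 \leq j \leq k$, which is exactly the system used (for the deformed polynomial) in the proof of Proposition \ref{prop:non-degenerate}---are trivially satisfied at every point of $V$. The entire variety $V$ is therefore the critical locus of $F|_V$.

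Granting this, I would interpret the hypothesis that the critical points of $F|_V$ are isolated as saying precisely that $V$ is zero-dimensional, i.e., $V = \ZZ(P, \R^k)$ is a finite set. At this stage the conclusion is within reach of Lemma \ref{lem:half-degree}: when $d$ is even, the lemma applied to $P$ gives at once
\[
\mathrm{card}\!\left(\bigcup_{j=1}^{k_i}\{x^{(i)}_j\}\right) \leq d/2 \leq d
\]
for each block $i$ and every $(\mathbf{x}^{(1)}, \ldots, \mathbf{x}^{(\omega)}) \in V$; when $d$ is odd, I would invoke the lemma with the even bound $d+1 \geq \deg P$, obtaining $\leq (d+1)/2 \leq d$ (valid for $d \geq 1$; the case $d = 0$ is vacuous since then $P$ is a non-negative constant and $V$ is either empty or all of $\R^k$). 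In either case each critical point lies in $A^{\mathbf{l}}_{\mathbf{k}}$ for some $\mathbf{l} = (\ell_1, \ldots, \ell_\omega)$ with $\ell_i \leq d$, as claimed.

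The only genuinely conceptual step is the first one: recognising that the non-negativity of $P$ collapses the Lagrange condition along $V$, so that the ``isolated critical points'' hypothesis translates into a zero-dimensionality statement for $V$ itself. I do not expect any substantial technical obstacle afterwards, since the remainder is only a parity-adjusted invocation of Lemma \ref{lem:half-degree}. The one point I would double-check against the paper's conventions is that the intended notion of ``critical point of $F|_V$'' is indeed the algebraic (Lagrange) one---consistent with the usage in Proposition \ref{prop:non-degenerate}---rather than a purely differential-topological one formulated on the smooth locus of $V$; under the latter, the same proof still works because a smooth point $p$ of $V$ where $dF|_{T_p V} = 0$ must in particular solve the same Lagrange system, and non-negativity of $P$ then again forces $V$ to be finite near such $p$.
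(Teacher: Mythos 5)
Your main argument is correct, but it takes a genuinely different route from the paper, and the difference is worth understanding. The paper never uses the non-negativity of $P$: given a critical point $p$ (in the algebraic sense of the Lagrange system $P=0$, $\partial P/\partial X_i - \partial P/\partial X_j = 0$, the same system as in Proposition \ref{prop:non-degenerate}), it forms the auxiliary polynomial $Q = P^{2} + \sum_{i,j} \left( \partial P/\partial X_i - \partial P/\partial X_j \right)^{2}$, which is automatically non-negative, symmetric in each block, of degree at most $2d$, and whose real zero set is exactly the critical locus, hence finite under the hypothesis; Lemma \ref{lem:half-degree} applied to $Q$ then gives at most $2d/2 = d$ distinct coordinates per block. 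You instead exploit $P \geq 0$ to conclude $\nabla P \equiv 0$ on $V$, so the Lagrange critical locus is all of $V$, isolatedness forces $V$ to be finite, and Lemma \ref{lem:half-degree} applied to $P$ itself (with your parity adjustment) gives the conclusion, indeed with the sharper bound of roughly $d/2$. This is a valid and more economical proof of the literal statement, and it correctly exposes that under the stated hypotheses the proposition is essentially a restatement of Lemma \ref{lem:half-degree} for finite $V$; what the paper's version buys is portability: the $Q$-trick needs no positivity of $P$, and this is exactly what is used later, since in the proof of Theorem \ref{thm:main} the argument is applied to $\Def(P,\zeta,d')$, which is \emph{not} non-negative. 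One genuine error in your closing remark: under the differential-topological reading of ``critical point on the smooth locus'' your proof does \emph{not} go through, because isolated manifold-critical points do not force $V$ to be finite near them; for instance $P = (X_1^{2}+\cdots+X_k^{2}-1)^{2}$ is non-negative and symmetric, $V=\ZZ(P,\R^{k})$ is the unit sphere, $e_1|_V$ has exactly two isolated critical points in the manifold sense, yet $V$ is nowhere finite (here the Lagrange locus is the whole sphere, so the paper's hypothesis simply fails). Since the paper's usage, via Proposition \ref{prop:non-degenerate}, makes clear that the algebraic (Lagrange) notion is intended, this does not affect your main proof, but the fallback claim as stated is false.
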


\begin{proof} 
Let  $\X = (X_{1} , \ldots ,X_{k})$ denote the
variables $(\X^{(1)} , \ldots ,\X^{(\omega)})$. 
Let 
\[
\x= (x_{1} , \ldots ,x_{k})
\] 
be a critical point of $F$ restricted to $V$. 
Then, $\x$ is an isolated zero (in fact a local minima) of the polynomial
\begin{eqnarray*}
  Q & = & P^{2} + \sum_{i,j=1}^{k} \left(\frac{\partial  P}{\partial X_{i}} -
  \frac{\partial P}{\partial X_{j}} \right)^{2} .
\end{eqnarray*}
Notice that $Q$ is symmetric in each block of variables $\X^{(1)}
, \ldots ,\X^{(\omega)}$ and $\deg (Q) \leq 2d$. Now apply Lemma
\ref{lem:half-degree}.

\end{proof}

Before proceeding further we need some more notation.

\begin{notation}
\label{noy:L-pi}
  Let $\boldpi \in
  \boldPi_{\mathbf{k}}$ where $\mathbf{k}= (k_{1} ,
  \ldots ,k_{\omega}) \in \Z_{>0}^{\omega}$, with $k=
  \sum_{i=1}^{\omega} k_{i}$. 
  
  For $1 \leq i  \leq \omega$, and $1 \leq j \leq
  \length (\pi^{(i)})$, let $L_{\pi^{(i
 )}_{j}} \subset \R^{k}$ be defined by the equations
  \begin{eqnarray*}
    X^{(i)}_{\pi_{1}^{(i)} + \cdots + \pi_{j-1}^{(i)} +1} & = \cdots = &
    X^{(i)}_{\pi_{1}^{(i)} + \cdots + \pi_{j}^{(i)}},
  \end{eqnarray*}
and let
  \begin{eqnarray*}
    L_{\boldpi} & = & \bigcap_{1 \leq i \leq \omega}
    \bigcap_{1 \leq j \leq \length (\pi^{(i)})}
    L_{\pi^{(i)}_{j}} . 
  \end{eqnarray*}
  \end{notation}

\begin{notation}
\label{not:pi-of-x}
For $\x \in \R^k$ or $\C^k$, let $G_\x$ be the isotropy subgroup of $\x$ with respect to the action of
$\mathfrak{S}_k$ on $\R^k$ or $\C^k$ permuting coordinates. Then, it is easy to verify that 
\[
G_\x \cong \mathfrak{S}_{\ell_1} \times \cdots \times \mathfrak{S}_{\ell_m},
\]
where $k \geq \ell_1 \geq \ell_2 \geq \cdots \geq \ell_m > 0, \sum_i \ell_i = k$,
and $\ell_1,\ldots,\ell_m$  are the  
cardinalities of the sets 
\[
\{i \mid 1 \leq i \leq k, x_i = x\}, x \in \bigcup_{i=1}^k \{x_i \}
\] 
in non-decreasing order.
We denote by $\pi(\x)$ the partition $(\ell_1,\ldots,\ell_m) \in \Pi_k$.

More generally, for $\mathbf{k}= (k_{1} , \ldots ,k_{\omega})
  \in \Z_{>0}^{\omega}$, with $k= \sum_{i=1}^{\omega} k_{i}$, and 
  $\x = (\x^{(1)},\ldots,\x^{(\omega)}) \in \R^k$, where each $\x^{(i)} \in \R^{k_i}$, we denote 
  \[
  \boldpi(\x) = (\pi(\x^{(1)}),\ldots,\pi(\x^{(\omega)})) \in \boldPi_\kk.
  \]
\end{notation}

\begin{proposition}
  \label{prop:betti_bound}
  Let $\mathbf{k}= (k_{1} , \ldots ,k_{\omega})
  \in \Z_{>0}^{\omega}$, with $k= \sum_{i=1}^{\omega} k_{i}$, and let
  $S \subset \R^{k}$ be a bounded symmetric basic closed semi-algebraic set
  defined by $P \leq 0$, where $P \in \R [ \X^{(1
 )} , \ldots ,\X^{(\omega)} ]$ is symmetric in
  each block of $k_{i}$ variables $\X^{(i)}$, and
  such that $W= \ZZ \left(P, \R^{k} \right)$ is non-singular and bounded.
  Suppose that $F=e_{1} (\X^{(1)} , \ldots
  ,\X^{(\omega)})$ restricted to $W$ has a finite
  number of critical points, all of which are non-degenerate. Let $C$ denote
  the finite set of critical points of $F$ restricted to $W$. Then, for any
  field of coefficients $\F$,
  \begin{eqnarray*}
    b (\phi_{\mathbf{k}} (S) ,\F) & \leq & \frac{1}{2}
    \card (\phi_{\mathbf{k}} (C)) .
  \end{eqnarray*}
  Moreover,
  \begin{eqnarray}
    b_{i} (\phi_{\mathbf{k}} (S) ,\F) & = & 0 
    \label{eqn:prop-main}
  \end{eqnarray}
  for
  \begin{eqnarray*}
    i & \geq & \max_{\x \in C}   (\length (
    \boldpi (\x))).
  \end{eqnarray*}
\end{proposition}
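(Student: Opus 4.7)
\medskip
\noindent\textbf{Proof plan.}
The strategy is to carry out a stratified Morse-theoretic argument for the induced function $\bar F$ on the compact semi-algebraic set $\phi_{\mathbf{k}}(S)$. First, observe that since $F=e_1^{(k)}$ is $\mathfrak{S}_{\mathbf{k}}$-invariant, it descends to a continuous semi-algebraic function $\bar F$ on $\phi_{\mathbf{k}}(S)$, and the map $\phi_{\mathbf{k}}|_{W_{\boldsymbol{\pi}}}$ is a diffeomorphism onto $Z_{\boldsymbol{\pi}}$ for each $\boldsymbol{\pi}\in\boldsymbol{\Pi}_{\mathbf{k}}$. By Proposition \ref{prop:quotient-sa} (and its obvious extension to the set $S$ defined by $P\le 0$ using Corollary \ref{cor:description-procesi-sa}), the quotient $S/\mathfrak{S}_{\mathbf{k}}$ is homeomorphic to $\phi_{\mathbf{k}}(S)$, so it suffices to bound the Betti numbers of $\phi_{\mathbf{k}}(S)$.

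Next, I would analyze the topology of the sublevel sets $\phi_{\mathbf{k}}(S)\cap\{\bar F\le t\}$ as $t$ varies. By Tarski--Seidenberg transfer it is enough to work over $\mathbb{R}$ and use integration of the gradient flow of (a symmetric, stratified approximation to) $F$, following the standard template of semi-algebraic Morse theory for manifolds with boundary. Topological change occurs precisely at the images of critical points of $F|_W$, i.e.\ at $\phi_{\mathbf{k}}(C)$. At each such point $\phi_{\mathbf{k}}(p)$ one invokes the equivariant Morse Lemmas \ref{lem:equivariant_morseA} and \ref{lem:equivariant_morseB} cited in the outline: the local model identifies a passage through a critical value with attaching a cell whose dimension is controlled by the dimension of the stratum containing $p$. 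Since $p\in L_{\boldsymbol{\pi}(p)}$, the stratum $W_{\boldsymbol{\pi}(p)}$ has dimension $\mathrm{length}(\boldsymbol{\pi}(p))$, and the attached cell has dimension strictly less than $\mathrm{length}(\boldsymbol{\pi}(p))$. This immediately gives the vanishing statement \eqref{eqn:prop-main}, since no cell of dimension $\ge \max_{p\in C}\mathrm{length}(\boldsymbol{\pi}(p))$ is ever attached.

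For the factor $\tfrac{1}{2}$ in the main inequality, the key observation is that the critical points of the linear function $F$ on the smooth bounded hypersurface $W=\partial S$ split canonically into two classes according to whether the gradient $\nabla F$ agrees with or opposes the outward unit normal to $W$. Because $F$ is linear and $W$ is a smooth closed hypersurface in $\R^k$, these two classes are in natural bijection (by the involution $p\mapsto$ the antipodal critical point on $W$ in the direction of $\nabla F$; equivalently, by pairing each outward-pointing critical point of $F$ with the inward-pointing one lying on the same gradient line). Only critical points of one of the two types contribute cells to $\phi_{\mathbf{k}}(S)$ in the Morse-theoretic build-up of the sublevel sets; the others correspond to values where the sublevel set is unchanged up to homotopy. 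This involution is $\mathfrak{S}_{\mathbf{k}}$-equivariant (because both $F$ and the normal direction to $W$ are), so it descends to a fixed-point-free involution on $\phi_{\mathbf{k}}(C)$, showing that at most $\tfrac{1}{2}\,\card(\phi_{\mathbf{k}}(C))$ cells are attached in total. By the usual Morse inequality $\sum_i b_i\le (\text{number of cells attached})$, this yields the bound $b(\phi_{\mathbf{k}}(S),\F)\le\tfrac{1}{2}\card(\phi_{\mathbf{k}}(C))$.

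The main obstacle is the stratified nature of $\phi_{\mathbf{k}}(S)$: it is not a manifold, and the usual local model at a Morse critical point must be replaced by an equivariant version tracking the symmetry type $\boldsymbol{\pi}(p)$. In particular, verifying that the cell attached at $\phi_{\mathbf{k}}(p)$ has dimension at most $\mathrm{length}(\boldsymbol{\pi}(p))-1$ requires a careful analysis of the Hessian of $F$ restricted to the $\mathfrak{S}_{\mathbf{k}}$-invariant directions at $p$ (the ``tangential'' directions inside $L_{\boldsymbol{\pi}(p)}\cap W$), and this is exactly the content of the equivariant Morse Lemmas \ref{lem:equivariant_morseA}--\ref{lem:equivariant_morseB} that the paper sets up. Once that local analysis is in place, the Morse-theoretic accounting and the involutive pairing of boundary critical points combine to give both claims of the proposition.
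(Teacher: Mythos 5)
Your overall architecture is the same as the paper's: both proofs reduce the proposition to the equivariant Morse Lemmas \ref{lem:equivariant_morseA} and \ref{lem:equivariant_morseB}, and your treatment of the vanishing statement \eqref{eqn:prop-main} is essentially the paper's (Lemma \ref{lem:equivariant_morseB}(3) together with $\dim L_{\mathrm{fixed}} = \mathrm{length}(\boldsymbol{\pi})-1$ from Proposition \ref{prop:orthogonal}: cells that survive in the quotient lie in $L_{\mathrm{fixed}}$, hence have dimension at most $\mathrm{length}(\boldsymbol{\pi}(p))-1$). That part is fine.

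The gap is in your justification of the factor $\tfrac{1}{2}$. There is no ``natural bijection'' or fixed-point-free involution between the critical points where $\nabla F$ points into $S$ and those where it points out: gradient lines of the linear function $F$ through a critical point of $F|_W$ need not meet any other critical point, and the two classes can have different cardinalities. A concrete obstruction (already with $\mathbf{k}=(1,1)$, so trivial group action): let $S\subset\R^2$ be a smooth disk with one concave dent on the side facing $-\nabla F$; then $F|_{\partial S}$ has four nondegenerate critical points, three of which satisfy $\sum_i \partial P/\partial X_i(p)<0$ (the two shoulders and the dent tip) and only one of which satisfies $\sum_i \partial P/\partial X_i(p)>0$. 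So ``at most $\tfrac12\card(\phi_{\mathbf{k}}(C))$ cells are attached'' cannot be deduced from a pairing of the two classes. The way to get the factor $\tfrac12$ is to run the sweep twice: Lemmas \ref{lem:equivariant_morseA} and \ref{lem:equivariant_morseB} bound $b(\phi_{\mathbf{k}}(S),\F)$ by the number of orbits of critical points at which $\sum_i \partial P/\partial X_i(p)<0$, and applying the identical argument to $-F=-e_1$ (which is again symmetric and has the same critical points on $W$, with the roles of conditions (1) and (2) interchanged, since at a critical point $\nabla P(p)$ is a nonzero multiple of $(1,\ldots,1)$ and hence the sum is nonzero) bounds it by the number of orbits at which the sum is positive; as these two counts add up to $\card(\phi_{\mathbf{k}}(C))$, the minimum of the two is at most $\tfrac12\card(\phi_{\mathbf{k}}(C))$. (The paper's own one-line remark that ``at most half the critical values satisfy condition (2)'' should be understood in this way as well; taken literally for a single sweep direction it fails in the same dent example.)
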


For the proof of Proposition \ref{prop:betti_bound} we will need the following
proposition and lemmas.

\begin{proposition}
  \label{prop:orthogonal}Let $L \subset \R^{k}$ be the subspace defined by
  $\sum_{i} X_{i} =0$, and $\boldpi= (\pi^{(1)} ,
  \ldots , \pi^{(\omega)}) \in \boldPi_{\mathbf{k}}$.
  Let for each i, $1 \leq i \leq \omega$, $\pi^{(i)} = (\pi^{(i)}_{1} ,
  \ldots , \pi^{(i)}_{\ell_{i}})$, and for each $j,1 \leq j \leq \ell_{i}
  ,$ let $L^{(i)}_{j}$ denote the subspace $L \cap L_{\pi^{(i)}_{j}}$ of
  $L$, and $M^{(i)}_{j}$ the orthogonal complement of $L^{(i)}_{j}$ in
  $L$. Let $L_{\fixed} =L \cap
  L_{\boldpi}$, $L'_{\fixed}
  \subset L_{\fixed}$ any subspace of
  $L_{\fixed}$, and $I  \subset \{ (i,j) \mid 1
  \leq i \leq \omega ,1 \leq j \leq \ell_{i} \}$. Then the following hold.
  \begin{enumerate}[A.]
    \item
    \label{itemlabel:prop:orthogonal:1}
     The dimension of $L_{\fixed}$ is equal to $ 
    \sum_{i=1}^{\omega} \ell_{i} -1= \length (\boldpi) -1$.
    \item 
    \label{itemlabel:prop:orthogonal:2}
    The product over $i \in [ 1, \omega ]$ of the subgroups
    $\mathfrak{S}_{\pi^{(i)}_{1}} \times \mathfrak{S}_{\pi^{(i)}_{2}}
    \times \cdots \times \mathfrak{S}_{\pi^{(i)}_{\ell_{i}}}$ acts trivially
    on $L_{\fixed}$.
    
    \item
    \label{itemlabel:prop:orthogonal:3}
     For each $i,j,1 \leq i \leq \omega ,1 \leq j \leq \ell_{i}$, $M^{(i
   )}_{j}$ is an irreducible representation of $\mathfrak{S}_{\pi^{(i
   )}_{j}}$, and the action of $\mathfrak{S}_{\pi^{(i')}_{j'}}$ on $M^{(
    i)}_{j}$ is trivial if $(i,j) \neq (i' ,j')$.
    
    \item 
    \label{itemlabel:prop:orthogonal:4}
    There is a direct decomposition $L=L_{\fixed} \oplus \left(
    \bigoplus_{1 \leq i \leq \omega ,1 \leq j \leq \ell_{i}} M^{(i)}_{j}
    \right)$.
    
    \item 
    \label{itemlabel:prop:orthogonal:5}
    Let $\mathbf{D}$ denote the unit disc in the subspace
    $L_{\fixed}' \oplus \left(\bigoplus_{(i,j) \in I} M^{(i)}_{j}
    \right)$. Then, the space of orbits of the pair $(\mathbf{D}, \partial
    \mathbf{D})$ under the action of $\mathfrak{S}_{\mathbf{k}}$ is homotopy
    equivalent to $(\ast , \ast)$ if $I \neq \emptyset$. Otherwise, the
    space of orbits of the pair $(\mathbf{D}, \partial \mathbf{D})$ under
    the action of $\mathfrak{S}_{\mathbf{k}}$ is homeomorphic to $(
    \mathbf{D}, \partial \mathbf{D})$.
  \end{enumerate}
\end{proposition}

\begin{proof}[Proof of Proposition \ref{prop:orthogonal}] 
From the definition of $L_{\fixed}$ it is
clear that
\begin{eqnarray*}
  \dim  L_{\fixed} & = &   
  \left(k-1 -  \left(\sum_{\substack{1 \leq i \leq
  \omega, \\1 \leq j \leq \ell_{i}}} (\pi^{(i)}_{j} -1) \right) \right)\\
  & = & \sum_{i=1}^{\omega} \ell_{i} -1\\
  & = & \length (\boldpi) -1,
\end{eqnarray*}
noting that for each $i,1 \leq i \leq \omega ,$ $\sum_{1 \leq j \leq \ell_{i}}
\pi^{(i)}_{j} =k_{i}$, and $\sum_{i=1}^{\omega} k_{i} =k$.
This proves Part \eqref{itemlabel:prop:orthogonal:1}.

Parts 
\eqref{itemlabel:prop:orthogonal:2} and \eqref{itemlabel:prop:orthogonal:3}
are now clear from the definition of the subspaces
$L_{\fixed}$ and the subspaces $M^{(i)}_{j}$. 

In order to prove Part
\eqref{itemlabel:prop:orthogonal:4}
notice that each $M^{(i)}_{j}$ is orthogonal complement of $L^{(i
)}_{j}$ in $L$, 
$\dim  L^{(i)}_{j} + \dim  M^{(i)}_{j} =k-1$. 
Moreover,
$\dim  L^{(i)}_{j} =k-1- (\pi^{(i)}_{j} -1) =k- \pi^{(i)}_{j}$. 
Hence,
$\dim  M^{(i)}_{j} = \pi^{(i)}_{j} -1$. 
Now since 
\[
L_{\fixed} =  \bigcap_{\substack{1 \leq i \leq \omega, \\1 \leq j \leq \ell_{i}}} L^{(i)}_{j},
\] 
it  follows that
$
\sum_{\substack{1 \leq i \leq \omega, \\1 \leq j \leq \ell_{i}}} M^{(i)}_{j}
$
is the orthogonal complement of $L_{\fixed}$ in $L$. 
Hence, 
\[
L= L_{\fixed} \oplus \left(\sum_{\substack{1 \leq i \leq \omega, \\1 \leq j \leq \ell_{i}}} M^{(i)} \right),
\] 
and hence 

\begin{eqnarray*}
\dim   \left(\sum_{\substack{1 \leq i \leq  \omega, \\1 \leq j \leq \ell_{i}}} M^{(i)} \right) &=&  
\dim  L-  \dim   L_{\fixed} \\
&=&  (k-1) -  \left(k-1 -  \sum_{1 \leq i \leq \omega ,1 \leq
j \leq \ell_{i}} (\pi^{(i)}_{j} -1) \right) \\
&=& \sum_{\substack{1 \leq i \leq \omega, \\1 \leq j \leq \ell_{i}}} (\pi^{(i)}_{j} -1) \\
&=& \sum_{\substack{1 \leq i \leq \omega, \\1 \leq j \leq \ell_{i}}} \dim  M^{(i)}_{j}.
\end{eqnarray*}
It follows, that 
\[
\sum_{\substack{1 \leq i \leq \omega, \\1 \leq j \leq \ell_{i}}}  M^{(i)}_{j}   \simeq   \bigoplus_{\substack{1
\leq i \leq \omega, \\1 \leq j \leq \ell_{i}}} M^{(i)}_{j}.
\]

In order to prove Part 
\eqref{itemlabel:prop:orthogonal:5}
first observe that the space of orbits of
$\partial \mathbf{D}$ (respectively $\mathbf{D}$) under the action of
$\mathfrak{S}_{\mathbf{k}}$ is homeomorphic to the quotient
$\partial \mathbf{D}/ \prod_{(i,j) \in I} \mathfrak{S}_{\pi^{(i)}_{j}}$
(respectively $\mathbf{D}/ \prod_{(i,j) \in I} \mathfrak{S}_{\pi^{(i
)}_{j}}$). Moreover, $\partial \mathbf{D}$ is equivariantly homeomorphic to
the topological join of $\partial \mathbf{D}_{\fixed}$ with the various
$\partial \mathbf{D}^{(i)}_{j} , (i,j) \in I$ where
$\mathbf{D}_{\fixed}$ is the unit disc in $L_{\fixed}$, and for
each $(i,j) \in I$, $\mathbf{D}^{(i)}_{j}$ is the unit disc in the
subspace $M^{(i)}_{j}$. The subgroup $\prod_{(i,j) \in I}
\mathfrak{S}_{\pi^{(i)}_{j}}$ acts trivially on $\partial
\mathbf{D}_{\fixed}$, and it follows from 
Part \eqref{itemlabel:prop:orthogonal:3}
of the proposition
that for each $(i,j) \in I$,
\begin{eqnarray*}
  \partial \mathbf{D}^{(i)}_{j} / \prod_{(i,j) \in I}
  \mathfrak{S}_{\pi^{(i)}_{j}} & \simeq_{\tmop{homeo}} & \partial
  \mathbf{D}^{(i)}_{j} /\mathfrak{S}_{\pi^{(i)}_{j}} .
\end{eqnarray*}
Hence, we get that the quotient of the topological join of $\partial
\mathbf{D}_{\fixed}$ with the various $\partial \mathbf{D}^{(i
)}_{j} , (i,j) \in I$ by 
$\prod_{(i,j) \in I} \mathfrak{S}_{\pi^{(i
)}_{j}}$ is homeomorphic to the topological join of $\partial
\mathbf{D}_{\fixed}$ with the various 
$$\partial \mathbf{D}^{(i)}_{j}
/\mathfrak{S}_{\pi^{(i)}_{j}} , (i,j) \in I.
$$

It follows from  {\cite[Theorem 4.1.8]{Reiner}} that each $\partial
\mathbf{D}^{(i)}_{j} /\mathfrak{S}_{\pi^{(i)}_{j}} , (i,j) \in I$ is
homeomorphic to $\mathbf{D}^{(i)}_{j}$ and hence homotopy equivalent to a
point. The quotient of the disc {\tmstrong{$\mathbf{D}$}} by $\prod_{(i,j)
\in I} \mathfrak{S}_{\pi^{(i)}_{j}}$ is clearly contractible. 
This proves both parts of 
\eqref{itemlabel:prop:orthogonal:5}.
\end{proof}

The proof of Proposition \ref{prop:betti_bound} will now follow from the
following two lemmas.
Following the
same notation as in Proposition \ref{prop:betti_bound}, and for any $c \in
\R$, let $S_{\leq c}$ (respectively $S_{=c}$) denote the set $S \cap F^{-1} (
(- \infty ,c ])$ (respectively $S \cap F^{-1} (c)$). Also, let $c_{1} ,
\ldots ,c_{N}$ be the finite set of critical values of $F$ restricted to $W$.

\begin{lemma}
  \label{lem:equivariant_morseA} Then, for $1 \leq i<N$, and for each $c \in
  [ c_{i} ,c_{i+1})$, $\phi_{\mathbf{k}} (S_{\leq c})$ is
  semi-algebraically homotopy equivalent to $\phi_{\mathbf{k}} (S_{\leq
  c_{i}})$.
\end{lemma}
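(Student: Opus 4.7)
The plan is to construct a $\mathfrak{S}_{\mathbf{k}}$-equivariant semi-algebraic deformation retraction of $S_{\leq c}$ onto $S_{\leq c_{i}}$ using a suitable gradient-like vector field for $F$, and then to descend the retraction to the orbit space via $\phi_{\mathbf{k}}$. Since the construction involves integrating a semi-algebraic vector field, I would first work over $\R = \mathbb{R}$ and then invoke the Tarski--Seidenberg transfer principle to extend the conclusion to arbitrary real closed fields. Observe that since $F = e_{1} = X_{1} + \cdots + X_{k}$ satisfies $\nabla F = (1,\ldots,1)$, the function $F$ has no critical points on the interior $\{P<0\}$ of $S$, while the critical points of $F|_{W}$ are isolated and non-degenerate by hypothesis; moreover, by the choice of the interval $[c_{i},c_{i+1})$ no critical value of $F|_{W}$ lies in $(c_{i},c]$.

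Next I would construct a $\mathfrak{S}_{\mathbf{k}}$-equivariant semi-algebraic vector field $\xi$ on $S \cap F^{-1}([c_{i},c])$ with the properties that (i) $\xi(F) < 0$ pointwise on $S \cap F^{-1}((c_{i},c])$, and (ii) $\xi$ is tangent to $W$ at every point of $W \cap F^{-1}([c_{i},c])$. In the interior of $S$ one can simply take $\xi$ to be the constant equivariant vector field $-\nabla F$. Near and on $W$, one modifies this by subtracting off its normal component using the equivariant gradient $\nabla P$, which is well-defined and non-zero on $W$ since $W$ is non-singular; the resulting tangential field still satisfies $\xi(F) < 0$ because $F|_{W}$ has no critical points in this region. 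An equivariant semi-algebraic partition of unity glues these two prescriptions into a single $\xi$ on all of $S \cap F^{-1}([c_{i},c])$.

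The flow of $\xi$, with appropriate time normalization, then provides an equivariant semi-algebraic deformation retraction $r_{t} : S_{\leq c} \to S_{\leq c}$ satisfying $r_{0} = \mathrm{id}$, $r_{1}(S_{\leq c}) \subseteq S_{\leq c_{i}}$, and $r_{t}|_{S_{\leq c_{i}}} = \mathrm{id}$ for all $t \in [0,1]$. Condition (ii) ensures that trajectories do not escape $S$ through $W$, while condition (i) together with compactness of $S$ guarantees that every trajectory reaches $\{F \leq c_{i}\}$ in bounded time. Since $r_{t}$ is $\mathfrak{S}_{\mathbf{k}}$-equivariant, it descends through $\phi_{\mathbf{k}}$ to a semi-algebraic deformation retraction of $\phi_{\mathbf{k}}(S_{\leq c})$ onto $\phi_{\mathbf{k}}(S_{\leq c_{i}})$, where Proposition \ref{prop:quotient-sa} (together with the fact that $e_{i}$'s separate $\mathfrak{S}_{\mathbf{k}}$-orbits just as the $p_{i}$'s do) identifies the image under $\phi_{\mathbf{k}}$ with the orbit space.

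The main obstacle will be constructing the equivariant semi-algebraic vector field $\xi$ tangent to $W$ in a uniformly non-degenerate way while preserving semi-algebraicity near $W$ and ensuring that $\xi(F)$ stays bounded away from zero on $F^{-1}([c_i+\delta,c])$ for small $\delta$. Once $\xi$ is in hand, the integration of the flow and the descent to the quotient are routine, and Tarski--Seidenberg transfer completes the extension from $\mathbb{R}$ to general $\R$.
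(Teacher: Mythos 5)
Your proposal is correct and follows essentially the same route as the paper: the paper's proof of this lemma is exactly the observation that the (gradient-flow) retraction of $S_{\leq c}$ onto $S_{\leq c_i}$ from standard Morse Lemma A can be chosen $\mathfrak{S}_{\mathbf{k}}$-equivariantly (since $F=e_1$ and $P$ are symmetric) and therefore descends through $\phi_{\mathbf{k}}$ to the orbit space, with the passage from $\mathbb{R}$ to a general real closed field handled by Tarski--Seidenberg transfer as in the proof of the companion Lemma \ref{lem:equivariant_morseB}. The extra details you supply (tangential modification of $-\nabla F$ along $W$ via $\nabla P$, equivariant partition of unity, time-normalized flow) are just the standard construction the paper implicitly invokes.
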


\begin{proof}The lemma is an equivariant version of the standard
Morse Lemma A. It follows from the fact that the gradient flow, which gives a
retraction of $S_{\leq c}$ to $S_{\leq c_{i}}$, is equivariant, and thus
descends to give a retraction of $\phi_{\mathbf{k}} (S_{\leq c})$ to
$\phi_{\mathbf{k}} (S_{\leq c_{i}})$.\end{proof}

We also need the following equivariant version of Morse Lemma B.

Using the same notation as in Proposition \ref{prop:betti_bound}:

\begin{lemma}
  \label{lem:equivariant_morseB}
  Let $G_c^-$ denote a set of representatives of orbits of critical points $\x$  of  $F$ restricted to $W$ with $F(\x) = c$, and 
\begin{eqnarray}
\label{eqn:equivariant_morseB}
    \sum_{1 \leq i \leq k} \dfrac{\partial P}{\partial X_{i}} (\x)  &<& 0.
\end{eqnarray}
  
    Then, for for all small enough $t>0$,
    
    \begin{enumerate}[A.]
     \item 
    \label{itemlabel:lem:equivariant_morseB:1}
    
    \begin{eqnarray}
    \label{eqn:inequality1}
      b (\phi_{\mathbf{k}} (S_{\leq c}) ,\F) & = & b (
      \phi_{\mathbf{k}} (S_{\leq c-t}) ,\F) + \card(G_c^-).
    \end{eqnarray}
    
    \hide{
    \item 
    \label{itemlabel:lem:equivariant_morseB:2}
    Suppose that for each critical point $\x \in W$, with $F (\x) =c$, 
    \[
    \sum_{1 \leq i \leq k} \dfrac{\partial P}{\partial X_{i}} (\x)  <0.
    \]
    Then,  for all small enough $t>0$,
    \begin{eqnarray}
    \label{eqn:inequality2}
      b (\phi_{\mathbf{k}} (S_{\leq c+t}) ,\F) & \leq & b (
      \phi_{\mathbf{k}} (S_{\leq c}) ,\F) +1.
      \end{eqnarray}
   }
    
    \item 
    \label{itemlabel:lem:equivariant_morseB:3}
    Moreover, 
    \begin{eqnarray}
      b_{i} (\phi_{\mathbf{k}} (S_{\leq c}) ,\F) & = & b_{i} (
      \phi_{\mathbf{k}} (S_{\leq c-t}) ,\F)  \label{eqn:inequality3}
    \end{eqnarray}
    for all $i  \geq \max_{\x \in G_c^-}  \length (\boldpi(\x))$.
  \end{enumerate}
\end{lemma}

\begin{proof} We first prove the proposition for $\R =
\mathbb{R}$. 
We will also assume that the function $F$ takes distinct values on the distinct orbits
  of the critical points of $F$ restricted to $W$ for ease of exposition of the proof.  Since
  the topological changes at the critical values are local near the critical points which are assumed to be isolated, the general case follows easily using a standard partition of unity argument. 
 Also, note that the value of $\sign(\sum_{1 \leq i \leq k} \dfrac{\partial P}{\partial X_{i}} (\x) )$
 are equal for all critical points $\x$ belonging to one orbit.\\

 \noindent{Proof of Part \eqref{itemlabel:lem:equivariant_morseB:1}:}  
 If 
 \[
 \sum_{1 \leq i \leq k} \dfrac{\partial P}{\partial X_{i}} (\x) 
  >0,
  \]
  then $S_{\leq c}$ retracts
  $\mathfrak{S}_{\mathbf{k}}$-equivariantly to a space $S_{\leq c-t} \cup_{B}
  A$ where the pair $(A,B) = \coprod_{\x} ( A_{\x} ,B_{\x})$,  and where the
  disjoint union is taken over the set critical points $\x$ with $F (\x) =c$,
  and each pair $(A_{\x} ,B_{\x})$ is homeomorphic to the pair $(
  \mathbf{D}^{i} \times [ 0,1 ] , \partial \mathbf{D}^{i} \times [ 0,1 ] \cup
  \mathbf{D}^{i} \times \{ 1 \})$,
  where $i$ is the dimension of the negative eigenspace of the Hessian  of the
  function $e_1^{(k)}$ restricted to $W$ at $\x$.
  This follows from the basic Morse theory
  (see {\cite[Proposition 7.19]{BPRbook2}}). Since the pair $(
  \mathbf{D}^{i} \times [ 0,1 ] , \partial \mathbf{D}^{i} \times [ 0,1 ] \cup
  \mathbf{D}^{i} \times \{ 1 \})$ is homotopy equivalent to $(\ast , \ast
 )$, $S_{\leq c-t}$ is homotopy equivalent to $S_{\leq c}$, and it follows
  that $\phi_{\mathbf{k}} (S_{\leq c-t})$ is homotopy equivalent to
  $\phi_{\mathbf{k}} (S_{\leq c})$ as well, because of the fact that
  retraction of $S_{\leq c}$ to $S_{\leq c-t} \cup_{B} A$ is chosen to be
  equivariant. The equality \eqref{eqn:inequality1} then follows immediately,
  since $G_c^-$ is empty in this case.
  
  We now consider the case when $\sum_{1 \leq i \leq k} \dfrac{\partial
  P}{\partial X_{i}} (\x)  < 0$. Let $T_{\x} W$ be the tangent space of $W$ at
  $\x$. The translation of $T_{\x} W$ to the origin is then the linear subspace
  $L \subset \R^{k}$ defined by $\sum_{i} X_{i} =0$. Let $L^{+} (\x) \subset
  L$ and $L^{-} (\x) \subset L$ denote the positive and negative eigenspaces
  of the Hessian of the function $e_{1}^{(k)}$ restricted to $W$ at $\x$.
  Let $\ind^{-} (\x) = \dim  L^{-} (\x)$, and let $\x  \in
  L_{\boldpi}$ where $\boldpi= (\pi^{(1)} , \ldots , \pi^{(
  \omega)}) \in \boldPi_{\mathbf{k}}$, where for each $i,1 \leq i
  \leq \omega$, $\pi^{(i)} = (\pi^{(i)}_{1} , \ldots , \pi^{(i
 )}_{\ell_{i}}) \in \Pi_{k_{i}}$. The subspaces $L^{+} (\x) ,L^{-} (\x
 )$ are stable under the the natural action of the subgroup $\prod_{1 \leq i
  \leq \omega ,1 \leq j \leq \ell_{i}} \mathfrak{S}_{\pi^{(i)}_{j}}$ of
  $\mathfrak{S}_{\mathbf{k}}$. For $1 \leq i \leq \omega ,1 \leq j \leq
  \ell_{i}$, let $L^{(i)}_{j}$ denote the subspace $L \cap L_{\pi^{(i
 )}_{j}}$ of $L$, and $M^{(i)}_{j}$ the orthogonal complement of $L^{(i
 )}_{j}$ in $L$. Let $L_{\fixed} =L \cap L_{\boldpi}$. It
  follows from 
  Parts \eqref{itemlabel:prop:orthogonal:2}, \eqref{itemlabel:prop:orthogonal:3}, and\eqref{itemlabel:prop:orthogonal:4}
  of Proposition \ref{prop:orthogonal} that:
  \begin{enumerate}[i]
    \item For each $i,j, \,1 \leq i \leq \omega ,1 \leq j \leq
    \ell_{i}$, $M^{(i)}_{j}$ is an irreducible representation of
    $\mathfrak{S}_{\pi_{i}}$, and the action of $\mathfrak{S}_{\pi^{(i'
   )}_{j'}}$ on $M^{(i)}_{j}$ is trivial if $(i,j) \neq (i' ,j')$.
    Hence, for each $i,   j   ,1 \leq i \leq \omega ,1 \leq j
    \leq \ell_{i}$, $L^{-} (p) \cap M^{(i)}_{j} =0  \tmop{or}  M^{(i
   )}_{j}$.
    
    \item The subgroup $\prod_{1 \leq i \leq \omega ,1 \leq j \leq \ell_{i}}
    \mathfrak{S}_{\pi^{(i)}_{j}}$ of $\mathfrak{S}_{\mathbf{k}}$ acts
    trivially on $L_{\fixed}$.
    
    \item There is an orthogonal decomposition $L=L_{\fixed} \oplus
    \left(\bigoplus_{1 \leq i \leq \omega ,1 \leq j \leq \ell_{i}} M^{(i
   )}_{j} \right)$.
  \end{enumerate}
  It follows that
  \begin{eqnarray*}
    L^{-} (p) & = & L'_{\fixed}   \oplus \left(\bigoplus_{(i,j) \in
    I} M^{(i)}_{j} \right) ,
  \end{eqnarray*}
  where $L'_{\fixed}$ is some subspace of $L_{\fixed}$ and $I
  \subset \{ (i,j) \mid 1 \leq i \leq \omega ,1 \leq j \leq \ell_{i} \}$.
  
  It follows from the proof of Proposition 7.19 in {\cite{BPRbook2}} that
  for all sufficiently small $t >0$ then $S_{\leq c}$ is retracts
  $\mathfrak{S}_{\mathbf{k}}$-equivariantly to a space $S_{\leq c-t} \cup_{B}
  A$ where the pair $(A,B) = \coprod_{\x} ( A_{\x} ,B_{\x})$, and the
  disjoint union is taken over the set critical points $\x$ with $F (\x) =c$,
  and each pair $(A_{\x} ,B_{\x})$ is homeomorphic to the pair $(\mathbf{D}^{\ind^{-} (\x)} , \partial \mathbf{D}^{\ind^{-} (\x
 )})$. It follows from the fact that the retraction mentioned above is
  equivariant that $\phi_{\mathbf{k}} (S_{\leq c})$ retracts to a space
  obtained from $\phi_{\mathbf{k}} (S_{\leq c-t})$ by gluing
  $\orbit_{\mathfrak{S}_{\mathbf{k}}} \left(\coprod_{\x} A_{\x} \right)$
  along $\orbit_{\mathfrak{S}_{\mathbf{k}}} \left(\coprod_{\x} B_{\x}
  \right)$. Now there are the following cases to consider:
  \begin{enumerate}[(a)]
    \item $\ind^{-} (\x) =0$. In this case
    \[
    \orbit_{\mathfrak{S}_{\mathbf{k}}} (\coprod_{\x} A_{\x},
    \coprod_{\x} B_{\x})
    \] 
    is homotopy equivalent to $(\ast , \emptyset)$.
    
    \item $L^{-} (\x)   \subset L_{\fixed}$ (i.e. $I= \emptyset$ in
    this case). In this case 
    \[
    \orbit_{\mathfrak{S}_{\mathbf{k}}}
    (\coprod_{\x} A_{\x} , \coprod_{\x} B_{\x})
    \] 
    is homeomorphic to
    $(\mathbf{D}^{\ind^{-} (\x)} , \partial \mathbf{D}^{\ind^{-}
    (\x)})$ by 
    Part \eqref{itemlabel:prop:orthogonal:5}
    of Proposition \ref{prop:orthogonal}.
    
    \item Otherwise, there is a non-trivial action on $L^{-} (\x)$ 
    of  the group 
    \[
    \prod_{(i,j) \in I} \mathfrak{S}_{\pi^{(i)}_{j}},
    \] 
    and it
    follows from 
    Part  \eqref{itemlabel:prop:orthogonal:5}
    of Proposition \ref{prop:orthogonal} that in this
    case 
    \[
    \orbit_{\mathfrak{S}_{\mathbf{k}}}(\coprod_{\x} A_{\x}, \coprod_{\x} B_{\x})
    \]
     is homotopy equivalent to $(\ast , \ast)$.
  \end{enumerate}
  The inequality \eqref{eqn:inequality1} 
  follow immediately from 
  inequality \eqref{eqn:MV2}.
  
  \noindent{Proof of Part \eqref{itemlabel:lem:equivariant_morseB:3}:}
  Follows from Part \eqref{itemlabel:prop:orthogonal:5}
  of Proposition \ref{prop:orthogonal}, and
 the fact that 
  \[
  \dim  L_{\fixed} = \length (\boldpi) -1,
  \] 
  by 
  Part \eqref{itemlabel:prop:orthogonal:1}
  of Proposition \ref{prop:orthogonal}.

This finishes the proof in case $\R =\mathbb{R}$. The statement over a
general real closed field $\R$ now follows by a standard application of the
Tarski-Seidenberg transfer principle (see for example the proof of Theorem
7.23 in {\cite{BPRbook2}}).\end{proof}

The proof of Lemma  \ref{lem:equivariant_morseB} is illustrated by the following simple
example.

\begin{figure}[h]
 % \resizebox{5in}{5in}
  \includegraphics[width=31pc]{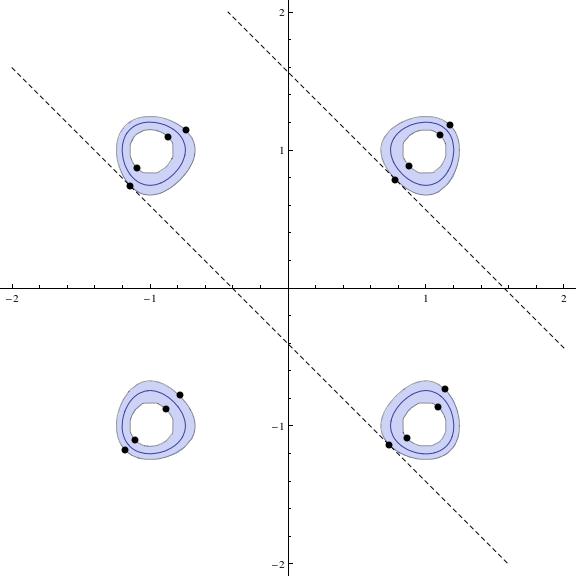}
\caption{\label{fig:figure1}  The real variety $\ZZ \left(P, \R^{2} \right)$, and the set defined by  $\Def (P, \zeta ,6) \leq 0$, in Example \ref{ex:example1}.}
\end{figure}

\begin{example}
\label{ex:example1}
  In this example, the number of blocks $\omega =1$, and $k=k_{1} =2$. 
  Consider the polynomial
  \begin{eqnarray*}
    P & = & (X_{1}^{2} -1)^{2} + (X_{2}^{2} -1)^{2} - \eps ,
  \end{eqnarray*}
  for some small $\eps >0$. The sets $\ZZ \left(P, \R^{2} \right)$, and $S=
  \left\{ x \in \R \langle \zeta \rangle^{2} \mid \bar{P} \leq 0 \right\}$,
  where $\bar{P} =  \Def (P, \zeta ,6)$ is shown in the Figure
  \ref{fig:figure1}.
  
  The polynomial $e_{1} (X_{1} ,X_{2}) =X_{1} +X_{2}$ has $16$ critical
  points, corresponding to $12$ critical values, $v_{1} < \cdots <v_{12}$, on
  $\ZZ \left(\bar{P} , \R \langle \zeta \rangle^{2} \right)$ of which $v_{5}$
  and $v_{9}$ are indicated in Figure \ref{fig:figure1} using dotted lines.
  The corresponding indices of the critical points, the number of critical
  points for each critical value, the sign of the polynomial $\dfrac{\partial
  \bar{P}}{\partial X_{1}} + \dfrac{\partial \bar{P}}{\partial X_{2}}$ at
  these critical points, and the partition $\pi \in \Pi_{2}$ such that the
  corresponding critical points belong to $L_{\pi}$ are shown in Table
  \ref{tab:table1}. The critical points corresponding to the shaded rows are
  the the critical points where $\left(\dfrac{\partial \bar{P}}{\partial
  X_{1}} + \dfrac{\partial \bar{P}}{\partial X_{2}} \right) <0$, and 
  these are the critical points whose orbits are represented in the sets $G_c^-$ in 
  Lemma \ref{lem:equivariant_morseB} above.
 
   {\tiny
  \begin{table}[h]
    \begin{tabular}{|l|l|l|l|l|l|l|}
      \hline
      Critical values & Index  & $ \SIGN \left(
      \dfrac{\partial \bar{P}}{\partial X_{1}} + \dfrac{\partial
      \bar{P}}{\partial X_{2}} \right)$ & $\pi$ & $L^{-} (p)$ &
      $L_{\fixed}$ & $L^{-} (p) \subset
      L_{\fixed}$\\
      \hline
      \rowcolor{LightCyan}$v_{1}$ & 0 & $-1$ & $(2)$ & $0$ & $0$ & yes\\
      \hline
      $v_{2}$ & 0 & $1$ & $(2)$ & 0 & $0$ & yes\\
      \hline\rowcolor{LightCyan}
      $v_{3}$ & 1 & $-1$ & $(2)$ & $L$ & $0$ & no\\
      \hline
      $v_{4}$ & 1 & $1$ & $(2)$ & $L$ & $0$ & no\\
      \hline\rowcolor{LightCyan}
      $v_{5}$ & $0$ & $-1$ & $(1,1)$ & $0$ & $L$ & yes\\
      \hline
      $v_{6}$ & 0 & 1 & $(1,1)$ & $0$ & $L$ & yes\\
      \hline\rowcolor{LightCyan}
      $v_{7}$ & $1$ & $-1$ & $(1,1)$ & $L$ & $L$ & yes\\
      \hline
      $v_{8}$ & $1$ & $1$ & $(1,1)$ & $L$ & $L$ & yes\\
      \hline\rowcolor{LightCyan}
      $v_{9}$ & $0$ & $-1$ & $(2)$ & $0$ & $0$ & yes\\
      \hline
      $v_{10}$ & 0 & $1$ & $(2)$ & $0$ & $0$ & yes\\
      \hline\rowcolor{LightCyan}
      $v_{11}$ & $1$ & $-1$ & $(2)$ & $L$ & $0$ & no\\
      \hline
      $v_{12}$ & $1$ & 1 & $(2)$ & $L$ & $0$ & no\\
      \hline
    \end{tabular}
    \caption{\label{tab:table1} Table of critical values in Example \ref{ex:example1}.}
  \end{table}
  }
  
  \end{example}

\begin{proof}[Proof of Proposition \ref{prop:betti_bound}] The
proposition follows directly from Lemmas \ref{lem:equivariant_morseA} and
\ref{lem:equivariant_morseB}, after noting that at most half the critical
values of $F$ satisfy 
\eqref{eqn:equivariant_morseB}
of Lemma
\ref{lem:equivariant_morseB}.
\end{proof}

\section{Proofs of the main theorems}\label{sec:proofs}

We are now in a position to prove the main theorems.

\subsection{Proof of Theorem \ref{thm:main}}
\begin{proof}[Proof of Theorem \ref{thm:main}] By Remark
\ref{rem:non-symm-to-symm}, we can assume without loss of generality that $P$
is symmetric in each block of variables $X^{(1)} , \ldots ,X^{(\omega)}$.
We first assume that $\ZZ \left(P, \R^{k} \right)$ is bounded. Let $d'$ be
the least even number such that $d' >d= \deg (P)$ and such that $d' -1$ is
prime. By Bertrand's postulate we have that $d'   \leq 2d$. Now, if $p$
divides $k$, replace $P$ by the polynomial
\[ P  \noplus \noplus +X_{k+1}^{2} ,  \]
and let $\omega' = \omega +1$, $k' =k+1$, and $\mathbf{k}' = (\mathbf{k},1
)$. Otherwise, let $\omega' = \omega +1$, $k' =k$, and $\mathbf{k}' = (
\mathbf{k},0)$. In either case, we have that $\gcd (p,k') =1$, and $k'
\leq k+1$.

Using Proposition \ref{prop:alg-to-semialg},
\begin{eqnarray*}
  b (\phi_{\mathbf{k}} (V) ,\F) & = & b (\phi_{\mathbf{k}'} (
  S) ,\F)
\end{eqnarray*}
where $S$ is the semi-algebraic set defined by $\Def (P, \zeta ,d') \leq
0$. It now follows from Propositions \ref{prop:non-degenerate},
\ref{prop:half-degree}, \ref{prop:betti_bound} , and Bezout's theorem that
\begin{eqnarray*}
  b (\phi_{\mathbf{k}'} (S) ,\F) & \leq & \frac{1}{2} 
  \sum_{\substack{\boldpi= (\pi^{(1)} , \ldots , \pi^{(\omega')}) \in
  \boldPi_{\mathbf{k}'}, \\ \length (\pi^{(i)}) \leq d', \\
  1 \leq i \leq \omega'}}  d' (d' -1)^{\length (\boldpi) -1} .
\end{eqnarray*}

After noting that using Bertrand's postulate $d' \leq 2d$, and using the
fact that $k' \leq k+1$, we obtain that in the bounded case,
\begin{eqnarray*}
  b (\phi_{\mathbf{k}} (V) ,\F) & \leq & \sum_{\substack{\boldsymbol{\ell}'
  = (\ell_{1} , \ldots , \ell_{\omega'}), \\
  1 \leq \ell_{i} \leq \min (
  k_{i},2d)}} p (\mathbf{k}' ,\boldsymbol{\ell}')  d (2d-1)^{| \boldsymbol{\ell}' |
  -1}
\end{eqnarray*}

Eqn. \eqref{eqn:thm:main1} follows from Eqn. \eqref{eqn:prop-main} in Proposition
\ref{prop:betti_bound}.

To take of the possibly unbounded case we introduce a new variable $Z$, and
let
\begin{eqnarray*}
  P_{1} & = & P+ \left(Z^{2} + \sum_{i=1}^{k} X_{i}^{2} + \sum_{j=1}^{m}
  Y_{j}^{2}  - \Omega^{2} \right)^{2}   ,\\
  P_{2} & = & P+ \left(\sum_{i=1}^{k} X_{i}^{2} + \sum_{j=1}^{m} Y_{j}^{2}  -
  \Omega^{2} \right)^{2} .
\end{eqnarray*}
Notice that, $V_{1} = \left(\ZZ \left(P_{1} , \R \left\langle
\tfrac{1}{\Omega} \right\rangle^{k+1} \right) \right)$ is semi-algebraically
homeomorphic to two homeomorphic copies of $V_{2} = \left(\ZZ \left(P_{2} ,
\R \left\langle \tfrac{1}{\Omega} \right\rangle^{k} \right) \right)$ glued
along $V$.

Using the fact that the map $\phi_{\mathbf{k}}$ is proper, it now follows from
inequality \eqref{eqn:MV2} that
\begin{eqnarray*}
  b (\phi_{\mathbf{k}} (V) ,\F) & \leq & \frac{1}{2} (b (
  \phi_{(\mathbf{k},1)} (V_{1}) ,\F) +b (\phi_{\mathbf{k}} (
  V_{2}) ,\F)) .
\end{eqnarray*}
Noticing that both $\ZZ \left(P_{1} , \R \left\langle \tfrac{1}{\Omega}
\right\rangle^{k+1} \right)$ and $\ZZ \left(P_{2} , \R \left\langle
\tfrac{1}{\Omega} \right\rangle^{k} \right)$ are bounded, we can use the
result from the bounded case and obtain in general that for $d \geq 4$,
\begin{eqnarray*}
  b (\phi_{\mathbf{k}} (V) ,\F) & \leq & \sum_{\substack{\boldsymbol{\ell}'
  = (\ell_{1} , \ldots , \ell_{\omega'}), \\1 \leq \ell_{i} \leq \min (
  k_{i},2d)} }(p (\mathbf{k}' ,\boldsymbol{\ell}'))  d (2d-1)^{| \boldsymbol{\ell}'
  |}\\
  & \leq & \sum_{\substack{\boldsymbol{\ell}= (\ell_{1} , \ldots , \ell_{\omega}), \\
  1 \leq \ell_{i} \leq \min (k_{i},2d)}}
   (p (\mathbf{k},\boldsymbol{\ell}))  d (
  2d-1)^{| \boldsymbol{\ell} | +1} ,
\end{eqnarray*}
where the last inequality follows from the fact that $\omega' = \omega +1$, and
$k_{\omega'} =1$. 
\end{proof}

\subsection{Proof of Theorem \ref{thm:main-sa-closed}}

\begin{definition}
  For any finite family $\mathcal{P} \subset \R [ X_{1} , \ldots ,X_{k} ]$ and
  $\ell \geq 0$, we say that $\mathcal{P}$ is in $\ell$-general position with
  respect to a semi-algebraic set $V \subset \R^{k}$ if for any subset
  $\mathcal{P}' \subset \mathcal{P}$, with $\card (
  \mathcal{P}') > \ell$, $\ZZ (\mathcal{P}' ,V) = \emptyset$. 
\end{definition}

Let $\mathbf{k}= (k_{1} , \ldots ,k_{\omega})$ with $k=
\sum_{i=1}^{\omega} k_{i}$, and 
\[
\mathcal{P} = \{ P_{1} , \ldots ,P_{s} \} \subset \R [\X^{(1)} , \ldots,\X^{(\omega)} ]^{\mathfrak{S}_\kk}
\]
be a fixed finite set of polynomials where $\X^{(i)}$ is a block of $k_{i}$
variables.
Let $\deg (P_{i}) \leq d$ for $1 \leq
i \leq s$. Let $\overline{\eps} = \left(\eps_{1} , \ldots , \eps_{s} \right)$
be a tuple of new variables, and let $\mathcal{P}_{\overline{\eps}} =
\bigcup_{1 \leq i \leq s} \left\{ P_{i}   \pm \eps_{i} \right\}$. We have the
following two lemmas.

\begin{lemma}
  \label{lem:gen-pos1-with-parameters}Let
  \begin{eqnarray*}
    D' (\mathbf{k},d) & = & \sum_{i=1}^{\omega} \min (k_{i} ,d) .
  \end{eqnarray*}
  The set of polynomials  $\mathcal{P}_{\overline{\eps}} \subset \R' [
  \X^{(1)} , \ldots ,\X^{(
  \omega)} ]$ is in $D'$-general position for any semi-algebraic subset $Z
  \subset \R^{k}$ stable under the action of $\mathfrak{S}_{\mathbf{k}}$,
  where $\R' = \R \langle \overline{\eps} \rangle$.
\end{lemma}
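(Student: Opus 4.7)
The plan is to argue by contradiction. Suppose $\mathcal{P}' \subset \mathcal{P}_{\overline{\eps}}$ satisfies $\mathrm{card}(\mathcal{P}') = t > D'$ and $\ZZ(\mathcal{P}', \Ext(Z, \R')) \neq \emptyset$. First I would observe that $\mathcal{P}'$ cannot contain both $P_i + \eps_i$ and $P_i - \eps_i$ for any $i$, since their difference is $2\eps_i$, a unit in $\R'$, whose vanishing set is empty. Hence, after relabeling, $\mathcal{P}' = \{P_{i_j} + \delta_j \eps_{i_j}\}_{j=1}^t$ with the $i_j$ pairwise distinct and $\delta_j \in \{+1,-1\}$.

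Next I would invoke the fundamental theorem of symmetric polynomials block by block. Since each $P_{i_j}$ is symmetric in each block $\ensuremath{\boldsymbol{X}}^{(i)}$ with $\deg_{X^{(i)}}(P_{i_j}) \leq d$, there exists a polynomial $G_{i_j} \in \R[\ensuremath{\boldsymbol{Z}}^{(1)}, \ldots, \ensuremath{\boldsymbol{Z}}^{(\omega)}]$, where each $\ensuremath{\boldsymbol{Z}}^{(i)}$ is a block of $\min(k_i,d)$ variables, such that
\[
P_{i_j}(\ensuremath{\boldsymbol{X}}^{(1)},\ldots,\ensuremath{\boldsymbol{X}}^{(\omega)}) = G_{i_j}\bigl(p_1^{(k_1)},\ldots,p_{\min(k_1,d)}^{(k_1)},\ldots,p_1^{(k_\omega)},\ldots,p_{\min(k_\omega,d)}^{(k_\omega)}\bigr).
\]
Thus $P_{i_j} = G_{i_j}\circ \rho$, where $\rho : \R^{k} \longrightarrow \R^{D'}$ is the ``truncated power-sum'' map. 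Any common zero $\mathbf{x} \in \Ext(Z,\R')$ of $\mathcal{P}'$ would then yield $\mathbf{z}^* := \rho(\mathbf{x}) \in \R'^{D'}$ with $G_{i_j}(\mathbf{z}^*) = -\delta_j \eps_{i_j}$ for each $j$.

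Consequently, writing $G = (G_{i_1},\ldots,G_{i_t}) : \R^{D'} \longrightarrow \R^{t}$, the point $\mathbf{e} := (-\delta_1 \eps_{i_1}, \ldots, -\delta_t \eps_{i_t}) \in \R'^{t}$ belongs to $\Ext(G(\R^{D'}), \R')$. By Tarski--Seidenberg, $G(\R^{D'}) \subset \R^{t}$ is semi-algebraic of dimension at most $D' < t$, hence is contained in the zero locus of some nonzero polynomial $F \in \R[z_1,\ldots,z_t]$ (its Zariski closure being a proper algebraic subvariety of $\R^{t}$). Extending scalars, $F$ vanishes on $\Ext(G(\R^{D'}),\R')$, so $F(\mathbf{e}) = 0$.

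The final step is to invoke algebraic independence: the $\eps_{i_j}$ (for distinct $i_j$) are algebraically independent over $\R$ in $\R' = \R\langle\eps_1,\ldots,\eps_s\rangle$, so $F(\mathbf{e}) = 0$ forces $F \equiv 0$, a contradiction. The one step warranting care is the containment of a dimension-$<t$ semi-algebraic subset of $\R^{t}$ in a proper algebraic hypersurface, but this follows from the standard equality between the semi-algebraic dimension over $\R$ and the dimension of the Zariski closure over $\C$. I expect no substantive obstacle beyond organizing the reduction in the first two steps and citing the dimension-closure fact.
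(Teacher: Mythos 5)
Your proof is correct and follows essentially the same route as the paper: both reduce, via writing each symmetric $P_i$ as a polynomial in the first $\min(k_i,d)$ power sums of each block, to the claim that more than $D'$ polynomials of the form $F_i \pm \eps_i$ cannot have a common zero in $\R'^{D'}$. The only difference is that the paper asserts this final claim without argument, whereas you justify it explicitly (image of $\R^{D'}$ under $(F_{i_1},\ldots,F_{i_t})$ has dimension at most $D'<t$, hence lies in a proper hypersurface, contradicting the algebraic independence of the $\eps_{i_j}$ over $\R$), which is a valid filling-in of that step.
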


\begin{proof}
Using Lemma \ref{lem:multisymmetric-polynomial}
for each $i,1 \leq i \leq s$,  there exists
$\widetilde{P}_{i} \in \R [\ZB^{(1)} , \ldots ,\ZB^{(\omega)} ]$, where each $Z^{(
i)}$ is a block of $\ell_{i} = \min (k_{i} ,d)$ variables such that
\begin{eqnarray*}
  P_{i} & = & \widetilde{P}_{i} ((p_{1}^{(k_{1})} (\X^{(1)}) , \ldots
  ,p_{\ell_{1}}^{(k_{1})} (\X^{(1)})) , \ldots , (p_{1}^{(
  k_{\omega})} (\X^{(\omega)}) , \ldots ,p_{\ell_{\omega}}^{(
  k_{\omega})} (\X^{(\omega)}))) .
\end{eqnarray*}

Clearly,
$$\displaylines{
  P_{i} \pm \eps_{i}  =  \cr
   \widetilde{P}_{i} ((p_{1}^{(k_{1})} (\X^{(1)}
 ) , \ldots ,p_{\ell_{1}}^{(k_{1})} (\X^{(1)})) , \ldots , (
  p_{1}^{(k_{\omega})} (\X^{(\omega)}) , \ldots
  ,p_{\ell_{\omega}}^{(k_{\omega})} (\X^{(\omega)}))) \pm
  \eps_{i} .
}
$$
Since no sub-collection of the polynomials $\bigcup\limits_{1 \leq i \leq s} \left\{
\widetilde{P}_{i} \pm \eps_{i} \right\}$ of cardinality at least 
\[
1+\sum\limits_{i=1}^{\omega} \min
(k_{i} ,d)=D' +1
\] 
can have a common zero in $\R'^{D'}$, the lemma
follows.\end{proof}

Let $\Phi$ be a $\mathcal{P}$-closed formula, and let $S= \RR (\Phi ,V)$ be
bounded over $\R$. Let $\Phi_{\overline{\eps}}$ be the
$\mathcal{P}_{\overline{\eps}}$-closed formula obtained from $\Phi$ be
replacing for each $i,1 \leq i \leq s$,
\begin{enumerate}[i.]
  \item each occurrence of $P_{i} \leq 0$ by $P_{i} - \eps_{i} \leq 0
   $, and
  
  \item each occurrence of $P_{i} \geq 0$ by $P_{i} + \eps_{i} \geq 0
   $.
\end{enumerate}
Let $\R' = \R \left\langle \eps_{1} , \ldots , \eps_{s} \right\rangle$, and \
$S_{\overline{\eps}} = \RR \left(\Phi_{\overline{\eps}} , \R'^{k} \right)$.

\begin{lemma}
  \label{lem:gen-pos2-with-parameters} For any $r>0$, $r \in \R$, the
  semi-algebraic set  
  $\Ext(S \cap \overline{B_{k} (0,r)} , \R')$ is contained in 
  $S_{\overline{\eps}} \cap \overline{B_{k} (0,r)}$, and the inclusion 
  $\Ext(S \cap \overline{B_{k} (0,r)} , \R') \hookrightarrow S_{\overline{\eps}} \cap
  \overline{B_{k} (0,r)}$ is a semi-algebraic homotopy equivalence.
  
  Moreover, 
$\Ext(S \cap \overline{B_{k} (0,r)} , \R')
  /\mathfrak{S}_{\mathbf{k}}\subset (S_{\overline{\eps}} \cap
  \overline{B_{k} (0,r)}) /\mathfrak{S}_{\mathbf{k}}$, and the
  inclusion map 
  $\Ext(S \cap \overline{B_{k} (0,r)} , \R')/\mathfrak{S}_{\mathbf{k}} 
  \hookrightarrow \left(
  S_{\overline{\eps}} \cap \overline{B_{k} (0,r)} \right)
  /\mathfrak{S}_{\mathbf{k}}$ is a semi-algebraic homotopy equivalence.
\end{lemma}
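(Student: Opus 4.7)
The plan has three parts corresponding to the three assertions of the lemma. First, the containment $\Ext(S \cap \overline{B_{k}(0,r)}, \R') \subset S_{\overline{\eps}} \cap \overline{B_{k}(0,r)}$ is immediate from the construction of $\Phi_{\overline{\eps}}$: for any $x \in \Ext(S, \R')$, each atom of $\Phi$ holds at $x$, and because each $\eps_i$ is a positive infinitesimal in $\R' = \R\langle \eps_{1}, \ldots, \eps_{s}\rangle$, an atom $P_i \le 0$ of $\Phi$ yields $P_i(x) - \eps_i \le -\eps_i < 0$, while $P_i \ge 0$ yields $P_i(x) + \eps_i \ge \eps_i > 0$. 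Hence $x$ satisfies $\Phi_{\overline{\eps}}$. This pointwise argument is $\mathfrak{S}_{\mathbf{k}}$-equivariant (each $P_i$ is block-symmetric and the $\eps_i$ are scalars), so the corresponding containment at the quotient level is automatic.

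For the homotopy equivalence of the ambient sets, I would apply the standard semi-algebraic perturbation argument from \cite{BPRbook2} (compare Proposition 7.39 there). The idea is to introduce real parameters $\overline{t} = (t_{1}, \ldots, t_{s})$, let $\Phi_{\overline{t}}$ denote the formula obtained from $\Phi_{\overline{\eps}}$ by substituting $t_{i}$ for $\eps_{i}$, and consider the semi-algebraic family
\[
T = \left\{ (x, \overline{t}) \in \overline{B_{k}(0,r)} \times [0, t_{0}]^{s} \mid x \in \RR(\Phi_{\overline{t}}, \R^{k}) \right\}
\]
together with the projection $\pi \colon T \to [0, t_{0}]^{s}$. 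The fiber over $\overline{t} = 0$ is $S \cap \overline{B_{k}(0,r)}$, and after base change to $\R'$ the fiber over $\overline{t} = \overline{\eps}$ is $S_{\overline{\eps}} \cap \overline{B_{k}(0,r)}$. Applying Hardt's semi-algebraic triviality theorem to $\pi$ (with $t_{0}$ sufficiently small), together with the standard fact that $S \cap \overline{B_{k}(0,r)}$ is a semi-algebraic deformation retract of $\RR(\Phi_{\overline{t}}, \R^{k}) \cap \overline{B_{k}(0,r)}$ for all small $\overline{t}$, yields a semi-algebraic deformation retraction, which transferred along $\R \hookrightarrow \R'$ realizes the first homotopy equivalence.

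For the equivariant claim, the crucial observation is that every object in this construction is compatible with the $\mathfrak{S}_{\mathbf{k}}$-action: the polynomials $P_i$ are block-symmetric, the ball $\overline{B_{k}(0,r)}$ is $\mathfrak{S}_{\mathbf{k}}$-stable, and $[0, t_{0}]^{s}$ carries the trivial action. Therefore $T$ inherits a diagonal $\mathfrak{S}_{\mathbf{k}}$-action and $\pi$ is equivariant. The main step of the plan, and the main obstacle to anticipate, is to upgrade the deformation retraction from the previous step to an equivariant one: Hardt's theorem does not a priori produce equivariant trivializations, but for a finite group such as $\mathfrak{S}_{\mathbf{k}}$ acting semi-algebraically on bounded sets, equivariant semi-algebraic triangulation of the pair $(T, \pi)$ furnishes an $\mathfrak{S}_{\mathbf{k}}$-equivariant trivialization, and hence an equivariant semi-algebraic deformation retraction. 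Passing to the quotient by $\mathfrak{S}_{\mathbf{k}}$ then produces the second homotopy equivalence, completing the plan.
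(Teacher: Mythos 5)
Your plan is, for the non-equivariant half, exactly the argument the paper appeals to: the paper's entire proof is one sentence referring to Lemma 16.17 of \cite{BPRbook2}, i.e.\ the parametrized-relaxation argument you sketch (the monotone family $S_{\overline{t}}$, Hardt semi-algebraic triviality over the parameter space, and a deformation retraction obtained by letting $\overline{t}\rightarrow 0$, then transferred to $\R\left\langle \eps_{1},\ldots,\eps_{s}\right\rangle$), and your containment argument is the same monotonicity observation, valid because a $\mathcal{P}$-closed formula has no negations. Two comments on where you diverge or should tighten the plan. First, the infinitesimals are ordered, $\eps_{1}\gg\cdots\gg\eps_{s}$, and Hardt's theorem only trivializes the family over finitely many semi-algebraic pieces of $(0,t_{0}]^{s}$, not over the punctured cube with a single $t_{0}$; the clean way to carry out your step is to eliminate one infinitesimal at a time (or to work over the piece whose extension contains $\overline{\eps}$ and whose closure contains $0$), which is the standard bookkeeping in the cited lemma; also note that the ``standard fact'' you quote about the retraction for small real $\overline{t}$ is precisely what the Hardt argument is proving, so it should not be invoked as a separate ingredient. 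Second, for the quotient statement you propose equivariant semi-algebraic triangulation to upgrade the trivialization to an $\mathfrak{S}_{\mathbf{k}}$-equivariant one. That is a legitimate route (such equivariant triangulation results exist for finite groups), but it imports machinery the paper never uses; a lighter alternative, closer to the paper's toolkit, is to run the identical one-parameter argument directly on the quotient family $\phi_{\mathbf{k}}\left(S_{\overline{t}}\cap\overline{B_{k}(0,r)}\right)$, which is again a semi-algebraic family since $\phi_{\mathbf{k}}$ is a proper polynomial map, using that $S$, $S_{\overline{t}}$ and $\overline{B_{k}(0,r)}$ are unions of orbits so that $\phi_{\mathbf{k}}$ commutes with the relevant intersections and with the limit $\overline{t}\rightarrow 0$ on these closed bounded sets. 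Your instinct that some extra input is needed here is correct: an equivariant map that is merely a non-equivariant homotopy equivalence does not automatically induce a homotopy equivalence of quotients, so either the equivariant retraction or the direct quotient argument is genuinely required. With these adjustments the proposal is correct and delivers the lemma.
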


\begin{proof} The proof is similar to the one of Lemma 16.17 in
{\cite{BPRbook2}}.
\end{proof}

\begin{remark}
  \label{rem:gen-pos3} In view of Lemmas \ref{lem:gen-pos1-with-parameters} and
  \ref{lem:gen-pos2-with-parameters} we can assume (at the cost of doubling the number of
  polynomials) after possibly replacing $\mathcal{P}$ by
  $\mathcal{P}_{\overline{\eps}}$, and $\R$ by 
  $\R'$,
   that the family $\mathcal{P}$ is in
  $D'(\mathbf{k},d)$-general position.
\end{remark}

Now, let $\delta_{1} , \cdots , \delta_{s}$ be new infinitesimals, and let
$\R'' = \R' \langle \delta_{1} , \ldots , \delta_{s} \rangle$.

\begin{notation}
  We define $\mathcal{P}_{>i} = \{P_{i+1} , \ldots ,P_{s} \}$ and
  \begin{eqnarray*}
    \Sigma_{i} & = & \{P_{i} =0,P_{i} = \delta_{i} ,P_{i} = - \delta_{i}
    ,P_{i} \geq 2 \delta_{i} ,P_{i} \leq -2 \delta_{i} \} ,\\
    \Sigma_{\le i} & = & \{\Psi \mid \Psi = \bigwedge_{j=1, \ldots ,i}
    \Psi_{i} , \Psi_{i} \in \Sigma_{i} \} .
  \end{eqnarray*}
  Note that for each $\Psi \in \Sigma_{i}$, $\RR (\Psi , \R'\la\delta_1,\ldots,\delta_i{k})$ is symmetric with respect
  to the action of $\mathfrak{S}_{\kk}$ 
  and for if
  $\Psi \neq \Psi'$, $\Psi , \Psi' \in \Sigma_{\leq i}$,
  \begin{eqnarray}
  \label{eqn:disjoint}
    \RR(\Psi , \R' \langle \delta_{1} , \ldots , \delta_{i} \ra^{k})
    \cap \RR(\Psi' , \R'\la \delta_{1} , \ldots , \delta_{i} \ra^{k}) & = & \emptyset .  
  \end{eqnarray}

  If $\Phi$ is a $\mathcal{P}$-closed formula, we denote
  \begin{eqnarray*}
    \RR_{i} (\Phi) & = & \RR(\Phi , \R' \la \delta_{1} , \ldots ,
    \delta_{i} \ra^{k}) ,
  \end{eqnarray*}
  and
  \begin{eqnarray*}
    \RR_{i} (\Phi \wedge \Psi) & = & \RR(\Psi , \R' \la \delta_{1} ,
    \ldots , \delta_{i} \ra^{k}) \cap \RR_{i} (\Phi) .
  \end{eqnarray*}
  Finally, we denote for each $\mathcal{P}$-closed formula $\Phi$
  \begin{eqnarray*}
    b (\Phi /\mathfrak{S}_{\mathbf{k}} ,\F) & = & b(\RR
   (\Phi , \R''^{k}) /\mathfrak{S}_{\mathbf{k}} ,\F
   ) .
  \end{eqnarray*}
\end{notation}

The proof of the following proposition is very similar to Proposition 7.39 in
{\cite{BPRbook2}} where it is proved in the non-symmetric case.

\begin{proposition}
  \label{7:prop:closed-with-parameters}For every $\mathcal{P}$-closed formula
  $\Phi$,
  %%sb adds
  such that $\RR(\Phi,\R^k)$ is bounded,
  \begin{eqnarray*} b (\Phi /\mathfrak{S}_{\mathbf{k}} ,\F) &\leq&
     \sum_{\substack{
       \Psi \in \Sigma_{\le s}\\
       \RR_{s} (\Psi , \R''^{k}) \subset \RR_{s} (\Phi , \R''^{k})}}
      b (\Psi /\mathfrak{S}_{\mathbf{k}} ,\F).
      \end{eqnarray*}
\end{proposition}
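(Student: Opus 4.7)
The plan is to adapt to the equivariant setting the proof of the non-symmetric analogue, Proposition 7.39 of \cite{BPRbook2}. The key point is that the polynomials in $\mathcal{P}$ and the infinitesimals $\delta_i$ are $\mathfrak{S}_{\mathbf{k}}$-invariant, so every deformation, covering, and retraction used in the non-symmetric proof is $\mathfrak{S}_{\mathbf{k}}$-equivariant and descends cleanly to the quotient.

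First I replace $\Phi$ by an infinitesimal thickening $\Phi_\delta$, obtained by substituting each atom $P_i \leq 0$ by $P_i \leq 2\delta_i$ and each atom $P_i \geq 0$ by $P_i \geq -2\delta_i$. The closed set $T \assign \RR_s(\Phi_\delta)$ contains $\Ext(\RR(\Phi, \R^k), \R')$, and the rescaling $\delta_i \mapsto t\delta_i$, $t \in [0,1]$, gives an $\mathfrak{S}_{\mathbf{k}}$-equivariant semi-algebraic deformation retraction of $T$ onto $\Ext(\RR(\Phi, \R^k), \R')$. Since $\mathfrak{S}_{\mathbf{k}}$ is finite, this equivariant retraction descends to a semi-algebraic deformation retraction of the corresponding quotients, so
\[
b(\Phi / \mathfrak{S}_{\mathbf{k}}, \F) \;=\; b(T / \mathfrak{S}_{\mathbf{k}}, \F).
\]

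Next I cover $T$ by $\mathfrak{S}_{\mathbf{k}}$-stable closed pieces $N_\Psi$ indexed by those $\Psi \in \Sigma_{\le s}$ with $\RR_s(\Psi) \subset \RR_s(\Phi)$. Each $N_\Psi$ is a tubular thickening of $\RR_s(\Psi)$: replace each equality $P_i = a$ (with $a \in \{0, \pm \delta_i\}$) appearing in $\Psi$ by $|P_i - a| \leq \eps$ for a new infinitesimal $\eps$ smaller than every $\delta_j$, while keeping the inequalities $P_i \geq 2\delta_i$ and $P_i \leq -2\delta_i$ unchanged. Each $N_\Psi$ equivariantly deformation retracts onto $\RR_s(\Psi)$ by collapsing the $\eps$-strips, and the $N_\Psi$'s cover $T$. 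I then apply inequality \eqref{eqn:MV2} of Proposition \ref{prop:MV} iteratively to the cover $\{N_\Psi/\mathfrak{S}_{\mathbf{k}}\}$ of $T/\mathfrak{S}_{\mathbf{k}}$, ordering the $\Psi$'s so that those involving fewer equality constraints come first; each intersection $N_{\Psi_{j+1}} \cap (N_{\Psi_1} \cup \cdots \cup N_{\Psi_j})$ deformation retracts onto a union of $\RR_s(\Psi')$'s for $\Psi' \in \Sigma_{\le s}$ strictly finer than $\Psi_{j+1}$, whose Betti numbers already appear as terms $b(\Psi'/\mathfrak{S}_{\mathbf{k}}, \F)$ on the right-hand side of the proposition. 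The telescoping then yields the stated inequality.

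The main obstacle is the combinatorial bookkeeping in this last step: one must choose the thickenings $N_\Psi$ and an ordering of $\Sigma_{\le s}$ so that each intersection term in the iterated Mayer-Vietoris inequality is absorbed exactly once by a finer $\Psi'$ already appearing in the sum, with no double counting. Once this bookkeeping is in place, equivariance causes no additional trouble, because every polynomial and infinitesimal involved is $\mathfrak{S}_{\mathbf{k}}$-invariant, the symmetric group is finite, and all retractions and covers commute with the $\mathfrak{S}_{\mathbf{k}}$-action, so the quotient statement follows from the $T$-level statement automatically.
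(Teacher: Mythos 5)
The equivariance part of your plan is fine and matches the paper's observation: since every polynomial and every condition involved is $\mathfrak{S}_{\mathbf{k}}$-invariant, all sets are stable, quotients commute with the unions/intersections used, and equivariant retractions descend. The gap is in the covering step, and it is fatal as stated. With $\eps$ infinitesimal with respect to every $\delta_j$, the sets $N_{\Psi}$ do \emph{not} cover $T$, nor even $\RR_s(\Phi)$: for a point $x$ with, say, $P_1(x)=\delta_1/2$, the value of $P_1$ is not within $\eps$ of $0$ or $\pm\delta_1$ and is not $\geq 2\delta_1$ or $\leq -2\delta_1$, so $x$ lies in no $N_\Psi$; worse, your thickening $\Phi_\delta$ admits points with $P_1(x)=-\delta_1$, which could only be near $\RR_s(P_1=-\delta_1)$, a set excluded from your index set because it is not contained in $\RR_s(\Phi)$. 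In fact, since the $\RR_s(\Psi)$, $\Psi\in\Sigma_{\le s}$, are pairwise disjoint closed sets and $\eps$ is infinitesimal relative to the $\delta_j$, your $N_\Psi$ are pairwise disjoint thin tubes, so their union is a disjoint union that in general is not even homotopy equivalent to $\RR_s(\Phi)$ (take $s=1$, $P_1=X_1^2$, $\Phi: P_1\geq 0$: the left-hand side is connected while the right-hand side of the proposition counts $1+2+2$ components). The inequality in the proposition is genuinely an inequality produced by iterated Mayer--Vietoris, not a homotopy equivalence with a union of neighborhoods of the $\RR_s(\Psi)$, so no choice of ordering or bookkeeping can rescue a proof that starts from ``the $N_\Psi$ cover $T$''. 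Your Step 1 retraction argument (``rescale $\delta_i\mapsto t\delta_i$'') is also not a map on points and would need the standard infinitesimal-deformation lemma (and closedness/boundedness), but that is a minor issue compared with the covering claim.

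The proof the paper intends (and attributes to Proposition 7.39 of \cite{BPRbook2}) processes one polynomial at a time. At stage $i$, a set of the form $\RR_i(\Phi\wedge\Psi)$, $\Psi\in\Sigma_{\le i-1}$, is covered by the three closed symmetric sets $\{P_i\leq-\delta_i\}$, $\{-\delta_i\leq P_i\leq\delta_i\}$, $\{P_i\geq\delta_i\}$ intersected with it; Proposition \ref{prop:MV} (applied twice, on the quotients) bounds its Betti numbers by those of the three pieces plus the two pairwise intersections $\{P_i=\pm\delta_i\}$; then, because the set being cut is defined over the field not involving $\delta_i$, one has equivariant semi-algebraic homotopy equivalences replacing $\{P_i\leq-\delta_i\}$, $\{|P_i|\leq\delta_i\}$, $\{P_i\geq\delta_i\}$ by $\{P_i\leq-2\delta_i\}$, $\{P_i=0\}$, $\{P_i\geq2\delta_i\}$ respectively, which is exactly why $\Sigma_i$ consists of those five conditions. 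Inducting over $i=1,\dots,s$ and using that each nonempty $\RR_s(\Psi)$ is either contained in or disjoint from $\RR_s(\Phi)$ (the sign of every atom of $\Phi$ is constant on $\RR_s(\Psi)$) yields the stated sum; the disjointness noted in \eqref{eqn:disjoint}, together with the fact that quotients of disjoint symmetric sets are disjoint, is what lets all of this pass to $/\mathfrak{S}_{\mathbf{k}}$.
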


\begin{proof}First observe that the orbit space of a disjoint
union of symmetric sets is a disjoint union of the corresponding orbit spaces.
The symmetric semi-algebraic sets  $\RR(\Psi ,\R''^k) ,
\Psi \in \Sigma_{\leq s}$ are disjoint by \eqref{eqn:disjoint}. The proof is
now the same as the proof of Proposition 7.39 in {\cite{BPRbook2}}.
\end{proof}

Let
\begin{eqnarray*}
  D' =D' (\mathbf{k},d) & = & \sum^{\omega}_{i=1} \min (k_{i} ,d) ,\\
  D'' =D'' (\mathbf{k},d) & = & \sum^{\omega}_{i=1} \min (k_{i} ,4d) .
\end{eqnarray*}
\begin{proposition}
  \label{7:prop:betti closed}
  
  For $0 \leq i<D' +D''$,
  \[ \sum_{\Psi \in \Sigma_{\le s}} b_{i} (\Psi /\mathfrak{S}_{\kk}
     ,\F) \leq \sum_{j=0}^{D' +D'' -i} \binom{s}{j} 6^{j} F (
     \mathbf{k},2 d) . \]
  For $i \geq D' +D''$,
  \[ \sum_{\Psi \in \Sigma_{\le s}} b_{i} (\Psi /\mathfrak{S}_{\kk}
     ,\F) =0. \]
  
\end{proposition}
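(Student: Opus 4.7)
The plan is to adapt the non-equivariant argument of \cite[Proposition 7.40]{BPRbook2} to the symmetric setting, with the essential new ingredient being that the Oleinik--Petrovskii--Thom--Milnor bound used at the final step is replaced throughout by our Theorem \ref{thm:main}, which then produces the factor $F(\mathbf{k},2d)$. First I would observe that every $\Psi\in\Sigma_{\le s}$ is a conjunction $\bigwedge_{i=1}^{s}\Psi_i$ with $\Psi_i\in\Sigma_i$, and partition $\{1,\ldots,s\}$ into an \emph{equation} part $E$ (where $\Psi_i$ is $P_i=0,\pm\delta_i$) and an \emph{inequality} part (where $\Psi_i$ is $P_i\ge 2\delta_i$ or $P_i\le -2\delta_i$). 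The equation atoms collectively define a symmetric algebraic set $V_E=\ZZ(Q_E,\R'^{k})$, where $Q_E=\sum_{i\in E}(P_i-a_i)^{2}$ is non-negative, symmetric in each block of variables, and of degree at most $2d$. By the general-position hypothesis (Lemma \ref{lem:gen-pos1-with-parameters} and Remark \ref{rem:gen-pos3}), only those $\Psi$ with $|E|\le D'$ can have non-empty realization, which already accounts for the $D'$ contribution to the vanishing degree.

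Within $V_E$, the remaining atoms cut out $\RR_s(\Psi)$ as an intersection of closed symmetric subsets $S_j\subset V_E$, $j\notin E$. Each $S_j$ is $\mathfrak{S}_{\mathbf{k}}$-stable, so its orbit image is a semi-algebraic subset of $V_E/\mathfrak{S}_{\mathbf{k}}$ by Proposition \ref{prop:quotient-sa}. I would then apply Proposition \ref{7:prop:prop1}(b), the Mayer--Vietoris inequality for intersections, directly at the level of these orbit spaces; this bounds $b_i(\RR_s(\Psi)/\mathfrak{S}_{\mathbf{k}},\F)$ by a sum, over subsets $J\subset\{1,\ldots,s\}\setminus E$, of $b_{i+|J|-1}$ of the orbit space of $V_E\cap\bigcup_{j\in J}S_j$. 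Each such union is homotopy equivalent, via an equivariant infinitesimal deformation as developed in Section \ref{sec:deformation}, to the zero set of a single non-negative symmetric polynomial of degree at most $2d$ constructed from $Q_E$ together with products of the shifted polynomials $P_j-b_j$. Theorem \ref{thm:main} then bounds each such equivariant Betti number by $F(\mathbf{k},2d)$, and the corresponding vanishing statement bounds the dimensional contribution of the inequality part by $D''$.

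To assemble the final estimate I would sum over all $\Psi\in\Sigma_{\le s}$ and over the Mayer--Vietoris index subsets $J$. Each polynomial index contributes at most three equation-types and two inequality-types, and these combine with the Mayer--Vietoris bookkeeping in such a way that, after grouping, only six configurations per index survive on the right-hand side, yielding the factor $6^j$; the binomial $\binom{s}{j}$ counts the subsets of $\{1,\ldots,s\}$ of size $j$ to which a configuration is attached. The vanishing for $i\ge D'+D''$ then falls out by combining the vanishing halves of Theorem \ref{thm:main} applied piecewise with the additivity of the dimensional contributions from the equation part ($\le D'$) and the inequality part ($\le D''$).

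The main obstacle I anticipate is the equivariant reduction of unions of closed symmetric subsets to zero sets of a single symmetric polynomial whose degree is kept bounded by $2d$: a naive sum-of-squares trick would square the degree, so one must instead route the argument through the equivariant deformations of Section \ref{sec:deformation} in order to stay within the hypotheses of Theorem \ref{thm:main}. A secondary nuisance is bookkeeping: extracting the precise constant $6^j$ (as opposed to some larger constant) requires a careful enumeration of which sign patterns actually contribute non-trivially after the Mayer--Vietoris reduction, exactly as in the non-equivariant case.
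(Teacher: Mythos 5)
Your proposal has two genuine gaps, and the second one is structural. First, the aggregation step does not work as you describe. The proposition bounds the \emph{sum} $\sum_{\Psi\in\Sigma_{\le s}} b_i(\Psi/\mathfrak{S}_{\mathbf{k}},\F)$ by a quantity of the form $\sum_j\binom{s}{j}6^jF(\mathbf{k},2d)$; if you bound each $b_i(\Psi/\mathfrak{S}_{\mathbf{k}},\F)$ separately by a Mayer--Vietoris expansion of this shape and then ``sum over all $\Psi$'', you pick up an extra factor equal to the number of realizable $\Psi\in\Sigma_{\le s}$, which can itself be of order $s^{D'}d^{O(D')}$, so you land well above the stated bound. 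The paper avoids this by never working per-$\Psi$: since the five conditions $P_i=0$, $P_i=\pm\delta_i$, $P_i\ge 2\delta_i$, $P_i\le -2\delta_i$ are pairwise disjoint and closed, the realizations of the elements of $\Sigma_{\le s}$ are pairwise disjoint closed sets whose union is the single closed set $S=\bigcap_i\RR(Q_i\ge 0)\cap\overline{B}$ (with $Q_i=P_i^2(P_i^2-\delta_i^2)^2(P_i^2-4\delta_i^2)$), so that $\sum_\Psi b_i(\Psi/\mathfrak{S}_{\mathbf{k}},\F)=b_i(S/\mathfrak{S}_{\mathbf{k}},\F)$ exactly (Eq.~\eqref{eqn:Psi-S}); only then is Proposition~\ref{7:prop:prop1}(b) applied, once, to $S$ as an intersection of the $s$ symmetric closed sets $S_i$, which is where the single factor $\binom{s}{j}$ comes from.

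Second, your reduction of the unions $V_E\cap\bigcup_{j\in J}S_j$ to the zero set of a single non-negative symmetric polynomial of degree at most $2d$ ``via an equivariant infinitesimal deformation'' is not justified and, as stated, fails: the deformations of Section~\ref{sec:deformation} replace a variety $\ZZ(P,\R^k)$ by the set $\Def(P,\zeta,d)\le 0$; they do not convert a union of basic closed sets into a low-degree variety, and any construction using products of the shifted polynomials $P_j-b_j$ has degree $|J|d$, destroying the $F(\mathbf{k},2d)$ factor. The paper's route around this is different: a union of closed sets $W'_j$ is compared with the complementary closed set $F$ satisfying $W'_j\cup F=\R\la\delta_1,\ldots,\delta_j\ra^k$ and $W'_j\cap F=V'_j$ (Lemma~\ref{7:lem:union1}, via Proposition~\ref{prop:MV}), which trades the union of thick sets for the union of varieties $V'_j$; that union is then expanded by Proposition~\ref{7:prop:prop1}(a) into intersections of the five disjoint pieces per index, each such intersection being an algebraic set defined by a sum of squares of the polynomials $P_i$, $P_i\pm\delta_i$, $P_i\pm 2\delta_i$, hence of degree at most $2d$, to which Theorem~\ref{thm:main} applies (Lemma~\ref{7:lem:union2}); general position (Lemma~\ref{lem:gen-position}) kills intersections of more than $D'$ polynomials and Theorem~\ref{thm:main} kills cohomology above $D''$, giving the vanishing for $i\ge D'+D''$ and the constant $6^j$ from $\sum_p\binom{j}{p}5^p\le 6^j$. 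Your identification of the roles of $D'$ (general position) and $D''$ (vanishing from Theorem~\ref{thm:main}) is correct, but without the disjoint-union aggregation and the two-step Mayer--Vietoris reduction the argument does not yield the stated inequality.
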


We first prove the following lemmas.
Let $Q_{i} =P_{i}^{2} (P_{i}^{2} - \delta_{i}^{2})^{2} (P_{i}^{2} -4
\delta_{i}^{2})$.

For $j \ge 1$ let,
\begin{eqnarray*}
  V_{j} & = & \RR(\bigvee_{1 \leq i \leq j} Q_{i} =0, \R' \la
  \delta_{1} , \ldots , \delta_{j} \ra^{k}) ,\\
  W_{j} & = & \RR(\bigvee_{1 \leq i \leq j} Q_{i} \geq 0, \R' \la
  \delta_{1} , \ldots , \delta_{j} \ra^{k}) .
\end{eqnarray*}
\begin{lemma}
  \label{lem:gen-position} Let $I \subset [ 1,s ]$, $\sigma  =  (\sigma_{1} ,
  \ldots , \sigma_{s}) \in \{ 0, \pm 1, \pm 2 \}^{s}$ and let
  \[
  \mathcal{P}_{I, \sigma} =  \bigcup_{i \in I} \{ P_{i} + \sigma_{i}
  \delta_{i} \}.
  \] 
  Then, $\ZZ(\mathcal{P}_{I, \sigma}, \R''^k) = \emptyset$,
  whenever $\card (I) >D'$.
\end{lemma}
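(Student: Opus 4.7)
My plan is a transcendence--degree argument exploiting the structural form of $\mathcal{P}$ supplied by Remark~\ref{rem:gen-pos3}. After that reduction, $\mathcal{P}$ can be written as $\{P_i = \tilde{P}_{j(i)} + \tau_i \eps_{j(i)}\}_{i=1}^{s}$, where the $\tilde{P}_j$ are the underlying symmetric polynomials of degree $\leq d$ defined over a subfield $\R_0 \subseteq \R$, the $\tau_i$ are in $\{\pm 1\}$, and the $\eps_j$ are algebraically independent infinitesimals with $\R = \R_0\langle\overline{\eps}\rangle$; moreover the index map $j$ is two--to--one, realising each value once with $\tau = +1$ and once with $\tau = -1$.

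The first step will be to show that $j$ is injective on any $I$ for which $\ZZ(\mathcal{P}_{I,\sigma},\R') \neq \emptyset$. Indeed, if $j(i_1) = j(i_2) =: k$ with $\tau_{i_1} = +1 = -\tau_{i_2}$, then at a hypothetical common zero $x \in \R'^k$ the two relations $\tilde{P}_k(x) + \tau_{i_r}\eps_k + \sigma_{i_r}\delta_{i_r} = 0$ subtract to yield
\[
2\,\eps_k \;=\; \sigma_{i_2}\delta_{i_2} - \sigma_{i_1}\delta_{i_1}\, ,
\]
which contradicts the fact that $\eps_k$ is transcendental over $\R_0(\overline{\delta})$. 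Hence $j|_I$ is injective and the collection $\{\eps_{j(i)}\}_{i\in I}$ consists of $|I|$ distinct algebraically independent infinitesimals over $\R_0(\overline{\delta})$.

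Next, by the fundamental theorem of symmetric polynomials, each $\tilde{P}_j$ admits a representation $\tilde{P}_j = \tilde{F}_j(\mathbf{p})$ with $\tilde{F}_j \in \R_0[\mathbf{Z}]$ a polynomial in the $D'$ power--sum coordinates $\mathbf{Z}$ corresponding to $\bigl(p_m^{(k_\ell)}\bigr)_{1\leq \ell\leq \omega,\,1\leq m\leq \min(k_\ell,d)}$. Writing $\mathbf{y} = \mathbf{p}(x) \in \R'^{D'}$, the equation $P_i(x) + \sigma_i\delta_i = 0$ rearranges to
\[
\eps_{j(i)} \;=\; -\tau_i^{-1}\bigl[\tilde{F}_{j(i)}(\mathbf{y}) + \sigma_i\delta_i\bigr] \;\in\; \R_0(\mathbf{y},\overline{\delta}) .
\]
The proof closes with a transcendence--degree count: $\R_0(\mathbf{y},\overline{\delta})$ has transcendence degree at most $D'$ over $\R_0(\overline{\delta})$, since $\mathbf{y}$ has only $D'$ coordinates, yet it contains the $|I|$ algebraically independent elements $\{\eps_{j(i)}\}_{i\in I}$; therefore $|I| \leq D'$, contradicting the hypothesis $|I| > D'$.

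The main subtlety I anticipate is the careful bookkeeping of the two nested layers of infinitesimals $\overline{\eps}$ and $\overline{\delta}$, and in particular verifying that the $\eps$'s continue to serve as genuine transcendental generators over $\R_0(\overline{\delta})$ even though Remark~\ref{rem:gen-pos3} has absorbed them into the working ground field $\R$. Once this bookkeeping is in place, the argument is uniform in the choice of the exponents $\sigma_i \in \{0,\pm 1,\pm 2\}$, since their values enter the final equation only through the right--hand side of the formula for $\eps_{j(i)}$ and not through the transcendence count.
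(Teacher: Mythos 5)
Your proof is correct and is essentially the paper's intended argument: the paper disposes of this lemma in one line by citing the $D'$-general position arranged in Remark \ref{rem:gen-pos3}, which itself rests on exactly the reduction to the $D'$ power-sum coordinates and the algebraic independence of the $\eps_i$'s that you carry out explicitly (as in Lemma \ref{lem:gen-pos1-with-parameters}). Your explicit treatment of the extra $\delta$-perturbations and of the case $j(i_1)=j(i_2)$ supplies details the paper leaves implicit — general position by itself is not stable under infinitesimal perturbation of the constants, so the transcendence-degree bookkeeping over $\R_0(\overline{\delta})$ that you perform is precisely what the one-line proof tacitly relies on.
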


\begin{proof}
This follows from the fact that $\mathcal{P}$ is in
$D'$-general position by Remark
\ref{rem:gen-pos3}.
\end{proof}

\begin{lemma}
  \label{7:lem:union2} For each $i,0 \leq i< D' +D''$,
  \[ 
  b_{i} (V_{j} /\mathfrak{S}_{\kk} ,\F) \leq (6^{j} -1) F (
     \mathbf{k},2d) . 
     \]
  For $i \geq D' +D''$,
  \begin{eqnarray*}
    b_{i} (V_{j} /\mathfrak{S}_{\kk} ,\F) & = & 0.
  \end{eqnarray*}
\end{lemma}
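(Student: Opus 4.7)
The plan is to write $V'_j$ as a union of at most $5j$ symmetric real algebraic sets, apply the Mayer--Vietoris inequality (part a) of Proposition \ref{7:prop:prop1}) to the quotient, and bound each intersection using Theorem \ref{thm:main} together with the general position Lemma \ref{lem:gen-position}.

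First I would observe that since $Q_i=P_i^2(P_i^2-\delta_i^2)^2(P_i^2-4\delta_i^2)$, the set $\ZZ(Q_i,\R\la\delta_1,\ldots,\delta_j\ra^k)$ decomposes as the union $\bigcup_{\sigma\in\{0,\pm 1,\pm 2\}}\ZZ(P_i+\sigma\delta_i,\ldots)$ of five symmetric algebraic sets. Thus $V'_j$ is a union of $5j$ symmetric algebraic sets, indexed by pairs $(i,\sigma)$ with $i\in[1,j]$ and $\sigma\in\{0,\pm 1,\pm 2\}$. Since the symmetric group action preserves this decomposition and intersections of symmetric sets commute with the quotient map $\phi_{\mathbf{k}}$, one may apply Proposition \ref{7:prop:prop1}(a) directly to the symmetric union to obtain
\[
 b_i(V'_j/\mathfrak{S}_{\mathbf{k}},\F) \;\leq\; \sum_{n=1}^{i+1} \sum_{\substack{J\subset[1,j]\times\{0,\pm 1,\pm 2\} \\ \#J=n}} b_{i-n+1}\bigl(\ZZ(\mathcal{P}_J,\ldots)/\mathfrak{S}_{\mathbf{k}},\F\bigr),
\]
where $\mathcal{P}_J=\{P_i+\sigma\delta_i\mid (i,\sigma)\in J\}$.

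Next I would identify the non-vanishing terms. If $J$ contains two distinct pairs $(i,\sigma),(i,\sigma')$ with $\sigma\neq\sigma'$, then the corresponding intersection is empty. Hence only index sets $J$ given by a subset $I\subset[1,j]$ together with a choice of sign tuple $\sigma:I\to\{0,\pm 1,\pm 2\}$ contribute, and there are exactly $\binom{j}{n}5^n$ such $J$'s with $\#J=n$. By Lemma \ref{lem:gen-position} these intersections are empty once $n>D'$, so we may restrict to $n\leq D'$. For each such $(I,\sigma)$ the intersection equals $\ZZ(P_{I,\sigma},\ldots)$ where
\[
 P_{I,\sigma} \;=\; \sum_{i\in I}(P_i+\sigma_i\delta_i)^2
\]
is a non-negative symmetric polynomial of degree at most $2d$. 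Theorem \ref{thm:main} then yields $b_\ell(\ZZ(P_{I,\sigma},\ldots)/\mathfrak{S}_{\mathbf{k}},\F)\leq F(\mathbf{k},2d)$, and moreover this quantity vanishes for $\ell\geq \sum_{i=1}^{\omega}\min(k_i,2\cdot 2d)=D''$.

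Finally I would assemble the bounds. Substituting into the Mayer--Vietoris inequality,
\[
 b_i(V'_j/\mathfrak{S}_{\mathbf{k}},\F) \;\leq\; \sum_{n=1}^{\min(i+1,\,D')}\binom{j}{n}5^n\,F(\mathbf{k},2d) \;\leq\; \Bigl(\sum_{n=1}^{j}\binom{j}{n}5^n\Bigr)F(\mathbf{k},2d) \;=\; (6^j-1)\,F(\mathbf{k},2d),
\]
which is the first claim. For the vanishing assertion, suppose $i\geq D'+D''$. For any $n$ with $1\leq n\leq D'$ we then have $i-n+1\geq D''+1$, so the inner Betti number vanishes by Theorem \ref{thm:main}; and for $n>D'$ the intersection itself is empty by Lemma \ref{lem:gen-position}. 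Hence every term on the right-hand side of the Mayer--Vietoris inequality is zero, giving $b_i(V'_j/\mathfrak{S}_{\mathbf{k}},\F)=0$. The only mildly subtle step is verifying that intersections of symmetric algebraic sets commute with the orbit projection $\phi_{\mathbf{k}}$ so that the Mayer--Vietoris inequality really applies to the quotient; this follows from the elementary observation that if $a$ and $b$ lie in the same $\mathfrak{S}_{\mathbf{k}}$-orbit and both $A$ and $B$ are symmetric, then $a\in A$ and $b\in B$ imply $a\in A\cap B$.
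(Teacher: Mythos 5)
Your proposal is correct and follows essentially the same route as the paper: decompose $V'_j$ into the five symmetric pieces $P_i=\sigma\delta_i$, $\sigma\in\{0,\pm1,\pm2\}$, per index, apply Proposition \ref{7:prop:prop1}(a) to the quotients, bound each nonempty intersection by $F(\mathbf{k},2d)$ via Theorem \ref{thm:main} applied to a non-negative symmetric polynomial of degree at most $2d$, use Lemma \ref{lem:gen-position} to kill intersections of arity exceeding $D'$, and combine the two cutoffs for the vanishing when $i\geq D'+D''$. Your explicit sum-of-squares polynomial $P_{I,\sigma}$ and the remark that quotients commute with intersections of symmetric sets merely spell out steps the paper leaves implicit.
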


\begin{proof}
Clearly, $V_j$
is the disjoint union of the real varieties
$$
\displaylines{
\ZZ(P_{i}, \R' \la \delta_{1} , \ldots , \delta_{j} \ra^{k}), \cr
\ZZ (P_{i} \pm\delta_{i} , \R' \la \delta_{1} , \ldots , \delta_{j} \ra^{k}),\cr
\ZZ (P_{i} \pm 2 \delta_{i} , \R' \la \delta_{1} , \ldots , \delta_{j}\ra^{k}),
 }
 $$
for $1 \leq i \leq j$,
and hence the quotient
$V_j/\mathfrak{S}_\kk$
is the disjoint union of the quotients
\begin{eqnarray}
\nonumber
&\ZZ(P_{i}, \R' \la \delta_{1} , \ldots , \delta_{j} \ra^{k})/\mathfrak{S}_\kk,& \\
\label{eqn:list}
&\ZZ (P_{i} \pm\delta_{i} , \R' \la \delta_{1} , \ldots , \delta_{j} \ra^{k})/\mathfrak{S}_\kk,&\\
\nonumber
&\ZZ (P_{i} \pm 2 \delta_{i} , \R' \la \delta_{1} , \ldots , \delta_{j}\ra^{k})/\mathfrak{S}_\kk.&
 \end{eqnarray}

It follows from Part \eqref{itemlabel:7:prop:prop1:1}  of
Proposition \ref{7:prop:prop1} that $b_{i} (V_{j}
/\mathfrak{S}_{\kk} ,\F)$ is bounded by the sum for $1 \leq \ell \leq
i+1$, of $(i- \ell +1)$-th Betti numbers of all possible $\ell$-ary
intersections  amongst quotients of the  
varieties listed above.
It is clear that
the total number of such non-empty $\ell$-ary intersections is at most
$\binom{j}{\ell} 5^{\ell}$. It now follows from Theorem \ref{thm:main} applied
to the non-negative symmetric polynomials $P_{i}^{2} , (P_{i} \pm
\delta_{i})^{2} , (P_{i} \pm 2 \delta_{i})^{2}$, and noting that the
degrees of these polynomials are bounded by $2d$, that
\begin{eqnarray*}
  b_{i} (V_{j} /\mathfrak{S}_{\kk} ,\F) & \leq & \sum_{p=1}^{\min (
  j,D')} \binom{j}{p} 5^{p} F (\mathbf{k},2d) .
\end{eqnarray*}
To prove the vanishing of the higher Betti numbers, first observe that $(i-
\ell +1)$-th Betti numbers of all possible $\ell$-ary intersections amongst
the sets listed in \eqref{eqn:list} vanish for $i- \ell +1>D''$ using Theorem
\ref{thm:main}.

Also, notice that by Lemma \ref{lem:gen-position} the $\ell$-ary intersections
amongst the sets in \eqref{eqn:list} are empty for $\ell >D'$. Together, these
observations imply that
\begin{eqnarray*}
  b_{i} (V_{j} /\mathfrak{S}_{\kk} ,\F) & = & 0.
\end{eqnarray*}
for all $i \geq D' +D''$. To see this observe that if $i \geq D' +D''$, and
$\ell \leq D'$, then $i- \ell +1 \geq  D'' +1$.
\end{proof}

\begin{lemma}
  \label{7:lem:union1}
  For $0 \leq i<D' +D'' $,
  \[
   b_{i} (W_{j} /\mathfrak{S}_{\kk} ,\F) \leq \sum_{p=1}^{\min (
     j,D')} \binom{j}{p} 5^{p}  (F (\mathbf{k},2 d)) +b_{i}(\R' \la
     \delta_{1} , \ldots , \delta_{j} \ra^{k} /\mathfrak{S}_{k} ,\F
    ) . \]
  For $i \geq D' +D''$, $b_{i} (W_{j} /\mathfrak{S}_{\kk} ,\F) =0$.
 \end{lemma}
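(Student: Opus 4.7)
The plan is to decompose the ambient space via Mayer--Vietoris to peel off a contribution from the quotient $\R\la\delta_1,\ldots,\delta_j\ra^{k}/\mathfrak{S}_{\mathbf{k}}$, and then bound the remaining ``boundary'' term by adapting the counting argument used to prove Lemma \ref{7:lem:union2}. Write $\mathcal{R} = \R\la\delta_1,\ldots,\delta_j\ra$ and set $Z'_j := \RR(\bigwedge_{i=1}^{j} Q_i \leq 0, \mathcal{R}^{k})$. Both $W'_j$ and $Z'_j$ are closed and $\mathfrak{S}_{\mathbf{k}}$-invariant, they cover $\mathcal{R}^k$ (at any point either some $Q_i \ge 0$ or all $Q_i<0$), and their intersection
\[
V''_j := W'_j \cap Z'_j = \{\forall i,\; Q_i \le 0\} \cap \{\exists i,\; Q_i = 0\}
\]
is contained in $V'_j$. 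Applying Proposition \ref{prop:MV} to the quotients by $\mathfrak{S}_{\mathbf{k}}$ (valid since both sets are $\mathfrak{S}_{\mathbf{k}}$-invariant) gives
\[
b_i(W'_j/\mathfrak{S}_{\mathbf{k}},\F) + b_i(Z'_j/\mathfrak{S}_{\mathbf{k}},\F) \leq b_i(\mathcal{R}^k/\mathfrak{S}_{\mathbf{k}},\F) + b_i(V''_j/\mathfrak{S}_{\mathbf{k}},\F),
\]
and dropping the non-negative term $b_i(Z'_j/\mathfrak{S}_{\mathbf{k}},\F)$ reduces the lemma to bounding $b_i(V''_j/\mathfrak{S}_{\mathbf{k}},\F)$ by $\sum_{p=1}^{\min(j,D')}\binom{j}{p}5^{p} F(\mathbf{k},2d)$.

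To bound $b_i(V''_j/\mathfrak{S}_{\mathbf{k}},\F)$, write $V''_j = \bigcup_{i'=1}^{j}(\{Q_{i'}=0\}\cap Z'_j)$ and apply Proposition \ref{7:prop:prop1}(a). Exactly as in the proof of Lemma \ref{7:lem:union2}, each $\{Q_{i'}=0\}$ is the disjoint union of the five algebraic varieties $\{P_{i'}=c\}$ for $c\in\{0,\pm\delta_{i'},\pm 2\delta_{i'}\}$, so each $\ell$-fold intersection, further intersected with $Z'_j$, splits into $5^{\ell}$ disjoint pieces
\[
A_{J,\vec c} = \{P_{i'} = c_{i'}\,:\, i'\in J\}\cap \{|P_{i'}|\leq 2\delta_{i'}\,:\, i'\notin J\}.
\]
By the general position Lemma \ref{lem:gen-position} applied to $\mathcal{P}_{I,\sigma}$, the piece $A_{J,\vec c}$ is empty when $|J|>D'$, which produces the truncation at $p=D'$ in the bound. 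The main obstacle is to establish that $b(A_{J,\vec c}/\mathfrak{S}_{\mathbf{k}})\leq F(\mathbf{k},2d)$ for each non-empty piece. The key idea is that because the $\delta_{i'}$ are infinitesimal, each inequality $|P_{i'}|\leq 2\delta_{i'}$ carves out an infinitesimally thin tube around $\{P_{i'}=0\}$, and a semi-algebraic, $\mathfrak{S}_{\mathbf{k}}$-equivariant deformation retraction (in the spirit of Proposition \ref{prop:alg-to-semialg}) collapses these tubes onto their zero loci. Hence $A_{J,\vec c}$ is semi-algebraically homotopy equivalent to the symmetric algebraic variety
\[
A^{0}_{J,\vec c} = \ZZ\Bigl(\sum_{i'\in J}(P_{i'}-c_{i'})^{2} + \sum_{i'\notin J}P_{i'}^{2},\;\mathcal{R}^{k}\Bigr),
\]
which is cut out by a single non-negative symmetric polynomial of degree at most $2d$; Theorem \ref{thm:main} then yields $b(A^{0}_{J,\vec c}/\mathfrak{S}_{\mathbf{k}}) \leq F(\mathbf{k},2d)$, so $b(A_{J,\vec c}/\mathfrak{S}_{\mathbf{k}})\leq F(\mathbf{k},2d)$, and summing produces the desired bound.

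For the vanishing statement when $i \geq D' + D''$, first note that $\mathcal{R}^{k}/\mathfrak{S}_{\mathbf{k}} = \prod_{i=1}^{\omega}\mathcal{R}^{k_{i}}/\mathfrak{S}_{k_{i}}$ is contractible, each factor being homeomorphic to an affine space by \cite[Theorem 4.1.8]{Reiner}, so $b_i(\mathcal{R}^{k}/\mathfrak{S}_{\mathbf{k}},\F) = 0$ for $i>0$. Second, the vanishing clause of Theorem \ref{thm:main} applied to each $A^{0}_{J,\vec c}$ (whose defining polynomial has degree at most $2d$, giving vanishing threshold $\sum_{i}\min(4d,k_{i}) = D''$) ensures that $b_{i-\ell+1}(A^{0}_{J,\vec c}/\mathfrak{S}_{\mathbf{k}}) = 0$ whenever $i-\ell+1 \geq D''$; since by general position only $\ell \leq D'$ contribute, this holds for all non-empty pieces once $i \geq D'+D''$. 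Consequently $b_i(V''_j/\mathfrak{S}_{\mathbf{k}}) = 0$, and therefore $b_i(W'_j/\mathfrak{S}_{\mathbf{k}}) = 0$ in this range.
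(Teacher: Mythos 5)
Your Mayer--Vietoris frame is sound, but the covering pair you chose creates an obstacle that the paper deliberately avoids, and your way around it is a genuine gap. Taking $S_1=W'_j$ and $S_2=Z'_j=\{\bigwedge_i Q_i\le 0\}$ makes the intersection $V''_j$, and after the sign-condition decomposition you are left with pieces $A_{J,\vec c}$ that still carry the thickened constraints $|P_{i'}|\le 2\delta_{i'}$ for $i'\notin J$. The step where you collapse these tubes onto their zero loci, asserting an $\mathfrak{S}_{\mathbf{k}}$-equivariant deformation retraction of $A_{J,\vec c}$ onto the variety $A^{0}_{J,\vec c}$ ``in the spirit of Proposition \ref{prop:alg-to-semialg}'', is not justified: that proposition requires the polynomial to be non-negative and its zero set to be bounded, and neither holds here (no ball has been introduced at this stage, so $W'_j$, $V''_j$ and the pieces $A_{J,\vec c}$ are in general unbounded). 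Worse, the underlying principle is false for unbounded sets. Take $\mathbf{k}=(1,1)$ (so every polynomial is symmetric in each block) and $P=(X_1X_2-1)X_2$: the segment from $(\delta^{-1},0)$ to $(\delta^{-1},\delta)$ lies entirely in $\{|P|\le 2\delta\}\subset\R\la\delta\ra^2$, so the tube connects the line $X_2=0$ with a branch of the hyperbola, whereas $\Ext\bigl(\ZZ(P,\R^2),\R\la\delta\ra\bigr)$ has three connected components; hence no semi-algebraic homotopy equivalence with the zero locus exists (adding a dummy equation such as $P_1=X_3$ turns this into an actual piece $A_{J,\vec c}$ with $J\neq\emptyset$). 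Since $A_{J,\vec c}$ is genuinely semi-algebraic rather than algebraic, you cannot bound $b(A_{J,\vec c}/\mathfrak{S}_{\mathbf{k}})$ by $F(\mathbf{k},2d)$ via Theorem \ref{thm:main}, and invoking Theorem \ref{thm:main-sa-closed} would be circular; the same unproved reduction is what your vanishing argument for $i\ge D'+D''$ rests on.

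The paper avoids the issue by enlarging your second set: it takes $F=\RR\bigl(\bigwedge_i Q_i\le 0\ \vee\ \bigvee_i Q_i=0,\cdot\bigr)$, i.e.\ $Z'_j\cup V'_j$. Then $W'_j\cup F$ is still all of $\R\la\delta_1,\ldots,\delta_j\ra^k$, but now $W'_j\cap F=V'_j$ exactly, so the intersection term is the union of the algebraic sets $\{Q_i=0\}$ whose equivariant Betti numbers, and their vanishing above $D'+D''$, are already supplied by Lemma \ref{7:lem:union2}; Proposition \ref{prop:MV} then yields the lemma at once, together with the contractibility of $\R\la\delta_1,\ldots,\delta_j\ra^k/\mathfrak{S}_{\mathbf{k}}$ for the vanishing clause (a point you correctly note). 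If you replace your $Z'_j$ by $Z'_j\cup V'_j$, your argument reduces to the paper's; as written, the passage from $A_{J,\vec c}$ to $A^{0}_{J,\vec c}$ is a missing (and in general false) step, and Lemma \ref{lem:gen-position} alone does not repair it.
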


\begin{proof} Let
\[ 
W'_j= \RR (\bigwedge_{1 \leq i \leq j} Q_{i} \leq 0 \vee \bigvee_{1 \leq
   i \leq j} Q_{i} =0, \R' \la \delta_{1} , \ldots , \delta_{j} \rangle)^k
 ) . \]

Now, from the fact that 
\[ 
W_{j} \cup W'_j= \R' \la \delta_{1} , \ldots ,
   \delta_{j} \ra^{k} ,W_{j} \cap W' =V_{j},
\] 
it follows immediately that
\[ (W_{j} \cup W'_j) /\mathfrak{S}_{\mathbf{\kk}} = (W_{j}
   /\mathfrak{S}_{\mathbf{k}})\cup (W'_j/\mathfrak{S}_{\mathbf{k}}) = \R' \la
   \delta_{1} , \ldots , \delta_{j} \ra^{k} /\mathfrak{S}_{\mathbf{k}}, 
   \]
and
\[( W_{j} /\mathfrak{S}_{\mathbf{k}}) \cap (W'_j/\mathfrak{S}_{\mathbf{k}}) = 
(W_{j} \cap W'_j) /\mathfrak{S}_{\mathbf{k}} =V_{j}/\mathfrak{S}_{\mathbf{k}}. 
\]
Using 
inequality \eqref{eqn:MV2}
we get that
\begin{eqnarray*}
  b_{i} (W_{j} /\mathfrak{S}_{\mathbf{k}} ,\F) & \leq & b_{i} ((
  W_{j} \cap W'_j) /\mathfrak{S}_{\mathbf{k}} ,\F) +b_{i} ((W_{j}
  \cup W'_j) /\mathfrak{S}_{\mathbf{k}} ,\F)\\
  & = & b_{i} (V_{j} /\mathfrak{S}_{\mathbf{k}} ,\F) +b_{i}(
  \R' \la \delta_{1} , \ldots , \delta_{j} \ra^{k} /\mathfrak{S}_{\mathbf{k}}
  ,\F)
\end{eqnarray*}
We conclude using Lemma \ref{7:lem:union2}.
\end{proof}

\begin{proof}[Proof of Proposition \ref{7:prop:betti closed}]
Using Part \eqref{itemlabel:7:prop:prop1:2} of Proposition \ref{7:prop:prop1} we get that
\begin{eqnarray*}
  \sum_{\Psi \in \Sigma_{\le s}} b_{i} (\Psi /\mathfrak{S}_{\mathbf{k}}
  ,\F) & \leq & \sum_{j=1}^{k-i}
  \sum_{\substack{
    J \subset \{ 1, \ldots ,s \}\\
    \card (J) =j
 }} b_{i+j-1} (S^{J} /\mathfrak{S}_{\mathbf{k}},\F) + \binom{s}{k-i}
  b_{k} (S^{\emptyset} /\mathfrak{S}_{\mathbf{k}},\F) .
\end{eqnarray*}
It follows from Lemma \ref{7:lem:union1} that,
\begin{eqnarray*}
  b_{i+j-1} (S^{J} /\mathfrak{S}_{\mathbf{k}},\F) & = & 0,
\end{eqnarray*}
when $i+j-1  \geq D' +D''$, and otherwise,
\begin{eqnarray*}
  b_{i+j-1} (S^{J}/\mathfrak{S}_\kk,\F) & \leq & \sum_{\ell =1}^{\min (j,D')} \binom{j}{\ell}
  5^{\ell} F (\mathbf{k},2 d) +b_{k}(\R^{k}
  /\mathfrak{S}_{\mathbf{k}} ,\F) .
\end{eqnarray*}
Hence,
\begin{eqnarray*}
  \sum_{\Psi \in \Sigma_{\le s}} b_{i} (\Psi /\mathfrak{S}_{\kk} ,\F)
  & \leq & \sum_{j=1}^{D' +D'' -i}
  \sum_{\substack{
    J \subset \{ 1, \ldots ,s \}\\
    \card(J) =j
 }}  b_{i+j-1} (S^{J} /\mathfrak{S}_{\mathbf{k}}) +\\
 && \binom{s}{k-i}
  b_{k} (S^{\emptyset} /\mathfrak{S}_{\mathbf{k}})\\
  & \leq & \sum_{j=1}^{D' +D''  -i} \binom{s}{j} \left(\sum_{p=1}^{\min (
  j,D')} \binom{j}{p} 5^{p}  F (\mathbf{k},2d) \right)\\
  & \leq & \sum_{j=1}^{D' +D''  -i} \binom{s}{j} 6^{j} F (\mathbf{k},2d) .
\end{eqnarray*}
Finally, it is clear that
\begin{eqnarray*}
  \sum_{\Psi \in \Sigma_{\le s}} b_{i} (\Psi /\mathfrak{S}_{\mathbf{k}}
  ,\F) & = & 0,
\end{eqnarray*}
for $i  \geq D+D'$.

\end{proof}

\begin{proof}[Proof of Theorem \ref{thm:main-sa-closed}] 
We add an extra polynomial, $\delta(X_1^2+\cdots+X_k^2) -1$ to the set $\mathcal{P}$, replace the field $\R$, by
$\R\la\delta\ra$, and replace the given formula $\mathcal{P}$-closed formula $\Phi$ by the formula
$\Phi \wedge ( \delta(X_1^2+\cdots+X_k^2) -1\leq 0)$. Notice that the new set $\RR(\Phi)$
is bounded in $\R\la\delta\ra^k$ and has isomorphic homology groups as $S$.

We first
consider the case in which for each $i,1 \leq i \leq \omega$, $4d \leq k_{i}$.
In this case,
\begin{eqnarray*}
  D (\mathbf{k},d) & = & D' (\mathbf{k},d) +D'' (\mathbf{k},d) ,
\end{eqnarray*}
and Theorem \ref{thm:main-sa-closed} follows from Propositions
\ref{7:prop:closed-with-parameters} and \ref{7:prop:betti closed}, recalling
that the number of polynomials was doubled in ordered to put the family
$\mathcal{P}$ in $D'$-general position. In the general case, suppose without
loss of generality that $k_{i} \leq 4d$, for $1 \leq i \leq \omega' \leq
\omega$, and $k_{i} >4d  $ for $i> \omega'$. Let, $\mathbf{k}' = (
k_{1} , \ldots ,k_{\omega'})$, $k' = \sum_{i=1}^{\omega'} k_{i}$, and $\bar{\pi}
: \R^{k} /\mathfrak{S}_{\mathbf{k}} \rightarrow \R^{k'}
/\mathfrak{S}_{\mathbf{k}'}$ the map induced by the projection map, $\pi:\R^k\rightarrow \R^{k'}$, to the first $k'$ coordinates.

Then, for each $\bar{\mathbf{y}} \in \bar{\pi} (S /\mathfrak{S}_{\mathbf{k}})$,
we have by applying the special case of Theorem \ref{thm:main-sa-closed}
already proved above that,
\begin{eqnarray*}
  b_{i} ((S \cap \pi^{-1} (\bar{\mathbf{y}})) /\mathfrak{S}_{\mathbf{k}})
  ,\F) & = & 0,
\end{eqnarray*}
for $i \geq \sum_{i= \omega' +1}^{\omega} \left(\min (k_{i} ,4d) + \min (k_{i} ,d
)
\right) 
=5 (\omega - \omega') d$. 

In other words, the the fibers of the map $\bar{\pi}
: \R^{k} /\mathfrak{S}_{\mathbf{k}} \rightarrow \R^{k'}
/\mathfrak{S}_{\mathbf{k}'}$ restricted to $S/\mathfrak{S}_k$ have vanishing homology above 
(and including) dimension $5(\omega-\omega')d$, and clearly 
the image of the map has  dimension $\leq k'$.

It now follows from Leray spectral sequence of the map 
$\bar{\pi}
: \R^{k} /\mathfrak{S}_{\mathbf{k}} \rightarrow \R^{k'}
/\mathfrak{S}_{\mathbf{k}'}$ restricted to $S/\mathfrak{S}_k$ (see for example \cite[Th\'{e}or\`{e}me 5.2.4] {Godement}),
 that
\begin{eqnarray*}
  b_{i} (S /\mathfrak{S}_{\mathbf{k}}
  ,\F) & = & 0,
\end{eqnarray*}
for $i  \geq k' +5 (\omega - \omega') d=D (\mathbf{k},d)$.

A similar argument proves that in Proposition \ref{7:prop:betti closed} we
can replace $D' +D''$ by $D$ as well.

This proves the theorem in general.\end{proof}

\subsection{Proof of Theorem \ref{thm:main-sa}}

In {\cite{GV07}}, Gabrielov and Vorobjov introduced a construction for
replacing an arbitrary $\mathcal{P}$-semi-algebraic set $S$ by a certain
$\mathcal{P}'_{p}$-closed semi-algebraic set $S_{p}'$ (for any given $p \geq 0$), 
such that $S$ and $S_{p}'$ are $p$-equivalent. The family
$\mathcal{P}'_{p}$ in their construction is given by
\begin{eqnarray*}
  \mathcal{P}_{p}' & = & \bigcup_{P \in \mathcal{P}} \bigcup_{0 \leq i \leq p}
  \left\{ P \pm \eps_{i} ,P \pm \delta_{i} \right\} ,
\end{eqnarray*}
where the $\eps_{i} , \delta_{i}$ are infinitesimals.

Note that  $\mathcal{P} \subset \R [ \X^{(1)} ,\ldots ,\X^{(\omega)} ]^{\mathfrak{S}_\kk}$ implies that
$\mathcal{P}'_{p} \subset\R [ \X^{(1)} ,\ldots ,\X^{(\omega)} ]^{\mathfrak{S}_\kk}$ as well,
and if the degrees of the polynomials in $\mathcal{P}$ are
bounded by $d$, the same bound applies to polynomials in $\mathcal{P}'_{p}$ as well.
Furthermore, $\card (\mathcal{P}'_{p})  = 4 (p+1
) \card (\mathcal{P})$. It is an immediate
consequence of the above result that $S/\mathfrak{S}_{\mathbf{k}}$ is
$p$-equivalent to $S' /\mathfrak{S}_{\mathbf{k}}$ as well.

\begin{proof}[Proof of Theorem \ref{thm:main-sa}] Using the above
construction, replace $S$ by $S_{p}'$, with $p = k$. Then, apply Theorem
\ref{thm:main-sa-closed}.\end{proof}

\subsection{Proof of Theorem \ref{thm:descent2-quantitative}
}
We now prove Theorem
\ref{thm:descent2-quantitative} 
closely
following the proof of Theorem \ref{thm:descent-quantitative} in {\cite{GVZ04}}. We first
need a few preliminary definitions and notation.

For the rest of this section we fix $X$ to be a compact semi-algebraic subset of $\R$.
\begin{notation}[Standard simplex]
  We will denote by $\Delta_{p}$, the standard $p$-dimensional simplex,
  namely
  \begin{eqnarray*}
    \Delta_{p} & = & \{ (s_{0} , \ldots ,s_{p}) | s_{0} , \ldots ,s_{p} \geq
    0,s_{0} + \cdots +s_{p} =1 \} .
  \end{eqnarray*}
\end{notation}

\begin{notation}[Symmetric product]
\label{not:symmetric-product}
  We denote for each $p \geq 0$, $ \Sym^{(p)} (X
 )$ the $(p+1)$-fold symmetric product of $X$ i.e.
  \begin{eqnarray*}
     \Sym^{(p)} (X) & = & \underbrace{X \times
    \cdots \times X}_{p+1} /\mathfrak{S}_{p+1}. 
\end{eqnarray*}
  Let $\W^{(p)}  = \{\x = (x_0,\ldots,x_p) \in \R^{p+1} \mid x_0 \leq x_1 \leq \cdots \leq x_{p} \}$
  Then,   $\Sym^{(p)}(X)$ is homeomorphic to $X^{p+1} \cap \W^{(p)}$, 
  and we will identify $\Sym^{(p)}(X)$ with the set $X^{p+1} \cap \W^{(p)}$.
\end{notation}

\begin{definition}[Symmetric join]
We next define $J^{(p)}_\symm(X)$ as follows. 
\begin{eqnarray*}
    J^{(p)}_\symm (X) & = & 
    \Sym^{(p)}(X)
    \times \Delta_{p} / \sim,
  \end{eqnarray*}
where the equivalence relation $\sim$ is given by 
(after identifying $\Sym^{(p)}(X)$ with $X^{p+1} \cap \W^{(p)}$  
cf. Notation \ref{not:symmetric-product})
\[((x_{0} ,
  \ldots ,x_{p}) , (s_{0} , \ldots ,s_{p}))\sim ((x_{0}'
  , \ldots ,x_{p}'), (s_{0}' , \ldots ,s_{p}'))
  \] 
  if and only if $(
  s_{0} , \ldots ,s_{p}) = (s_{0}' , \ldots ,s_{p}') $, and $x_{i}
  =x_{i}'$ for all $i$ such that $s_{i} =s_{i}' \neq 0$.
  \end{definition}

For each $p >0, 0 \leq i \leq p$,
there is an injection
\[
\phi^{(p,i)} :J^{(p-1)}_{\symm} (X) \rightarrow J^{(p)}_{\symm} (X)
\]
defined by
$$\displaylines{
 \phi^{(p,i)} (((x_{0} , \ldots ,x_{p-1}) , (
  s_{0} , \ldots ,s_{p-1}))) = \cr
 ((x_{0} , \ldots, x_{i},x_{i},x_{i+1}, \ldots,
  ,x_{p-1}), (s_{0} , \ldots, s_{i-1},0,s_{i+1},\ldots,s_{p})).
}
$$

Let $J_\symm(X)$ be the disjoint union of the $J_\symm^{(p)}(X)$ with for each $p\geq 0$, the images of $\phi^{(p,i)}, 0 \leq i \leq p$, identified. Let 
\[
\phi^{(p)}: J_\symm^{(p-1)}(X) \rightarrow J_\symm(X)
\] 
be the maps induced by the $\phi^{(p,i)}$.

\begin{lemma}
  \label{lem:contractible} The image $\phi^{(p,i)} (J^{(p-1
 )}_{\symm} (X))$ is contractible inside 
 $J^{(p)}_{\symm} (X))$.
\end{lemma}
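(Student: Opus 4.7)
The plan is to produce an explicit (semi-algebraic) contraction of $\phi^{(p,0)}(J^{(p-1)}_{\operatorname{symm}}(X))$ to a point inside $J^{(p)}_{\operatorname{symm}}(X)$. Since in the symmetric quotient the images of all the face inclusions $\phi^{(p,i)}$, $0 \le i \le p$, coincide (they are all the image of the locus ``some $s_i=0$''), it suffices to treat $\phi^{(p,0)}$. Fix a base-point $\mathbf{x}_\ast \in X$. The idea is to slide a point $\overline{(\mathbf{x}_0,\ldots,\mathbf{x}_{p-1},\mathbf{x}_\ast,(s_0,\ldots,s_{p-1},0))}$ along a straight line in the simplex $\Delta_p$ toward the vertex $(0,\ldots,0,1)$, while keeping the $X$-coordinates fixed.

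Concretely, I would define
\[
H:\phi^{(p,0)}\bigl(J^{(p-1)}_{\operatorname{symm}}(X)\bigr)\times [0,1]\longrightarrow J^{(p)}_{\operatorname{symm}}(X)
\]
by
\[
H\bigl(\overline{(\mathbf{x}_0,\ldots,\mathbf{x}_{p-1},\mathbf{x}_\ast,(s_0,\ldots,s_{p-1},0))},t\bigr) \;=\; \overline{(\mathbf{x}_0,\ldots,\mathbf{x}_{p-1},\mathbf{x}_\ast,\bigl((1-t)s_0,\ldots,(1-t)s_{p-1},t\bigr))}.
\]
At $t=0$ this is the identity on the image, and at $t=1$ every point is sent to $\overline{(\mathbf{x}_0,\ldots,\mathbf{x}_{p-1},\mathbf{x}_\ast,(0,\ldots,0,1))}$. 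By the equivalence relation defining $J^{(p)}(X)$, any point whose only nonzero simplicial coordinate is $s_p=1$ depends only on $\mathbf{x}_\ast$, so $H(\,\cdot\,,1)$ is a constant map to a single point of $J^{(p)}_{\operatorname{symm}}(X)$.

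The verifications are routine but should be stated explicitly: (i) \emph{well-definedness}, namely that $H$ descends both through the join equivalence relation on $J^{(p)}(X)$ and through the $\mathfrak{S}_{p+1}$-action. For the join relation, if two representatives of the input agree on all indices $j$ with $s_j \ne 0$, then after scaling by $(1-t)$ they still agree on all indices with nonzero simplicial coordinate, and the added last coordinate $\mathbf{x}_\ast$ is common to both. Equivariance is automatic because we perform the same rescaling on every coordinate. (ii) \emph{Continuity} (in fact semi-algebraicity) is clear since the map is polynomial in $t$ and the $s_j$'s.

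The main obstacle — really the only subtle point — is checking that $H$ is well-defined at $t=1$, where several $s_j$ vanish simultaneously and one needs the equivalence relation in $J^{(p)}(X)$ to collapse all the ``garbage'' $X$-coordinates to yield a single point. This is exactly where the definition of the join via the relation ``$\mathbf{x}_i=\mathbf{x}_i'$ whenever $s_i=s_i'\ne 0$'' is used in an essential way, and it is the reason the contraction lives inside $J^{(p)}_{\operatorname{symm}}(X)$ rather than in the unreduced product.
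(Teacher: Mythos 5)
Your homotopy is exactly the contraction used in the paper (the paper's $g_t$ is your $H$ with $t$ replaced by $1-t$), and your extra checks of well-definedness through the join relation and the $\mathfrak{S}_{p+1}$-action are correct, so the proof is fine and takes essentially the same approach. One aside is inaccurate but harmless: the images of $\phi^{(p,i)}$ for $i>0$ (which duplicate a coordinate and its weight) do not coincide with the zero-weight face $\phi^{(p,0)}(J^{(p-1)}_{\operatorname{symm}}(X))$, but the lemma only concerns $\phi^{(p)}=\phi^{(p,0)}$, which is what you treat.
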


\begin{proof}
Without loss of generality, let $i=0$, and let $y = \min X$.
For each $t  \in [ 0,1 ]$, we define a map
$g_t: \phi^{(p,0)}(J^{(p-1)}_\symm(X)) \rightarrow J^{(p)}_\symm(X)$ as follows.
Let 
$$\displaylines{
((x_{0} , x_{0}, \ldots ,x_{p-1}), (0,s_{1} , \ldots ,s_{p})) = \cr
\phi^{(p,0)}((x_{0}, \ldots ,x_{p-1}), (s_{1} , \ldots ,s_{p})) \in
\phi^{(p,0)}(J^{(p-1)}_\symm(X)).
}
$$

  We define  
$$\displaylines{
  g_{t} ((x_{0} , x_{0}, \ldots ,x_{p-1}), (0,s_{1} , \ldots ,s_{p})))  = \cr
   ((y, x_{0},\ldots,x_{p-1}), (t,  (1-t)s_1\ldots,(1-t)s_{p})).
 }
 $$
Observe that, $g_{t}$ is a continuous family of maps, satisfying
\begin{eqnarray*}
g_{0} &=&
\tmop{Id}_{\phi^{(p,0)} (J^{(p-1)}_{ \symm} (X))}, \\
g_{1} (\phi^{(p,0)} (J^{(p-1)}_{ \symm} (X))) &=& ((y,\ldots,y),(1,0,\ldots,0)),
\end{eqnarray*}
proving the lemma.
\end{proof}

It follows immediately from Lemma \ref{lem:contractible} that

\begin{lemma}
  \label{lem:contractible-infinite-join}
  $J_{\symm} (X)$ is contractible.
\end{lemma}

Now suppose that $S \subset \R^{m+1}$ is a compact semi-algebraic set, and $\pi:S \rightarrow T = \pi(S)$ is the projection on the first $m$ coordinates restricted to $S$.

\begin{notation}
\label{not:fibered-symmetric-join}
We denote for each $p \geq 0$, $ \Sym^{(p)}_\pi(S)$ the $(p+1)$-fold symmetric product of $S$ fibered over $\pi$ i.e.
  \begin{eqnarray*}
     \Sym^{(p)}_\pi (S) & = & \underbrace{S \times_\pi
    \cdots \times_\pi S}_{p+1} /\mathfrak{S}_{p+1}. 
   \end{eqnarray*} 
   
   As before  we identify  $ \Sym^{(p)}_\pi(S)$ with the set 
   \[\W_\pi^{(p)} = \{(\y,x_0,\ldots,x_p) \mid (\y,x_i) \in S, 0\leq i \leq p, (x_0,\ldots,x_p) \in \W^{(p)}\}.
   \]
\end{notation}

\begin{definition}[Fibered symmetric join]
For each $p \geq 0$, we denote by $J^{(p)}_{\pi,\symm} (S)$, the $(p+1)$-fold fibered 
  symmetric join as the set defined by 
  \begin{eqnarray*}
    J^{(p)}_{\pi,\symm} (S) & = & 
    \Sym_\pi^{(p)}(S)
    \times \Delta_{p} / \sim,
  \end{eqnarray*}
where the equivalence relation $\sim$ is given by 
\[
((\y,x_{0} ,
  \ldots ,x_{p}) , (s_{0} , \ldots ,s_{p}))\sim ((\y,x_{0}',
  , \ldots ,x_{p}'), (s_{0}' , \ldots ,s_{p}'))
  \]
 if and only if $\y = \y', (s_{0} , \ldots ,s_{p}) = (s_{0}' , \ldots ,s_{p}')$, and $x_{i}
  =x_{i}'$ for all $i$ such that $s_{i} =s_{i}' \neq 0$.

For each $p >0, 0 \leq i \leq p$,
there is an injection
\[
\phi^{(p,i)}_\pi :J^{(p-1)}_{\pi,\symm} (S) \rightarrow J^{(p)}_{\pi,\symm} (S)
\]
defined by
$$\displaylines{
 \phi^{(p,i)}_\pi (((\y,x_{0}, \ldots ,x_{p-1}) , (
  s_{0} , \ldots ,s_{p-1}))) = \cr
 ((\y,x_{0} , \ldots,x_{i},x_{i},x_{i+1}, \ldots,
  ,x_{p-1}), 
   (s_{0} , \ldots, s_{i-1},0,s_{i+1},\ldots,s_{p})).
}
$$

Let $J_{\pi,\symm}(S)$ be the disjoint union of the $J_{\pi,\symm}^{(p)}(S)$ with for each $p> 0$, the images of $\phi^{(p,i)}_\pi, 0 \leq i \leq p $ identified. Let 
\[
\phi^{(p)}_\pi: J_{\pi,\symm}^{(p-1)}(S) \rightarrow J_{\pi,\symm}(S)
\] 
be the inclusion maps induced by the $\phi^{(p,i)}_\pi$.
\end{definition}

\begin{proposition}
  \label{prop:he}
  The induced surjection $J (\pi)
  :J_{\pi,\symm} (S) \twoheadrightarrow T$ is a
  homotopy equivalence.
\end{proposition}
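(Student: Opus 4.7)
The plan is to exhibit $J(f)$ as a map with contractible fibers and then pass from this fiberwise statement to a global homotopy equivalence via Hardt's semi-algebraic triviality theorem. First I would verify that $J(f)$ is well-defined and identify its fibers: for any class $\overline{(\mathbf{x}_0,\ldots,\mathbf{x}_p,(s_0,\ldots,s_p))}$ in $J^{(p)}_{f,\operatorname{symm}}(X)$, the common value $f(\mathbf{x}_i)=y$ for indices $i$ with $s_i>0$ depends only on the equivalence class, and the fiber of $J(f)$ over any $y\in Y$ is precisely $J_{\operatorname{symm}}(f^{-1}(y))$. Since $f$ is surjective and $X$ is compact semi-algebraic, each $f^{-1}(y)$ is a non-empty compact semi-algebraic set, so Lemma \ref{lem:contractible-infinite-join} gives that every fiber of $J(f)$ is contractible.

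Next I would apply Hardt's semi-algebraic triviality theorem to obtain a finite semi-algebraic partition $Y=\bigsqcup_i Y_i$ over which $f$ is trivial, so that $f^{-1}(Y_i)\cong Y_i\times F_i$ compatibly with the projection to $Y_i$. This triviality lifts functorially to each fibered join, giving $J^{(p)}_{f,\operatorname{symm}}(X)|_{Y_i}\cong Y_i\times J^{(p)}_{\operatorname{symm}}(F_i)$, and passing to the direct limit yields $J_{f,\operatorname{symm}}(X)|_{Y_i}\cong Y_i\times J_{\operatorname{symm}}(F_i)$. Contractibility of $J_{\operatorname{symm}}(F_i)$ then makes each restriction $J(f)|_{Y_i}$ a homotopy equivalence. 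To globalize, I would stratify $Y$ by dimension and induct over strata, gluing local homotopy equivalences across boundaries of strata using that semi-algebraic pairs admit triangulations making the inclusions cofibrations; alternatively one invokes Dold's theorem on numerable covers after choosing open thickenings of the triviality cover. This yields the desired homotopy equivalence $J(f): J_{f,\operatorname{symm}}(X)\to Y$.

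The main technical obstacle is handling the infinite direct limit $J_{f,\operatorname{symm}}(X)=\varinjlim J^{(p)}_{f,\operatorname{symm}}(X)$, which is itself not a semi-algebraic set and therefore escapes the standard framework. One must verify that the triviality produced by Hardt extends coherently across all $p$, that it commutes with both the $\mathfrak{S}_{p+1}$-action and the direct limit, and that contractibility passes to the colimit. The last point requires that the transition maps $\phi^{(p,i)}$ underlying the direct system are closed cofibrations, which can be arranged by choosing a compatible CW structure after triangulation. Once these coherence issues are under control, each remaining step reduces to a standard manipulation.
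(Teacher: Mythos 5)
Your first step coincides with the paper's: you identify $J(f)^{-1}(y)=J_{\operatorname{symm}}(f^{-1}(y))$ and invoke Lemma \ref{lem:contractible-infinite-join} to get contractibility of every fiber. Where the paper then concludes in one line by applying the Vietoris--Begle theorem to the closed surjection $J(f)$ with contractible point-inverses, you attempt a different globalization: Hardt triviality over a finite semi-algebraic partition of $Y$, triviality of the fibered symmetric join over each piece, and then a local-to-global argument via stratified gluing or Dold's theorem. It is precisely this globalization step that contains a genuine gap.

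Hardt's theorem produces a partition of $Y$ into locally closed semi-algebraic pieces, not an open cover, and the trivializations are only over those pieces. Dold's theorem needs a numerable \emph{open} cover over which the map is fiber-homotopy trivial (or at least that the maps involved are Dold fibrations); over an ``open thickening'' of a Hardt piece no triviality, and not even a coherent fiberwise equivalence, is known, so the appeal to Dold does not apply as stated. The fallback of inducting over strata is also not justified by what you have: a map that restricts to a homotopy equivalence over every member of a partition need not be a global homotopy equivalence. For instance, the obvious map from $\{0\}\sqcup(0,1]$ (disjoint union topology) onto $[0,1]$ is a homotopy equivalence over each of the two pieces $\{0\}$ and $(0,1]$, yet the source is homotopy equivalent to two points and the target to one. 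To glue you would need additional structure \emph{upstairs}, e.g.\ that the preimage under $J(f)$ of a suitable neighborhood of each stratum deformation retracts onto the preimage of the stratum compatibly with a retraction in the base (a mapping-cylinder-neighborhood or quasi-fibration statement for $J(f)$ near strata boundaries); nothing of this sort is established, and establishing it is essentially the content of the proposition. The coherence points you do flag (compatibility with the $\mathfrak{S}_{p+1}$-action, with the direct limit, and cofibrancy of the maps $\phi^{(p,i)}$) are manageable, but they concern the two spaces separately; the missing ingredient is the behavior of the map $J(f)$ across strata boundaries. The paper's route via Vietoris--Begle avoids this issue entirely, which is why its proof is short.
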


\begin{proof} For each $\y \in T$, $J (\pi)^{-1} (\y)  =
J_{\symm} (\pi^{-1} (\y))$. By Lemma
\ref{lem:contractible-infinite-join}, 
\[
J_{ \symm} (\pi^{-1} (\y))
\]
is
contractible. The proposition now follows from the Vietoris-Begle
theorem.\end{proof}

\begin{lemma}
  \label{lem:quotient}The pair $(J^{(p)}_{\pi
  ,\symm} (S) , \phi^{(p)} (J^{(p-1
 )}_{\pi,\symm} (S)))$ is homotopy equivalent to
  the pair $(\mathbf{S}^{p} \times  \Sym^{(p
 )}_{\pi} (S) , \{ \ast \} \times  \Sym^{(p)}_{\pi}
  (S))$,
  where $\mathbf{S}^p$ denotes the $p$-dimensional sphere.
\end{lemma}

\begin{proof}Clear from the definition of $J^{(p)}_{\pi
,  \symm} (S)$, and the inclusion map $\phi^{(p)}$.
\end{proof}

\begin{theorem}
  \label{thm:symmetric-spectral-sequence}For any field of coefficients
  $\F$, there exists a spectral sequence converging to $\HH_{\ast} (
  T,\F)$ whose $E^{1}$-term is given by
  \begin{eqnarray}
    E^{1}_{p,q} & \simeq & \HH_{q} ( \Sym^{(p)}_{\pi}
    (S) ,\F) .  \label{eqn:E1}
  \end{eqnarray}
\end{theorem}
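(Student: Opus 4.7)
The plan is to construct the spectral sequence from the exhaustive filtration
\[
 J^{(0)}_{f,\tmop{symm}}(X) \subset J^{(1)}_{f,\tmop{symm}}(X) \subset \cdots \subset J_{f,\tmop{symm}}(X)
\]
furnished by the maps $\phi^{(p, \infty)}$, and then identify its $E_1$-page via Lemma \ref{lem:quotient} and pass to the limit using Proposition \ref{prop:he}. First I would verify that each inclusion $\phi^{(p)}: J^{(p-1)}_{f,\tmop{symm}}(X) \hookrightarrow J^{(p)}_{f,\tmop{symm}}(X)$ is a cofibration (this is straightforward since the construction is semi-algebraic and triangulable, so the inclusion of the ``boundary stratum'' into the join is a closed semi-algebraic subcomplex). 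This gives the standard filtration spectral sequence associated to a filtered space, with
\[
 E_1^{p,q} \;=\; H_{p+q}\bigl(J^{(p)}_{f,\tmop{symm}}(X),\, \phi^{(p)}(J^{(p-1)}_{f,\tmop{symm}}(X)),\, \F\bigr).
\]

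Next I would compute this relative homology using Lemma \ref{lem:quotient}, which identifies the pair above (up to homotopy equivalence) with the product pair $(\mathbf{S}^p \times \tmop{Sym}^{(p)}_f(X),\, \{\ast\} \times \tmop{Sym}^{(p)}_f(X))$. Since the inclusion $\{\ast\} \hookrightarrow \mathbf{S}^p$ is a cofibration, the quotient of this pair is homeomorphic to $\mathbf{S}^p \wedge (\tmop{Sym}^{(p)}_f(X))_+$, i.e.\ the $p$-fold reduced suspension of $(\tmop{Sym}^{(p)}_f(X))_+$. By the suspension isomorphism (equivalently by a direct application of Künneth, using that $H_*(\mathbf{S}^p,\{\ast\})$ is $\F$ in degree $p$ and zero otherwise), one obtains
\[
 E_1^{p,q} \;\cong\; H_{p+q}\bigl(\mathbf{S}^p \times \tmop{Sym}^{(p)}_f(X),\, \{\ast\}\times \tmop{Sym}^{(p)}_f(X);\F\bigr) \;\cong\; H_q\bigl(\tmop{Sym}^{(p)}_f(X),\F\bigr),
\]
which is the desired identification \eqref{eqn:E1}.

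Finally, for convergence I would argue that the filtration is an exhaustion by closed subspaces, so the associated filtration spectral sequence converges to $H_*(J_{f,\tmop{symm}}(X),\F)$. By Proposition \ref{prop:he}, the projection $J(f): J_{f,\tmop{symm}}(X) \twoheadrightarrow Y$ is a homotopy equivalence, so $H_*(J_{f,\tmop{symm}}(X),\F) \cong H_*(Y,\F)$, and convergence of the spectral sequence to $H_*(Y,\F)$ follows. The main obstacle I anticipate is justifying convergence cleanly in the semi-algebraic setting: one must ensure that the direct limit topology on $J_{f,\tmop{symm}}(X)$ behaves well with respect to homology (i.e.\ $H_*(J_{f,\tmop{symm}}(X)) \cong \varinjlim H_*(J^{(p)}_{f,\tmop{symm}}(X))$) and that the filtration spectral sequence is strongly convergent. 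This is handled by the fact that the filtration consists of closed CW-like subspaces with cofibration inclusions, so that standard results (e.g.\ \cite[Section~VII.2]{Brown-book}-style arguments) apply. A subsidiary technical point is that although each $J^{(p)}_{f,\tmop{symm}}(X)$ is compact semi-algebraic, the direct limit is not; this is harmless because $J(f)$ restricted to each $J^{(p)}_{f,\tmop{symm}}(X)$ is already proper onto its image, so Vietoris--Begle (as invoked in Proposition \ref{prop:he}) yields the required homotopy equivalence upon passage to the limit.
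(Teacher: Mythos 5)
Your proposal is correct and is essentially the paper's own argument: the spectral sequence of the filtration of $J_{f,\tmop{symm}}(X)$ by the images of the maps $\phi^{(p,\infty)}$, with the $E_1$-term identified via Lemma \ref{lem:quotient} (plus the suspension/K\"unneth isomorphism) and convergence to $H_*(Y,\F)$ via the homotopy equivalence of Proposition \ref{prop:he}. The additional care you take with cofibrations and the passage to the direct limit only makes explicit points the paper leaves implicit.
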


\begin{proof}The spectral sequence is the spectral sequence of
the filtration (see, for example, \cite[\S 4]{Godement})
\[ \tmop{Im} (\phi^{(0)}) \subset \tmop{Im} (\phi^{(1
  )}) \subset \cdots \subset J_{\pi,  \symm} (S) \sim T \]
where the last homotopy equivalence is a consequence of Proposition
\ref{prop:he}. The isomorphism in \eqref{eqn:E1}  is a consequence of Lemma
\ref{lem:quotient} after noticing that
\begin{eqnarray*}
  \HH_{q} ( \Sym^{(p)}_{\pi} (S) ,\F) & \simeq & \HH_{q-p} (
 \mathbf{S}^{p} \times  \Sym^{(p)}_\pi (S) , \{ \ast \} \times
   \Sym^{(p)}_{\pi} (S) ,\F)   ,q \geq p,\\
 \HH_{q} ( \Sym^{(p)}_{\pi} (S) ,\F) & \simeq & 0,q<p.
\end{eqnarray*}

\end{proof}

\begin{remark}
\label{rem:symmetric-spectral-sequence}
Similar spectral sequences for finite maps have been considered by several other authors 
(see for example \cite{Houston2,Goryunov-Mond}). 
The $E^1$-term of these spectral sequences involve the \emph{alternating
cohomology} of the fibered product, rather than the ordinary homology of 
the symmetric product as in Theorem \ref{thm:symmetric-spectral-sequence}. This distinction 
is important for us, as
we can apply our bounds on the equivariant Betti numbers of symmetric semi-algebraic sets
to bound the dimensions of the latter groups, but not those of the former.
\end{remark}

\begin{corollary}
  \label{cor:descent2} With the above notation and for any field of
  coefficients $\F$
  \begin{eqnarray*}
    b (\pi (V) ,\F) & \leq & \sum_{0 \leq p<m} b (
     \Sym^{(p)}_{\pi} (V) ,\F) .
  \end{eqnarray*}
\end{corollary}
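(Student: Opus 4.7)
The plan is to apply Theorem \ref{thm:symmetric-spectral-sequence} to the restricted projection $f=\pi|_{V}:V\twoheadrightarrow \pi(V)$. Since $V$ is closed and bounded in $\R^{m+1}$ it is compact, so $\pi|_{V}$ is a proper continuous surjection onto the compact semi-algebraic set $\pi(V)$. Theorem \ref{thm:symmetric-spectral-sequence} then furnishes a spectral sequence converging to $H_{\ast}(\pi(V),\F)$ with
\[
E_1^{p,q} \;\cong\; H_q\bigl(\tmop{Sym}^{(p)}_{\pi}(V),\F\bigr).
\]

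First, I would use the standard inequality $\dim E_\infty^{p,q}\le \dim E_1^{p,q}$ together with the identification $\dim H_n(\pi(V),\F)=\sum_{p+q=n}\dim E_\infty^{p,q}$ (as the sum of dimensions of the graded pieces of the induced filtration on $H_n$). Summing over $n$ gives the coarse bound
\[
b(\pi(V),\F) \;=\; \sum_{n\ge 0}\dim H_n(\pi(V),\F) \;\le\; \sum_{p,q\ge 0}\dim E_1^{p,q} \;=\; \sum_{p\ge 0} b\bigl(\tmop{Sym}^{(p)}_{\pi}(V),\F\bigr).
\]

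Second, I would restrict the range of $p$ via a dimension argument. The set $\pi(V)$ is a compact semi-algebraic subset of $\R^m$, so by Alexander duality in $S^m$ combined with the local contractibility of semi-algebraic sets, one has $H_n(\pi(V),\F)=0$ for all $n\ge m$ (the case $n=m$ following from $\tilde H_{-1}(S^m\setminus \pi(V);\F)=0$). Hence $b(\pi(V),\F)=\sum_{n=0}^{m-1}\dim H_n(\pi(V),\F)$, and only those $E_\infty^{p,q}$ with $p+q\le m-1$ contribute. Rearranging the double sum then yields
\[
b(\pi(V),\F)\;\le\;\sum_{p=0}^{m-1}\sum_{q=0}^{m-1-p}\dim E_1^{p,q}\;\le\;\sum_{0\le p<m} b\bigl(\tmop{Sym}^{(p)}_{\pi}(V),\F\bigr),
\]
which is the claimed inequality.

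The main technical point to verify carefully is the vanishing $H_m(\pi(V),\F)=0$ in the semi-algebraic setting over an arbitrary real closed field $\R$. Over $\mathbb R$ this is the standard Alexander-duality argument for compact subsets of Euclidean space. For general $\R$ one can either carry out the argument on a semi-algebraic triangulation of $\pi(V)$ (any $m$-cycle in a simplicial complex embeddable in $\R^m$ must be a boundary), or invoke Tarski--Seidenberg transfer, since the inequality asserted is a first-order statement in the coefficients of $P$. Everything else in the argument is a purely formal manipulation of the spectral sequence produced by Theorem \ref{thm:symmetric-spectral-sequence}.
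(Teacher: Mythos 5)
Your proof is correct and follows exactly the route the paper intends: Corollary \ref{cor:descent2} is treated as an immediate consequence of the spectral sequence of Theorem \ref{thm:symmetric-spectral-sequence} (as in \cite{GVZ04}), with the restriction to $0\le p<m$ resting on the vanishing $H_{n}(\pi(V),\F)=0$ for $n\ge m$, which you justify correctly via Alexander duality/compactness — the one step the paper leaves implicit. The only cosmetic caveat is your Tarski--Seidenberg remark (Betti-number assertions are not literally first-order), but the simplicial/triangulation argument you also give handles arbitrary real closed fields, so nothing essential is missing.
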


\begin{proof}[Proof of Theorem \ref{thm:descent2-quantitative}] 
First observe that  
\[
\Sym^{(p)}_\pi(V) = \ZZ(Q^{(p)},\R^{(p+1)+m})/\mathfrak{S}_{\kk_{m,p}},
\]
where
\[
Q^{(p)} = \sum_{0 \leq i \leq p} P(\Y,X_{i}),
\]
and 
\[
\kk_{m,p} = {(\underbrace{1,\ldots,1}_{m},p+1)}.
\]
Note that $Q^{(p)}$ is symmetric in $\X^{(p)} = (X_0,\ldots,X_p)$, and thus 
\[
Q^{(p)} \in \R[\Y,\X^{(p)}]^{\mathfrak{S}_{\kk_{m,p}}}.
\]
Moreover, $Q^{(p)}$ is non-negative (since $P$ is non-negative), and $\deg(Q^{(p)}) = \deg(P) \leq d$.
Now apply
Corollary \ref{cor:descent2}  and
Corollary \ref{cor:main}.
\end{proof}

\section{Conclusions and Open Problems}\label{sec:conclusion}
In this paper we have proved asymptotically tight upper bounds on the
equivariant Betti numbers of symmetric real semi-algebraic sets. These bounds
are exponential in the degrees of the defining polynomials, and also in the
number of non-symmetric variables, but polynomial in the remaining parameters
(unlike bounds in the non-equivariant case which are exponential in the number
of variables). We list below several open questions and topics for future
research.

It would be interesting to extend the results in the current paper to
multi-symmetric semi-algebraic sets, where the symmetric group acts by
permuting blocks of variables with block sizes $>1$. As an immediate
application we will obtain extension of Theorem
\ref{thm:descent2-quantitative} 
to the
case where the projection is along more variables than one.

An interesting problem is to prove that the vanishing of the equivariant
cohomology groups in Theorems \ref{thm:main} occurs for dimension $\geq d$
(rather than $2d$).

Another direction (which has already being mentioned in Remark \ref{rem:multiplicities}) is to extend
the polynomial bounds obtained in this paper to multiplicities of other non-trivial irreducible representations of
$\mathfrak{S}_{\mathbf{k}}$ in the cohomology groups of symmetric real varieties or semi-algebraic sets
(viewed as an  $\mathfrak{S}_{\mathbf{k}}$-module), and to characterize those that could occur with positive multiplicities.
We address these questions in a subsequent paper.

In \cite{Church-et-al} the authors define a certain algebraic structure called $\mathrm{FI}$-modules.
For a finitely generated $\mathrm{FI}$-module $V$ over a field $\mathbb{F}$ of char $0$, for each $n\in \mathbb{Z}_{>0}$
there exists an $\mathbb{F}$-vector space $V_n$, the authors prove that the
dimension of $V_n$ is a polynomial in $n$ for all sufficiently large $n$ 
(see \cite{Church-et-al} for the necessary definitions). Amongst the primary examples of $\mathrm{FI}$-modules are certain sequences of 
$\mathfrak{S}_n$-representations, and as a consequence of the above result their dimensions can be expressed as a polynomial
in $n$. Our polynomial bounds on the $\mathfrak{S}_n$-equivariant Betti numbers of 
sequences of symmetric semi-algebraic sets 
(for example, consider the sequence of  real algebraic varieties defined by the sequence 
elementary symmetric polynomials $\left(e^{(n)}_d\right)_{n>0}$ of degree $d$ for some fixed $d$) 
suggest a connection with the theory of $\mathrm{FI}$-modules. It would be interesting to explore 
this possible connection.  

As mentioned in the Introduction, bounds on the 
ordinary (not equivariant)
Betti numbers
of semi-algebraic sets have found applications in theoretical computer
science, for instance in proving lower bounds for testing membership in
semi-algebraic sets in models such as algebraic computation trees. In this
context it would be interesting to investigate if the equivariant Betti
numbers can be used instead -- for example in proving lower bounds for
membership testing in symmetric semi-algebraic sets in an algebraic decision
tree model where the decision tree is restricted to use only symmetric
polynomials.

Finally, we have left open the problem of designing efficient (i.e.
polynomial time for fixed degree) algorithms for computing the individual
Betti numbers of symmetric varieties. In particular, we conjecture that for
every fixed $d$, there exists a polynomial time algorithm for computing the
individual Betti numbers (both ordinary and equivariant) of any symmetric
variety described by a real symmetric polynomial given as input.

\section*{Acknowledgments}{The authors thank A. Gabrielov for suggesting the
use of our equivariant bounds in the non-equivariant application described in
the paper. 
The authors gratefully acknowledge several comments and corrections from two anonymous referees
that greatly helped to  improve the paper.}

\bibliographystyle{abbrv}
\bibliography{master}

\end{document}